\newtheorem{remark}[theorem]{Remark}
\newcommand{\ta}{\tilde{\alpha}}
\newcommand{\om}{\omega}
\newcommand{\Om}{\Omega}
\newcommand{\pa}{\partial}
\renewcommand{\i}{{\rm\mathbf i}}
\newcommand{\bfi}{{\rm\mathbf i}}
\DeclareMathOperator{\re}{{Re}}
\DeclareMathOperator{\im}{{Im}}
\newcommand{\ddiv}{\mbox{\rm div\,}}
\newcommand{\bV}{\mathbf{V}}
\newcommand{\bH}{\mathbf{H}}
\newcommand{\bff}{\mathbf{f}}
\newcommand{\bfn}{\mathbf{n}}
\newcommand{\bftu}{\tilde{\mathbf{u}}}
\newcommand{\bfu}{\mathbf{u}}
\newcommand{\bfv}{\mathbf{v}}
\newcommand{\bfw}{\mathbf{w}}
\newcommand{\bfx}{\mathbf{x}}
\newcommand{\bfF}{\mathbf{F}}
\newcommand{\bfS}{\mathbf{S}}
\newcommand{\bfU}{\mathbf{U}}
\newcommand{\bfnu}{\boldsymbol{\nu}}
\newcommand{\bfpsi}{\boldsymbol{\psi}}
\newcommand{\bfPsi}{\boldsymbol{\Psi}}
\newcommand{\bftPsi}{\widetilde{\bfPsi}}
\newcommand{\bfphi}{\boldsymbol{\phi}}
\newcommand{\bfPhi}{\boldsymbol{\Phi}}
\newcommand{\bnabla}{\boldsymbol{\nabla}}
\newcommand{\bsnabla}{\boldsymbol{\nabla}_{s}}
\newcommand{\bdiv}{\mbox{\rm \bf div\,}}
\newcommand{\ome}{\omega}
\newcommand{\Ome}{\Omega}
\newcommand{\bL}{\mathbf{L}}
\newcommand{\Langle}{\bigl\langle}
\newcommand{\Rangle}{\bigr\rangle}
\newcommand{\norm}[1]{\left\Vert#1\right\Vert}
\newcommand{\E}{\mathbb{E}}
\newcommand{\cE}{\mathcal{E}}
\newcommand{\cS}{\mathcal{S}}
\newcommand{\cT}{\mathcal{T}}
\newcommand{\veps}{\varepsilon}
\newcommand{\p}{\partial}
\renewcommand{\i}{{\rm\mathbf i}}
\renewcommand{\Im}{{\rm Im }}
\renewcommand{\Re}{{\rm Re }}
\newcommand{\cF}{\mathcal{F}}
\def\be{\begin{equation}}
\def\ee{\end{equation}}
\def\br{\begin{eqnarray}}
\def\er{\end{eqnarray}}
\def\Langle{\left\langle}
\def\Rangle{\right\rangle}
\title{An efficient Monte Carlo interior penalty discontinuous Galerkin method for elastic 
wave scattering in random media}
\author{
Xiaobing Feng\thanks{Department of Mathematics, The University of
Tennessee, Knoxville, TN 37996, U.S.A.  ({\tt xfeng@math.utk.edu}). The work of this author 
was partially supported by the NSF grant DMS-1318486.}
\and
Cody Lorton\thanks{Department of Mathematics and Statistics, University of
West Florida, Pensacola, FL 32514, U.S.A.  ({\tt clorton@uwf.edu}).}
}
\begin{document}

\maketitle

\begin{abstract}
This paper develops and analyzes an efficient Monte Carlo interior penalty discontinuous Galerkin
(MCIP-DG) method for elastic wave scattering in random media. The method is constructed based on
a multi-modes expansion of the solution of the governing random partial differential equations. It is 
proved that the mode functions satisfy a three-term recurrence system of partial differential equations
(PDEs) which are nearly deterministic in the sense that the randomness only appears 
in the right-hand side
source terms, not in the coefficients of the PDEs. Moreover, the same differential operator applies to
all mode functions. A proven unconditionally stable and optimally convergent IP-DG method is used 
to discretize the deterministic PDE operator, an efficient numerical algorithm is proposed based on  
combining the Monte Carlo method and the IP-DG method with the $LU$ direct linear solver. 
It is shown that the algorithm converges optimally with respect to both the mesh size $h$ and the 
sampling number $M$, and practically its total computational complexity only amounts to solving 
a few deterministic elastic Helmholtz equations using a Guassian elimination direct linear solver.  Numerical
experiments are also presented to demonstrate the performance and key features
of the proposed MCIP-DG method.
\end{abstract}

\begin{keywords}
Elastic Helmholtz equations, random media, Rellich identity,
discontinuous Galerkin methods, error estimates, LU decomposition, Monte Carlo method.
\end{keywords}

\begin{AMS}
65N12, 
65N15, 
65N30, 
78A40  
\end{AMS}

Elastic wave scattering problems arise from applications in a 
variety of fields including geoscience, image science, the petroleum industry, 
and the defense industry, to name a few.
Such problems have been 
extensively studied both analytically and numerically in the past several decades 
(cf. \cite{Fouque_Garnier_Papanicolaou_Solna_07, Ishimaru_97} and the references therein). 
The material properties of the elastic media in which the wave 
propagates play a principle role in the methods used to solve the elastic wave 
scattering problem.
Common medium characterizations include homogeneous and isotropic media, 
inhomogeneous and anisotropic media, and random media.
As the characterization of the media becomes more complicated,  so do the 
computations of the solutions of the associated wave equations.
In the case of random media, wave forms may vary significantly for different samplings and
as a result, stochastic quantities of interests such as the mean, variance, and/or higher order moments must often be sought. 

In this paper we are concerned with developing efficient numerical methods for 
solving the elastic Helmholtz equations with random coefficients, which models 
the propagation in random media of elastic waves with a fixed frequency. Specifically, 
we consider the following random elastic Helmholtz problem: 
\begin{align}
	-\ddiv \big(\sigma(\bfu(\om,\cdot)) \big) - k^2 \alpha^2(\om,\cdot) \bfu(\om,
	\cdot) & = \bff(\om,\cdot) && \mbox{in } D, \label{Eq:ElasticPDE}\\
	\sigma \big(\bfu(\om,\cdot)\big) \bfnu + \i k A \bfu(\om,\cdot) & 
	= \bf0 && \mbox{on } \pa D, \label{Eq:ElasticBC}
\end{align}
for a.e. $\omega\in \Omega$.  Here $\sigma$ is the stress tensor defined by
\begin{align*}
	\sigma\big(\bfu(\om,\cdot)\big) &:= 2 \mu \bsnabla\bfu(\om,\cdot) 
	+ \lambda \ddiv \bfu(\om,\cdot) I, \\
	\bsnabla\bfu(\om,\cdot) &:= \frac{1}{2} \big( \bnabla \bfu(\om,
	\cdot) + \bnabla \bfu(\om,\cdot)^T \big),
\end{align*}
and $A$ is a $d \times d$ constant SPD matrix.  $k>0$ denotes the frequency 
of the wave. $\i=\sqrt{-1}$ denotes the imaginary unit. $D\subset \mathbb{R}^d$ ($d = 2,3$) 
is a bounded domain with boundary $\pa D$, $\bfnu$ denotes the outward normal to $\pa D$. 
For each $\bfx \in D$, $\alpha(\cdot,\bfx)=\sqrt{\rho(\cdot,\bfx)}$ 
is a real-valued random variable defined over a probability 
space $(\Omega, \mathcal{F}, P)$,  where $\rho(\cdot,\bfx)\geq 0$ denotes the density 
of the random media which is the main source of randomness in the above PDEs. 
Thus, $k\alpha(\om,\cdot)$ characterizes a random wave number for the elastic medium $D$. 
We also note that the notation $\bsnabla\bfu(\om,\cdot)$ is often called
the strain tensor and is denoted by $\varepsilon(\bfu(\om,\cdot))$ in the literature.

In this paper we mostly focus on the case of weakly random media in the sense 
that the elastic medium is a small random perturbation of a 
homogeneous background medium, that is, $\alpha(\om, \cdot):= 1 + \veps \eta(\om,\cdot)$. 
Here $\veps > 0$ represents the magnitude of the random fluctuation and $\eta 
\in L^2(\Om, L^\infty(D))$ is some random field which has a compact support on $D$ 
and satisfies
\begin{align*}
	P\Big \{ \om \in \Om \,\,;  \| \eta(\om, \cdot) \|_{L^\infty(D)} 
	\leq 1 \Big\} = 1.
\end{align*}
At the end of the paper, we will also present an idea on how to extend 
the numerical method and algorithm of this paper to a more general media cases.
We note that the boundary condition given in \eqref{Eq:ElasticBC} is known as the first 
order absorbing boundary condition (ABC) and this boundary condition 
simulates an unbounded domain by absorbing plane waves that come into the boundary in 
a normal direction (cf. \cite{Engquist_Majda_79}). We also note that since $\eta(\om,\cdot)$ is 
compactly supported on $D$ that $\alpha |_{\pa D} = 1$.  This choice was made to 
ensure that \eqref{Eq:ElasticBC} was indeed a first order ABC for every $\om \in \Om$.

Numerical approximations of random and stochastic partial differential equations (SPDEs) have
gained a lot of interests in recent years because of ever increasing needs
for modeling the uncertainties or noises that arise in industrial and engineering applications
\cite{Babuska_Nobile_Tempone_10, Babuska_Tempone_Zouraris_04,Caflisch_98, Fouque_Garnier_Papanicolaou_Solna_07, Ishimaru_97, 
Roman_Sarkis_06}. 
Two main numerical methods for random SPDEs are the Monte Carlo (finite element) 
method and the stochastic Galerkin method. The Monte Carlo method
obtains a set of independent identically distributed (i.i.d.) solutions by sampling
the PDE coefficients, and calculates the mean of the solution via a statistical
average over all the sampling in the probability space \cite{Caflisch_98}.
The stochastic Galerkin method, on the other hand, reduces the SPDE into a high dimensional 
deterministic equation by expanding the random coefficients in the equation using the 
Karhunen-Lo\`{e}ve or Wiener Chaos expansions \cite{Babuska_Nobile_Tempone_10,
Babuska_Tempone_Zouraris_04, Babuska_Tempone_Zouraris_05, DBO, Eiermann_Ernst_Ullmann_05, Liu_Riviere_13, 
Roman_Sarkis_06, Xiu_Karniadakis_02, Xiu_Karniadakis_02b}.
In general, these two methods become computationally expensive when a large number 
of degrees of freedom is involved in the spatial discretization, especially for 
Helmholtz-type equations. Indeed both methods become computationally prohibitive 
in the case that the frequency $k$ is large, because solving a
deterministic Helmholtz-type problem with large frequency is equivalent to
solving a large indefinite linear system of equations.  Furthermore, it is
well-known that standard iterative methods perform poorly when applied to linear
systems arising from Helmholtz-type problems \cite{Ernst_Gander_12}.
The Monte Carlo method requires solving the boundary value problem many times 
with different sampling coefficients, while the stochastic Galerkin method usually 
leads to a high dimensional deterministic equation that may be too expensive to solve.

Recently, we have developed a new efficient multi-modes Monte Carlo method for 
modeling acoustic wave propagation in weakly random media \cite{Feng_Lin_Lorton_15}. 
To solve the governing random Helmholtz equation,
the solution is first represented by a sum of mode functions, where each
mode satisfies a Helmholtz equation with deterministic coefficients and a random source.
The expectation of each mode function is then computed using a Monte Carlo interior penalty
discontinuous Galerkin (MCIP-DG) method.
We take advantage that the deterministic Helmholtz operators for all the modes are identical,
and employ an $LU$ solver for obtaining the numerical solutions.
Since the discretized equations for all the modes have the same constant coefficient matrix,
by using the $LU$ decomposition matrices repeatedly, the solutions for all samplings of mode functions
are obtained in an efficient way by performing simple forward
and backward substitutions. This leads to a tremendous saving in the computational costs.
Indeed, as discussed in \cite{Feng_Lin_Lorton_15}, the computational complexity
of the proposed algorithm is comparable to that of solving a few
deterministic Helmholtz problem using the a Gaussian elimination direct solver.
%
%
Due to the similarities between the scalar and elastic Helmholtz operators, it
is natural to extend the multi-modes MCIP-DG method of \cite{Feng_Lin_Lorton_15} to the 
elastic case for solving \eqref{Eq:ElasticPDE}--\eqref{Eq:ElasticBC}.  On 
the other hand the scalar and elastic Helmholtz operators have different
behaviors and kernel spaces so a separate study must be carried out to
construct and analyze the  multi-modes MCIP-DG method for the elastic Helmholtz problem.
This is exactly the primary goal of this paper.

The rest of the paper is organized as follows.  In Section \ref{sec:PDE_Analysis} we present
a complete PDE analysis of problem \eqref{Eq:ElasticPDE}--\eqref{Eq:ElasticBC}, including 
frequency-explicit solution estimates along with existence and uniqueness of solutions.  
In Section \ref{sec:Multi-Modes} the multi-modes expansion of the solution is defined and the
convergence of the expansion is also demonstrated. Moreover, error estimates are derived
for its finite term approximations. In Section \ref{sec:MCIP-DG} we formulate
our MCIP-DG method and derive error estimates for the method.  Section 
\ref{sec:Numerical_Procedure} lays out the overall multi-modes MCIP-DG 
algorithm. Computational complexity and convergence rate analysis are carried 
out for the algorithm. In Section \ref{sec:Numerical_Experiments} we
present several numerical experiments to demonstrate the performance and key features of
the proposed multi-modes MCIP-DG method and the overall algorithm.
Finally, in Section \ref{sec-7} we describe an idea on how to extend the proposed
MCIP-DG method and algorithm to the cases where more general (i.e., non-weak) random 
media must be considered.

\section{PDE analysis} \label{sec:PDE_Analysis}

\subsection{Preliminaries}
Standard function and space notations are adopted in this paper.  
$\bL^2(D) = \big( L^2(D) \big)^d$ denotes the space of all complex 
vector-valued square-integrable functions on $D$, and $\bH^s(D) = \big( H^s(D) 
\big)^d$ denotes the standard complex vector-valued Sobolev space.  For any 
$S \subset D$ and $\Sigma \subset \pa D$ we let $(\cdot,\cdot)_S$ and $
\langle \cdot,\cdot \rangle_\Sigma$ denote the standard complex-valued 
$L^2$-inner products on $S$ and $\Sigma$, respectively.  We also define the special 
function spaces
\begin{align*}
	\bH^1_+(D) &:= \Big\{ \bfv \in \bH^1(D) \, ; \, \bnabla \bfu \big|_{\pa 
	D} \in \bL^2(\pa D) \Big\}, \\
	\bV &:= \Big\{ \bfv \in \bH^1_+(D) \, ; \, \ddiv(\sigma (\bfu)) \in 
	\bL^2(D) \Big\}.
\end{align*}
Without loss of generality, we assume that the domain $D \subset 
B_R(\mathbf{0})$.  Throughout this paper we also assume that $D$ is a 
convex polygonal or a smooth domain that satisfies a star-shape condition 
with respect to the origin, i.e. there exists a positive constant $c_0$ such that 
\begin{align*}
	\bfx \cdot \bfnu \geq c_0 \qquad \mbox{ for all } \bfx \in \pa D.
\end{align*}
$\bL^2(\Om) = \big( L^2(\Om) \big)^d$ will denote the space of vector-valued 
square integrable functions on the probability space $(\Omega, \mathcal{F}, 
P)$.  $\E(\cdot)$ will denote the expectation operator given by
\begin{align} \label{eq:Expectation}
	\E(\bfv) := \int_{\Om} \bfv \, dP \qquad \mbox{ for all } \bfv \in 
	\bL^2(\Om),
\end{align}
and the abbreviation $a.s.$ will stand for \textit{almost surely}.

With these conventions in place, we introduce the following definition.
\begin{definition} \label{Def:WeakSolution}
Let $\bff \in \bL^2(\Om, \bL^2(D))$.  A function $\bfu \in \bL^2(\Om, 
\bH^1(D))$ is called a weak solution to problem 
\eqref{Eq:ElasticPDE}--\eqref{Eq:ElasticBC} if it satisfies the following identity:
\begin{align} \label{Eq:WeakFormulation}
	\int_\Om a(\bfu, \bfv) \, dP = \int_\Om (\bff, \bfv)_D \, dP \qquad 
	\forall \bfv \in \bL^2(\Om, \bH^1(D)),
\end{align}
where
\begin{align} \label{Eq:WeakSesquilinearForm}
	a(\bfw, \bfv) &:= 2\mu (\bsnabla\bfw,\bsnabla\bfv)_D + \lambda (\ddiv \bfw, \ddiv 
	\bfv)_D - k^2( \alpha^2 \bfw, \bfv)_D \\
	& \qquad + \i k \langle A \bfw, \bfv \rangle_{\pa D}. \notag
\end{align}
\end{definition}
To simplify the analysis throughout the rest of this paper we introduce the 
following special semi-norm on $\bH^1(D)$:
\begin{align} \label{eq:Seminorm1}
	| \bfv |_{1,D} := \lambda \| \ddiv \bfv \|_{L^2(D)} + 2 \mu \| \bsnabla\bfv \|
	_{L^2(D)} \qquad \mbox{ for all } \bfv \in \bH^1(D).
\end{align}
\begin{remark}
	a) Korn's second inequality ensures that the semi-norm $| \cdot |_{1,D}$ 
	defined above is equivalent to the standard $H^1$ semi-norm.
	
	b) By using Lemma \ref{lem:PDEEstimate3} below, it is easy to show that 
	any solution $\bfu$ of \eqref{Eq:WeakFormulation}--
	\eqref{Eq:WeakSesquilinearForm} satisfies $\bfu \in \bL^2( \Om, \bV)$.
\end{remark}

In \cite{Cummings_Feng_06}, it was shown that estimates for solutions of the 
deterministic elastic Helmholtz problem are optimal in $k$ when the solution 
satisfies a Korn-type inequality on the boundary of the form
\begin{align*}
	\| \bnabla \bfu \|^2_{L^2(\pa D)} \leq \tilde{K} \left[ \| \bnabla \bfu \|
	^2_{L^2(D)} + \| \bsnabla \bfu \|^2_{L^2(\pa D)} + \| \ddiv \bfu \|
	^2_{L^2(\pa D)} \right],
\end{align*}
where $\tilde{K}$ is a positive constant independent of $\bfu$.
In the next subsection, a similar result is shown for solutions satisfying a 
stochastic Korn-type inequality on the boundary given by
\begin{align} \label{eq:Stoch_Bound_Korn}
	&\E \big( \| \bnabla \bfu \|^2_{L^2(\pa D)} \big) \\
	& \qquad \leq \tilde{K} \left[ \E 
	\big( \| \bnabla \bfu \|^2_{L^2(D)} \big) + \E \big( \| \bsnabla \bfu \|
	^2_{L^2(\pa D)} \big) + \E \big( \| \ddiv \bfu \|^2_{L^2(\pa D)} \big) 
	\right]. \notag
\end{align}
The Korn-type inequality above is just a conjecture at this point.
For this reason we introduce the special function space
\begin{align*}
	\bV_{\tilde{K}} &:= \Big\{ \bfv \in \bH^1_+(D) \, ; \, \bfv \, 
	\mbox{satisfies \eqref{eq:Stoch_Bound_Korn}} \, \Big\},
\end{align*}
for some $\tilde{K}$ independent of $\bfu$.
We also introduce a special parameter $\ta$, that will be used in the 
solution estimates presented in the next section.  $\ta$ is defined as
\begin{align} \label{eq:ta}
	\ta := \left\{
	\begin{array}{l r}
	1 & \mbox{ if } \bfu \in \bV_{\tilde{K}} \\
	2 & \mbox{ otherwise }
	\end{array}
	\right. ,
\end{align}
where $\bfu$ is the solution to \eqref{Eq:ElasticPDE}--\eqref{Eq:ElasticBC}.

\subsection{Frequency-explicit solution estimates}
In this subsection, we derive stability estimates for the solution 
of problem \eqref{Eq:WeakFormulation}--\eqref{Eq:WeakSesquilinearForm}.  
Since the main concern of this paper is the case 
when $k$ is large, we make the assumption that $k \geq 1$ to 
simplify some of the estimates.  The goal of this section is to derive solution
estimates that are explicitly dependent on the frequency $k$.  
These frequency-explicit estimates play a pivotal role in the 
development of numerical methods for deterministic wave equations (cf. 
\cite{Feng_Wu_09}, \cite{Feng_Wu_11},\cite{Feng_Lorton_15}). We 
will obtain existence and uniqueness of solutions to 
\eqref{Eq:WeakFormulation}--\eqref{Eq:WeakSesquilinearForm} as a direct 
consequence of the estimates established in this subsection.  

We begin with a number of technical lemmas which will be used in the proof 
of our solution estimates. Our analysis follows the 
analysis for the deterministic elastic Helmholtz equations carried out in 
\cite{Cummings_Feng_06} with many changes made to accommodate the inclusion 
of the random field $\alpha(\om,\cdot)$. 

For many of the estimates derived in this paper, it is important to note that 
the matrix $A$ in \eqref{Eq:ElasticBC} is a real symmetric positive definite 
matrix.  Thus, there exist positive constants $c_A$ and $C_A$ such that
\begin{align} \label{eq:SPDProperty}
	c_A \| \bfu \|^2_{L^2(\pa D)} \leq \langle A \bfu, \bfu \rangle_{\pa D} 
	\leq C_A \| \bfu \|^2_{L^2(\pa D)},
\end{align}
for all $\bfu \in \bL^2(\pa D)$.

\begin{lemma} \label{lem:PDEEstimate1}
	Suppose $\bfu \in \bL^2(\Om, \bH^1(D))$ solves 
	\eqref{Eq:WeakFormulation}--\eqref{Eq:WeakSesquilinearForm}.  
	Then for any $\delta_1, \delta_2 > 0$, $\bfu$ satisfies the following 
	estimates:
	\begin{align} \label{eq:PDEEstimate1_1}
		\E \big( | \bfu |^2_{1,D} \big) &\leq \big((1+\veps)^2 k^2 + 
		\delta_1\big) \E \big( \| \bfu \|^2_{L^2(D)} \big) + \frac{1}
		{4\delta_1} \E\big( \| \bff \|^2_{L^2(D)} \big), \\
		\E \big( \| \bfu \|^2_{L^2(\pa D)} \big) & \leq \frac{\delta_2}{c_A 
		k} \E\big( \| \bfu \|^2_{L^2(D)} \big) + \frac{1}
		{4\delta_2 c_A k} \E \big( \| \bff \|^2_{L^2(D)} \big). 
		\label{eq:PDEEstimate1_2} 
	\end{align}
\end{lemma}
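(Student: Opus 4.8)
The plan is to derive both estimates by testing the weak formulation \eqref{Eq:WeakFormulation} with $\bfv = \bfu$ and then separating the real and imaginary parts of the resulting identity. Since the sought solution $\bfu$ lies in $\bL^2(\Om, \bH^1(D))$, it is itself an admissible test function, so substituting $\bfv = \bfu$ gives $\E\big(a(\bfu,\bfu)\big) = \E\big((\bff, \bfu)_D\big)$. The key observation is that the four terms of \eqref{Eq:WeakSesquilinearForm} split cleanly: the two volume terms $2\mu(\bsnabla\bfu,\bsnabla\bfu)_D$ and $\lambda(\ddiv\bfu,\ddiv\bfu)_D$ together with the mass term $k^2(\alpha^2\bfu,\bfu)_D$ are all real (recall $\alpha$ is real-valued), whereas the boundary term $\i k \langle A\bfu, \bfu\rangle_{\pa D}$ is purely imaginary because $A$ is real symmetric and hence $\langle A\bfu,\bfu\rangle_{\pa D}$ is real. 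Consequently
\[
	\Re\,\E\big(a(\bfu,\bfu)\big) = \E\big(|\bfu|^2_{1,D}\big) - k^2\,\E\big(\|\alpha\bfu\|^2_{L^2(D)}\big), \qquad
	\Im\,\E\big(a(\bfu,\bfu)\big) = k\,\E\big(\langle A\bfu,\bfu\rangle_{\pa D}\big),
\]
where the first two real volume terms are identified with $\E(|\bfu|^2_{1,D})$.

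First I would prove \eqref{eq:PDEEstimate1_1} from the real part. Taking real parts of the tested identity yields $\E(|\bfu|^2_{1,D}) = k^2\,\E(\|\alpha\bfu\|^2_{L^2(D)}) + \Re\,\E((\bff,\bfu)_D)$. To control the mass term I would invoke the modeling hypothesis $\alpha = 1 + \veps\eta$ with $\|\eta(\om,\cdot)\|_{L^\infty(D)} \le 1$ almost surely, which gives the pointwise (in $\om$) bound $\alpha^2 \le (1+\veps)^2$ and hence $k^2\,\E(\|\alpha\bfu\|^2_{L^2(D)}) \le (1+\veps)^2 k^2\,\E(\|\bfu\|^2_{L^2(D)})$. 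For the source term I would apply the Cauchy--Schwarz inequality in both $D$ and $\Om$, followed by Young's inequality $ab \le \delta_1 a^2 + \tfrac{1}{4\delta_1}b^2$, to obtain $\Re\,\E((\bff,\bfu)_D) \le \delta_1\,\E(\|\bfu\|^2_{L^2(D)}) + \tfrac{1}{4\delta_1}\,\E(\|\bff\|^2_{L^2(D)})$. Combining these two bounds gives exactly \eqref{eq:PDEEstimate1_1}.

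Next I would prove \eqref{eq:PDEEstimate1_2} from the imaginary part, namely $k\,\E(\langle A\bfu,\bfu\rangle_{\pa D}) = \Im\,\E((\bff,\bfu)_D)$. Using the coercivity lower bound $\langle A\bfu,\bfu\rangle_{\pa D} \ge c_A\|\bfu\|^2_{L^2(\pa D)}$ from \eqref{eq:SPDProperty} on the left-hand side, and bounding the right-hand side by Cauchy--Schwarz in $D$ and $\Om$ together with Young's inequality with parameter $\delta_2$ exactly as above, I would arrive at $c_A k\,\E(\|\bfu\|^2_{L^2(\pa D)}) \le \delta_2\,\E(\|\bfu\|^2_{L^2(D)}) + \tfrac{1}{4\delta_2}\,\E(\|\bff\|^2_{L^2(D)})$. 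Dividing through by $c_A k$ yields \eqref{eq:PDEEstimate1_2}.

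These are standard energy estimates, so I do not anticipate a genuine obstacle; the only points requiring care are the bookkeeping of the real/imaginary splitting---in particular ensuring that the boundary term is purely imaginary, which rests on the symmetry of $A$---and the legitimacy of pulling the deterministic constant $(1+\veps)^2$ out of the expectation, which is justified by the almost-sure bound on $\eta$. I would also emphasize that $\delta_1$ and $\delta_2$ are deliberately left free: this is precisely what will permit these preliminary bounds to be tuned and combined (together with the Rellich-identity arguments of the subsequent lemmas) so as to absorb the undesirable $\E(\|\bfu\|^2_{L^2(D)})$ term and thereby close the frequency-explicit stability estimate.
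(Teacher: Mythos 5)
Your proposal is correct and follows essentially the same route as the paper's own proof: test with $\bfv=\bfu$, split the resulting identity into real and imaginary parts, bound the real part via $\alpha^2\le(1+\veps)^2$ together with Cauchy--Schwarz and Young's inequality with parameter $\delta_1$, and bound the imaginary part via \eqref{eq:SPDProperty} and the same Cauchy--Schwarz/Young argument with $\delta_2$ before dividing by $c_A k$. The only difference is that you spell out the real/imaginary bookkeeping and the role of the symmetry of $A$ more explicitly than the paper does, which is fine.
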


\begin{proof}
	Setting $\bfv = \bfu$ in \eqref{Eq:WeakFormulation} and taking the real 
	and imaginary parts separately yields
	\begin{align} \label{eq:PDEEstimate1_3}
		&\re \int_\Om (\bff, \bfu)_D  \, dP = \int_\Om | \bfu |^2_{1,D} - k^2 
		\big \| (1 + \veps \eta) \bfu \big \|^2_{L^2(D)} \, dP, \\
		&\im \int_\Om  (\bff, \bfu)_D \, dP \label{eq:PDEEstimate1_4}
		= k \int_\Om \big \langle A \bfu, \bfu \big \rangle_{\pa D} \, dP .
	\end{align}
	\eqref{eq:PDEEstimate1_1} is obtained by rearranging the terms in 
	\eqref{eq:PDEEstimate1_3} and applying the Cauchy-Schwarz inequality.
Applying the Cauchy-Schwarz inequality along with \eqref{eq:SPDProperty} to 
\eqref{eq:PDEEstimate1_4} produces
\begin{align*}
	c_A k \E \big( \| \bfu \|^2_{L^2(\pa D)} \big) & \leq \delta_2 
	\E \big( \| \bfu \|^2_{L^2(D)} \big) + \frac{1}{4 \delta_2} \E \big( \| 
	\bff \|^2_{L^2(D)} \big).
\end{align*}
We divide both sides of the above inequality by $c_A k$. This yields 
\eqref{eq:PDEEstimate1_2}.  The proof is complete.
\end{proof}

Korn's second inequality is essential to establishing solution 
estimates for the deterministic elastic Helmholtz equation (cf. 
\cite{Cummings_Feng_06}).  Here we state a stochastic version of this 
inequality.
\begin{lemma}[Stochastic Korn's Inequality] \label{lem:Korn}
	Let $\bfu \in \bL^2(\Om,\bH^1(D))$, then there exists a positive constant 
	$K$ such that
	\begin{align}
		\E \big( \| \bsnabla\bfu \|^2_{L^2(D)} \big) + \E \big( \|  \bfu \|^2
		_{L^2(D)} 
		\big) \geq K \E \big( \| \bfu \|^2_{H^1(D)} \big). \label{eq:Korn}
	\end{align}
\end{lemma}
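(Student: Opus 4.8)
The plan is to reduce the stochastic Korn inequality \eqref{eq:Korn} to the classical (deterministic) Korn's second inequality by integrating the pointwise-in-$\om$ statement over the probability space. The starting point is that for almost every fixed $\om \in \Om$, the function $\bfu(\om, \cdot)$ lies in $\bH^1(D)$, so the classical Korn's second inequality applies on the spatial domain $D$. That classical inequality states that there is a constant $K > 0$, depending only on $D$ (and the dimension $d$), such that
\begin{align*}
	\| \bsnabla \bfu(\om,\cdot) \|^2_{L^2(D)} + \| \bfu(\om,\cdot) \|^2_{L^2(D)} \geq K \, \| \bfu(\om,\cdot) \|^2_{H^1(D)}
\end{align*}
holds for a.e. $\om$. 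The crucial observation is that the constant $K$ is independent of $\om$, since it depends only on the fixed spatial domain $D$ and not on the random field $\alpha(\om,\cdot)$; this is what makes the integration step clean.

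First I would invoke the classical Korn inequality pointwise in $\om$ to obtain the displayed inequality above for a.e.\ $\om \in \Om$. Next I would integrate both sides over $\Om$ with respect to the probability measure $dP$, using the monotonicity of the integral (the integrand on the left minus $K$ times the integrand on the right is nonnegative a.e., so its expectation is nonnegative). Recognizing that $\E(\cdot) = \int_\Om (\cdot) \, dP$ turns the three spatial norms into their expected values, this directly yields
\begin{align*}
	\E \big( \| \bsnabla\bfu \|^2_{L^2(D)} \big) + \E \big( \| \bfu \|^2_{L^2(D)} \big) \geq K \, \E \big( \| \bfu \|^2_{H^1(D)} \big),
\end{align*}
which is exactly \eqref{eq:Korn}. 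The fact that $\bfu \in \bL^2(\Om, \bH^1(D))$ guarantees all three expectations are finite, so no integrability issues arise.

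The main (and essentially only) obstacle is the measurability bookkeeping: one must check that $\om \mapsto \| \bfu(\om,\cdot) \|^2_{H^1(D)}$ and the analogous maps for $\bsnabla\bfu$ and $\bfu$ are measurable functions of $\om$, so that the integration step is legitimate and the expectations are well defined. This follows from the assumption $\bfu \in \bL^2(\Om, \bH^1(D))$, which by definition carries the requisite Bochner-measurability of the $\bH^1(D)$-valued map $\om \mapsto \bfu(\om,\cdot)$; the norm maps are then measurable as compositions with continuous functionals. Apart from this, the argument is a routine lifting of a deterministic inequality to its stochastic counterpart, the key point being the uniformity of the Korn constant $K$ in $\om$.
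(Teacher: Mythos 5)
Your proof is correct and follows exactly the same route as the paper: apply the deterministic Korn's second inequality pointwise for a.e.\ fixed $\om$ (the paper cites Nitsche's result for this), noting the constant $K$ depends only on $D$, and then integrate over $\Om$. Your additional remark on the Bochner-measurability of the norm maps is sound bookkeeping that the paper leaves implicit.
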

\begin{proof}
	There exists $K > 0$ such that for fixed $\om \in \Om$, $\bfu(\om,\cdot)$ 
	satisfies Korn's second inequality
	\begin{align*}
		\big\| \bsnabla\bfu(\om,\cdot) \big\|^2_{L^2(D)}  +  \big \|  \bfu(\om,
		\cdot) \big \|^2_{L^2(D)}  \geq K \big \| \bfu(\om,\cdot) \big \|^2
		_{H^1(D)} .
	\end{align*}
For a proof of this inequality see \cite{Nitsche_98}.
Integrating over $\Om$ yields \eqref{eq:Korn}.  The proof is complete.
\end{proof}

Rellich identities for the elastic Helmholtz operator are also essential in 
the proof of solution estimates for the deterministic elastic 
Helmholtz equation (cf. \cite{Cummings_Feng_06}).  Here we state stochastic 
versions of these Rellich identities.
\begin{lemma}[Stochastic Rellich Identities] \label{lem:Rellich}
	Suppose that $\bfu \in \bL^2(\Om,\bH^2(D))$.  Then for $\bfv(\om,\bfx) 
	:= (\bnabla \bfu(\om,\bfx)) \bfx$, we have the following stochastic 
	Rellich identities:
	\begin{align}
		&\re \int_\Om (\bfu, \bfv)_D \, dP = \int_\Om \Big( -\frac{d}{2} \| 
		\bfu \|^2_{L^2(D)} + \frac{1}{2} \big \langle \bfx \cdot \bfnu, | \bfu 
		|^2 \big \rangle_{\pa \Om} \Big) \, dP, \label{eq:Rellich1} \\
		&\re \int_\Om \Big( 2\mu (\bsnabla\bfu, \bsnabla\bfv)_D + \lambda 
		(\ddiv \bfu,	 \ddiv \bfv)_D \Big) \, dP \label{eq:Rellich2} \\
		& \qquad = \frac{2-d}{2} \int_\Om \Big( 2\mu \|\bsnabla\bfu\|
		^2_{L^2(D)} + 
		\lambda \|\ddiv \bfu \|^2_{L^2(D)} \Big) \, dP \notag \\
		& \qquad \qquad + \frac{1}{2} \int_\Om \Big( 2\mu \big \langle \bfx 
		\cdot \bfnu, | \ddiv \bfu |^2 \big \rangle_{\pa \Om} + \lambda \big 
		\langle \bfx \cdot \bfnu, |\bsnabla\bfu|^2 \big \rangle_{\pa \Om} 
		\Big) \, dP. \notag
	\end{align}
\end{lemma}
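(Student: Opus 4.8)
The plan is to prove both identities pointwise in $\om$ — that is, as deterministic Rellich identities for the fixed section $\bfu(\om,\cdot)\in\bH^2(D)$, valid for a.e. $\om\in\Om$ — and then integrate the resulting equalities against $dP$ over $\Om$. Since $\bfu\in\bL^2(\Om,\bH^2(D))$, for a.e. $\om$ the section $\bfu(\om,\cdot)$ lies in $\bH^2(D)$, so every second-order derivative and every boundary trace below is well defined; because $D\subset B_R(\mathbf{0})$ is bounded, $\bfx$ and the strain tensor are bounded, hence all integrands are $P$-integrable, which justifies interchanging the spatial integrations by parts with the expectation. A key point worth emphasizing is that neither identity uses the PDE \eqref{Eq:ElasticPDE}--\eqref{Eq:ElasticBC} nor the random field $\alpha$: they are purely differential identities for the radial multiplier $\bfv(\om,\bfx)=(\bnabla\bfu(\om,\bfx))\bfx$, whose $i$-th component is $v_i=\bfx\cdot\bnabla u_i$.

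For \eqref{eq:Rellich1} I would begin from the scalar product rule $\re\bigl(u_i\,\overline{\bfx\cdot\bnabla u_i}\bigr)=\tfrac12\,\bfx\cdot\bnabla|u_i|^2$, sum over $i$ to obtain $\re\bigl(\bfu\cdot\overline{\bfv}\bigr)=\tfrac12\,\bfx\cdot\bnabla|\bfu|^2$, and integrate over $D$. Writing $\bfx\cdot\bnabla|\bfu|^2=\ddiv\bigl(|\bfu|^2\bfx\bigr)-|\bfu|^2\,\ddiv\bfx$, using $\ddiv\bfx=d$, and applying the divergence theorem converts the right-hand side into $\tfrac12\langle\bfx\cdot\bfnu,|\bfu|^2\rangle_{\pa D}-\tfrac{d}{2}\|\bfu\|^2_{L^2(D)}$. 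Taking $\E(\cdot)$ then yields \eqref{eq:Rellich1}.

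The identity \eqref{eq:Rellich2} rests on two algebraic facts that I would establish first by direct differentiation, noting that $\pa_j x_k=\delta_{jk}$ is constant so no commutator terms arise: $\ddiv\bfv=\ddiv\bfu+\bfx\cdot\bnabla(\ddiv\bfu)$ and, componentwise, $(\bsnabla\bfv)_{ij}=(\bsnabla\bfu)_{ij}+\bfx\cdot\bnabla\,(\bsnabla\bfu)_{ij}$. Each of the inner products $\lambda(\ddiv\bfu,\ddiv\bfv)_D$ and $2\mu(\bsnabla\bfu,\bsnabla\bfv)_D$ then splits into a scale-invariant part, equal to $\lambda\|\ddiv\bfu\|^2_{L^2(D)}$ and $2\mu\|\bsnabla\bfu\|^2_{L^2(D)}$ respectively, plus a radial part of the form $\re\int_D w\,\overline{\bfx\cdot\bnabla w}$ with $w=\ddiv\bfu$ (resp.\ $w=(\bsnabla\bfu)_{ij}$). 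Applying exactly the same product-rule-plus-divergence-theorem computation as in the first identity turns each radial part into $\tfrac12\langle\bfx\cdot\bfnu,|w|^2\rangle_{\pa D}-\tfrac{d}{2}\|w\|^2_{L^2(D)}$. Collecting terms gives the bulk contribution $\bigl(1-\tfrac{d}{2}\bigr)=\tfrac{2-d}{2}$ times $2\mu\|\bsnabla\bfu\|^2_{L^2(D)}+\lambda\|\ddiv\bfu\|^2_{L^2(D)}$, together with the boundary contribution $\tfrac12\langle\bfx\cdot\bfnu,\,2\mu|\bsnabla\bfu|^2+\lambda|\ddiv\bfu|^2\rangle_{\pa D}$, in which each material coefficient stays attached to the quantity it multiplies in the bulk. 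Integrating over $\Om$ produces \eqref{eq:Rellich2}.

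The main obstacle is not conceptual but organizational: verifying the strain identity $(\bsnabla\bfv)_{ij}=(\bsnabla\bfu)_{ij}+\bfx\cdot\bnabla\,(\bsnabla\bfu)_{ij}$ and correctly tracking which modulus-squared each of $\lambda$ and $2\mu$ multiplies in the boundary term require careful index manipulation, since the symmetrized gradient of the radial multiplier must be matched term by term against $\pa_k(\bsnabla\bfu)_{ij}$. A secondary technical point is justifying the integrations by parts at only $\bH^2$ regularity — one may argue first for smooth $\bfu(\om,\cdot)$ and pass to the limit by density, invoking continuity of the relevant trace maps — and then ensuring the deterministic identity is measurable in $\om$ and dominated by an $L^1(\Om)$ function, so that Fubini's theorem legitimizes taking the expectation in the final step.
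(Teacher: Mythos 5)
Your proof is correct, and at the top level it has the same two-step structure as the paper's: establish the deterministic Rellich identities for the section $\bfu(\om,\cdot)$ for a.e.\ fixed $\om$, then integrate over $\Om$. The difference is in how the deterministic identities are obtained. The paper simply quotes them (Proposition 2 and Lemma 5 of the cited Cummings--Feng reference), whereas you derive them from first principles via the product rule $\re\bigl(w\,\overline{\bfx\cdot\bnabla w}\bigr)=\tfrac12\,\bfx\cdot\bnabla|w|^2$, the splittings $\ddiv\bfv=\ddiv\bfu+\bfx\cdot\bnabla(\ddiv\bfu)$ and $(\bsnabla\bfv)_{ij}=(\bsnabla\bfu)_{ij}+\bfx\cdot\bnabla(\bsnabla\bfu)_{ij}$, and the divergence theorem. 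Your route is self-contained and makes the mechanism transparent (each inner product splits into a scale-invariant part plus a radial part); the paper's route is shorter but leans entirely on the external reference. The measurability/domination remarks you add to justify exchanging the spatial integration by parts with the expectation are routine but are a point the paper leaves implicit.

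One substantive point deserves emphasis: your derivation produces the boundary term
\begin{align*}
\frac12\int_\Om \Bigl( 2\mu\bigl\langle \bfx\cdot\bfnu,\, |\bsnabla\bfu|^2\bigr\rangle_{\pa D} + \lambda\bigl\langle \bfx\cdot\bfnu,\, |\ddiv\bfu|^2\bigr\rangle_{\pa D}\Bigr)\,dP,
\end{align*}
i.e.\ with each modulus squared attached to the same material coefficient as in the bulk, whereas the lemma as stated (and the paper's own proof of it) pairs $2\mu$ with $|\ddiv\bfu|^2$ and $\lambda$ with $|\bsnabla\bfu|^2$. Your pairing is the correct one. A quick check: take $d=2$ and $\bfu=(x_2,0)$, so that $\bfv=(\bnabla\bfu)\bfx=\bfu$, $\ddiv\bfu=0$, and $|\bsnabla\bfu|^2=\tfrac12$; the left-hand side of \eqref{eq:Rellich2} equals $\mu\,|D|$, which matches your boundary term (the bulk term vanishes since $d=2$), but equals $\tfrac{\lambda}{2}|D|$ under the paper's pairing --- a contradiction unless $\lambda=2\mu$. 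The swap in the statement is evidently a typo: in Step 1 of the proof of Theorem \ref{thm:PDEEstimate}, the paper applies the Rellich identities with the coefficients attached exactly as in your version. So your argument establishes the corrected identity rather than the misprinted one; there is no gap on your side, but you should flag explicitly that your conclusion differs from the displayed statement and that the statement needs this correction.
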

\begin{proof}
	For any $\om \in \Om$ we obtain the following identities from Proposition 
	2 and Lemma 5 of \cite{Cummings_Feng_06}:
	\begin{align*}
		&\re \big(\bfu(\om,\cdot), \bfv(\om,\cdot)\big)_D  = -\frac{d}{2} 
		\big \| \bfu(\om,\cdot) \big \|^2_{L^2(D)} + \frac{1}{2} \big \langle 
		\bfx \cdot \bfnu, | \bfu(\om,\cdot) |^2 \big \rangle_{\pa \Om}, \\
		&\re \Big( 2\mu \big ( \bsnabla\bfu(\om,\cdot), \bsnabla\bfv(\om,
		\cdot) \big )_D + \lambda \big ( \ddiv \bfu(\om,\cdot), \ddiv \bfv(\om,
		\cdot) \big )_D \Big) \\
		& \qquad = \frac{2-d}{2} \Big( 2\mu \big \|\bsnabla\bfu(\om,\cdot) 
		\big \|^2_{L^2(D)} + \lambda \big \|\ddiv \bfu(\om,\cdot) \big \|
		^2_{L^2(D)} \Big)  \\
		& \qquad \qquad + \frac{1}{2} \Big( 2\mu \big \langle \bfx \cdot 
		\bfnu, | \ddiv \bfu(\om,\cdot) |^2 \big \rangle_{\pa \Om} + \lambda 
		\big \langle \bfx \cdot \bfnu, |\bsnabla\bfu(\om,\cdot)|^2 \big 
		\rangle_{\pa \Om} \Big) .
	\end{align*}
\eqref{eq:Rellich1} and \eqref{eq:Rellich2} are obtained by integrating the 
above identities over $\Om$. The proof is complete.
\end{proof}

The following two lemmas relate higher order norms of the solution $\bfu$ of 
\eqref{Eq:WeakFormulation}--\eqref{Eq:WeakSesquilinearForm} to the $\bL^2$-norm 
of $\bfu$.
\begin{lemma} \label{lem:PDEEstimate2}
	Suppose that $\bfu \in \bL^2(\Om, \bH^1(D))$ solves 
	\eqref{Eq:WeakFormulation}--\eqref{Eq:WeakSesquilinearForm}. Then 
	for all $\delta > 0$,
	\begin{align} \label{eq:PDEEstimate2_1}
		&2 \mu K \E \big( \| \bfu \|_{H^1(D)}^2 \big) \\
		& \qquad \leq 2 \left( k^2(1 + 
		\veps)^2 + \delta + 2\mu \right) \E \big( \| \bfu \|_{L^2(D)}^2 \big) 
		+  \frac{1}{2 \delta}  \E\big( \| \bff \|^2_{L^2(D)} \big). \notag
	\end{align}
\end{lemma}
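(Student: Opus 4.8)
The plan is to combine the stochastic Korn inequality of Lemma~\ref{lem:Korn} with the energy estimate already established in Lemma~\ref{lem:PDEEstimate1}. The target is a bound on the full $\bH^1$-norm of $\bfu$, whereas \eqref{eq:PDEEstimate1_1} only controls the energy semi-norm $|\cdot|_{1,D}$, which involves the symmetric gradient $\bsnabla\bfu$ and the divergence $\ddiv\bfu$ but neither $\bfu$ itself nor the full gradient $\bnabla\bfu$. Korn's inequality is precisely the tool that upgrades control of $\bsnabla\bfu$ (modulo a lower-order $L^2$ term) into control of the entire gradient, so the whole proof amounts to chaining these two estimates together.

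First I would apply \eqref{eq:Korn}, multiplied through by $2\mu$, to obtain $2\mu K\, \E\big(\|\bfu\|^2_{H^1(D)}\big) \le 2\mu\,\E\big(\|\bsnabla\bfu\|^2_{L^2(D)}\big) + 2\mu\,\E\big(\|\bfu\|^2_{L^2(D)}\big)$. Next, recalling the semi-norm \eqref{eq:Seminorm1} and using that $\mu,\lambda\ge 0$, I would discard the nonnegative divergence contribution to get the pointwise-in-$\om$ bound $2\mu\|\bsnabla\bfu\|^2_{L^2(D)} \le |\bfu|^2_{1,D}$, hence $2\mu\,\E\big(\|\bsnabla\bfu\|^2_{L^2(D)}\big) \le \E\big(|\bfu|^2_{1,D}\big)$.

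Then I would insert \eqref{eq:PDEEstimate1_1}, choosing the free parameter $\delta_1$ there equal to $\delta$, which bounds $\E\big(|\bfu|^2_{1,D}\big)$ by $\big((1+\veps)^2k^2 + \delta\big)\E\big(\|\bfu\|^2_{L^2(D)}\big) + \tfrac{1}{4\delta}\,\E\big(\|\bff\|^2_{L^2(D)}\big)$. Combining the three displays and collecting the $\E\big(\|\bfu\|^2_{L^2(D)}\big)$ terms yields $2\mu K\,\E\big(\|\bfu\|^2_{H^1(D)}\big) \le \big((1+\veps)^2k^2 + \delta + 2\mu\big)\E\big(\|\bfu\|^2_{L^2(D)}\big) + \tfrac{1}{4\delta}\,\E\big(\|\bff\|^2_{L^2(D)}\big)$, which is in fact slightly sharper than the claimed \eqref{eq:PDEEstimate2_1}; simply doubling both coefficients on the right-hand side weakens it into the stated form.

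There is no deep obstacle in this argument: the step that does the real work is the invocation of Korn's inequality, and the only points requiring care are the sign of the Lam\'e parameter $\lambda$ when the divergence term is dropped, and keeping the parameter $\delta$ generic so that it remains available for later tuning against the frequency $k$.
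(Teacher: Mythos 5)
Your proof is correct and follows exactly the paper's own route: the paper's proof of Lemma~\ref{lem:PDEEstimate2} is precisely ``combine \eqref{eq:PDEEstimate1_1} and \eqref{eq:Korn},'' which is what you carry out, with the details (multiplying Korn's inequality by $2\mu$, dropping the nonnegative $\lambda\,\E(\|\ddiv\bfu\|^2_{L^2(D)})$ contribution, and taking $\delta_1=\delta$) filled in. Your observation that the resulting bound is sharper than the stated \eqref{eq:PDEEstimate2_1} by a factor of $2$ is also accurate; the lemma's constants are simply not tight, and weakening your estimate recovers the stated form.
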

\begin{proof}
	We obtain \eqref{eq:PDEEstimate2_1} by combining 
	\eqref{eq:PDEEstimate1_1} and \eqref{eq:Korn}.
\end{proof}

\begin{lemma} \label{lem:PDEEstimate3}
	Suppose that $\bfu \in \bL^2( \Om, \bH^2(D))$ solves 
	\eqref{Eq:WeakFormulation}--\eqref{Eq:WeakSesquilinearForm} with $k \geq 1$ and $0 \leq \veps < 1$. 
	Then there exists a positive constant $C$, independent of $\veps$ 
	and $k$ such that
	\begin{align} \label{eq:PDEEstimate3_1}
		&\E \big(\| \bfu \|^2_{H^2(D)} \big) \leq C \left(1 + (1+\veps)^2k^2 
		\right)^2 \E \big( \| \bfu \|^2_{L^2(D)} \big) + C \E \big( \| \bff \|
		^2_{L^2(D)} \big), \\
		&\E \big( \| \bnabla \bfu \|^2_{L^2(\pa D)} \big)  
		\label{eq:PDEEstimate3_2} 
		\leq C \left( k^3 \E \big( \| \bfu \|^2_{L^2(D)} \big) + \frac{1}{k} 
		\E \big( \| \bff \|^2_{L^2(D)} \big) \right).
	\end{align}
	\end{lemma}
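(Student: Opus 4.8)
The plan is to prove the two bounds in turn, obtaining the $H^2$ estimate \eqref{eq:PDEEstimate3_1} from elliptic regularity for the Lam\'e operator and then deducing the boundary-gradient bound \eqref{eq:PDEEstimate3_2} from it by a multiplicative trace inequality. Write $S := \E(\|\bfu\|^2_{L^2(D)})$ and $T := \E(\|\bff\|^2_{L^2(D)})$. The one structural fact I use repeatedly is that, since $\alpha = 1 + \veps\eta$ with $\|\eta(\om,\cdot)\|_{L^\infty(D)} \leq 1$ a.s., one has $\|\alpha^2\bfu\|_{L^2(D)} \leq (1+\veps)^2\|\bfu\|_{L^2(D)}$ for a.e. $\om$.

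First I would recast \eqref{Eq:ElasticPDE} as $\ddiv(\sigma(\bfu)) = -k^2\alpha^2\bfu - \bff$, so that for a.e. $\om$ the right-hand side lies in $\bL^2(D)$ with $\|\ddiv(\sigma(\bfu))\|_{L^2(D)} \leq k^2(1+\veps)^2\|\bfu\|_{L^2(D)} + \|\bff\|_{L^2(D)}$. The crucial point is that the differential operator and the boundary operator are now deterministic, so the $H^2$ a priori estimate for the Lam\'e system on the convex polygonal (or smooth) domain $D$, with the traction data $\sigma(\bfu)\bfnu = -\i k A\bfu$ from \eqref{Eq:ElasticBC}, holds with a constant $C$ independent of both $k$ and $\om$:
\begin{align*}
\|\bfu\|_{H^2(D)} \leq C\big(\|\ddiv(\sigma(\bfu))\|_{L^2(D)} + \|\sigma(\bfu)\bfnu\|_{H^{1/2}(\pa D)} + \|\bfu\|_{H^1(D)}\big).
\end{align*}
The trace theorem controls the boundary term by $\|\sigma(\bfu)\bfnu\|_{H^{1/2}(\pa D)} = k\|A\bfu\|_{H^{1/2}(\pa D)} \leq Ck\|\bfu\|_{H^1(D)}$. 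The decisive step for the sharp $\bff$-dependence is to square, take expectations, and insert the $H^1$ bound of Lemma \ref{lem:PDEEstimate2} with the \emph{optimized} choice $\delta = k^2$, which gives $\E(\|\bfu\|^2_{H^1(D)}) \leq C(k^2 S + k^{-2}T)$. With this choice the boundary contribution reads $k^2\E(\|\bfu\|^2_{H^1(D)}) \leq C(k^4 S + T)$, so the source enters with a $k$-independent constant instead of the naive $k^2 T$; together with $\E(\|\ddiv(\sigma(\bfu))\|^2_{L^2(D)}) \leq C(k^4(1+\veps)^4 S + T)$ and the lower-order $\E(\|\bfu\|^2_{H^1(D)})$ term this yields \eqref{eq:PDEEstimate3_1}.

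For \eqref{eq:PDEEstimate3_2} I would then apply the multiplicative trace inequality $\|w\|^2_{L^2(\pa D)} \leq C\|w\|_{L^2(D)}\|w\|_{H^1(D)}$ componentwise to $w = \bnabla\bfu$, obtaining $\|\bnabla\bfu\|^2_{L^2(\pa D)} \leq C\|\bfu\|_{H^1(D)}\|\bfu\|_{H^2(D)}$ for a.e. $\om$, and a Cauchy--Schwarz step over $\Om$ bounds $\E(\|\bnabla\bfu\|^2_{L^2(\pa D)})$ by $C\,[\E(\|\bfu\|^2_{H^1(D)})]^{1/2}[\E(\|\bfu\|^2_{H^2(D)})]^{1/2}$. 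Feeding in the sharp $H^1$ bound (of order $k S^{1/2} + k^{-1}T^{1/2}$) and \eqref{eq:PDEEstimate3_1} (of order $k^2 S^{1/2} + T^{1/2}$) produces a leading term $k^3 S$, and Young's inequality applied to the cross term $kS^{1/2}T^{1/2} = (k^{3/2}S^{1/2})(k^{-1/2}T^{1/2})$ absorbs the remainder into $k^3 S + k^{-1}T$, which is \eqref{eq:PDEEstimate3_2}.

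The main obstacle is the first ingredient: establishing the elastic $H^2$ a priori estimate with a $k$- and $\om$-independent constant on the corner domain $D$. This is the analytically heaviest step and would be imported from the deterministic theory of \cite{Cummings_Feng_06} (with the usual care at the polygonal corners), rather than derived here. A secondary but delicate point is the bookkeeping of the $k$-powers: the sharp constants in the $\bff$-terms ($C\,T$ in \eqref{eq:PDEEstimate3_1} and $C k^{-1}T$ in \eqref{eq:PDEEstimate3_2}) rely entirely on the choice $\delta = k^2$ in Lemma \ref{lem:PDEEstimate2} and on the matching Young weight in the trace step, and either imbalance degrades the scaling. I deliberately route \eqref{eq:PDEEstimate3_2} through the $H^2$ bound rather than through a Rellich identity such as \eqref{eq:Rellich2}, because the latter controls only the symmetric gradient $\bsnabla\bfu$ on $\pa D$ and would therefore require the conjectural boundary Korn inequality \eqref{eq:Stoch_Bound_Korn}, which I wish to avoid at this stage.
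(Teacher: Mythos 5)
Your proposal is correct and follows essentially the same route as the paper's proof: elliptic regularity for the Lam\'e system with the traction data $\sigma(\bfu)\bfnu=-\i kA\bfu$ controlled via the trace theorem, combined with the $H^1$ bound of Lemma \ref{lem:PDEEstimate2} at the optimized weight $\delta\sim k^2$, yields \eqref{eq:PDEEstimate3_1}, and the multiplicative trace inequality with $k$-balanced Young/Cauchy--Schwarz yields \eqref{eq:PDEEstimate3_2}. The only cosmetic difference is that the paper applies the Young-type splitting pointwise in $\om$ (weights $k$ and $1/k$) before taking expectations, whereas you take Cauchy--Schwarz over $\Om$ first and then balance the expectations; the bookkeeping is equivalent.
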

	\begin{proof}
		Regularity theory for elliptic problems 
		\cite{Grisvard_92,Ladyzenskaja_Solonnikov_Uralceva_68} implies for 
		a.e. $\om \in \Om$
		\begin{align*}
			\| \bfu(\om,\cdot) \|^2_{H^2(D)} & \leq C \Big( \| \bff(\om,
			\cdot) \|^2_{L^2(D)} + \big( (1+\veps)^2 k^2 \big)^2 \| \bfu(\om,
			\cdot) \|^2_{L^2(D)} \\
			& \qquad \qquad + C_A k^2 \| \bfu(\om,\cdot) \|
			^2_{H^{\frac{1}{2}}(\pa D)} + \| \bfu(\om,\cdot) \|^2_{L^2(D)} 
			\Big) \\
			&\leq C \Big( \| \bff(\om,\cdot) \|^2_{L^2(D)} + \big(1 + (1+
			\veps)^2 k^2 \big)^2 \| \bfu(\om,\cdot) \|^2_{L^2(D)} \\ 
			&  \qquad \qquad + k^2 \| \bfu(\om,\cdot) \|^2_{H^{1}
			(D)}\Big).
		\end{align*}
		Taking the expectation on both sides and 
		using Lemma \ref{lem:PDEEstimate2} yield
		\begin{align*}
			\E \big(\| \bfu \|^2_{H^2(D)} \big) &  \leq C \Big( \E \big(\| 
			\bff \|^2_{L^2(D)} \big)+ \big(1 + (1+\veps)^2 k^2 \big)^2 \E 
			\big( \| \bfu \|^2_{L^2(D)} \big) \Big) \\
			& \qquad + \frac{C k^2}{\mu K} \left( \left( k^2 (1 
			+ \veps)^2 + \delta + 2 \mu \right) \E \big( \| \bfu \|
			^2_{L^2(D)} \big) + \frac{1}{4 \delta} \big( \| \bff \|
			^2_{L^2(D)} \big) \right).
		\end{align*}
		Hence \eqref{eq:PDEEstimate3_1} holds with $\delta = (1+\veps)^2 k^2$.
		
		To prove \eqref{eq:PDEEstimate3_2}, we note that $\pa D$ is piecewise 
		smooth.  Thus, by the trace inequality, \eqref{eq:PDEEstimate2_1}, 
		and \eqref{eq:PDEEstimate3_1} the following inequalities hold
		\begin{align*}
		\E \big( \| \bnabla \bfu \|^2_{L^2(\pa D)} \big) & \leq C \int_\Om \| 
		\bnabla \bfu \|_{L^2(D)} \| \bfu \|_{H^2(D)} \, dP \\
		& \leq C k \E \big( \| \bnabla \bfu \|^2_{L^2(D)} \big) + \frac{C}{k} 
		\E \big( \| \bfu \|^2_{H^2(D)} \big) \\
		& \leq \frac{C k}{\mu K} \left( \left( (1 + \veps)^2 k^2 + \delta + 
		2\mu \right) \E \big( \| \bfu \|_{L^2(D)}^2 \big)  +  \frac{1}{4 
		\delta}  \E\big( \| \bff \|^2_{L^2(D)} \big) \right) \\
		& \qquad + \frac{C}{k} \Big( \big( 1 + (1 + \veps)^2 k^2 \big)^2 \E 
		\big( \| \bfu \|^2_{L^2(D)} \big) + \E \big( \| \bff \|^2_{L^2(D)} 
		\big) \Big).
		\end{align*}
		Letting $\delta = (1+\veps)^2 k^2$ in the above inequality yields
		\begin{align*}
			\E \big( \| \bnabla \bfu \|^2_{L^2(\pa D)} \big) & \leq C \left( 
			k \left(1 + (1 + \veps)^2 k^2 \right) + \frac{1}{k} \left(1 + (1+ 
			\veps)^2 k^2 \right)^2 \right) \E \big( \| \bfu \|^2_{L^2(D)} 
			\big) \\
			& \qquad + C \left( \frac{1}{(1+\veps)^2k} + \frac{1}{k} \right)
			\E \big( \| \bff \|^2_{L^2(D)} \big).
		\end{align*}
		By the assumptions $0 \leq \veps < 1$ and $k \geq 1$ we obtain
		\eqref{eq:PDEEstimate3_2}. The proof is complete.	
	\end{proof}
	
With these technical lemmas in place we are now ready to prove the main 
result for this section.

\begin{theorem} \label{thm:PDEEstimate}
Suppose that $\bfu \in \bL^2( \Om, \bV)$ solves 
\eqref{Eq:WeakFormulation}--\eqref{Eq:WeakSesquilinearForm} with $k \geq 1$. 
Further, let $D$ be a convex polygonal or a smooth domain and $R$ be the smallest 
number such that $D \subset B_R(\mathbf{0})$. Then the following estimates hold
\begin{align}\label{eq:PDEEstimate4_1}
	&\E \left( \| \bfu \|^2_{L^2(D)} + \| \bfu \|_{L^2(\pa D)}^2 + \frac{c_0}
	{k^2} | \bfu |^2_{1,\pa D} \right)\\
	& \hspace*{2cm} \leq C_0 \left(k^{\ta - 2} + \frac{1}
	{k^2} \right)^2 \E \big( \| \bff \|^2_{L^2(D)} \big), 
	\notag \\
	&\E \big( \| \bfu \|^2_{H^1(D)} \big) \leq C_0 \left( k^{\ta - 1} + 
	\frac{1}{k^2} \right)^2 \E \big( \| \bff \|^2_{L^2(D)} \big), 
	\label{eq:PDEEstimate4_2}
\end{align}
provided that $\veps (2 + \veps) < \gamma_0 := \min \left \{1, \frac{1}{4} 
\left( d-1 + \frac{3kR}{\mu K} + \frac{2k R}{K} + kR
\right)^{-1} \right \}$.  Here $C_0$ is a positive constant independent of $k$ and 
$\bfu$, and $\ta$ is defined by \eqref{eq:ta}.
Moreover, if $\bfu \in \bL^2(\Om, \bH^2(D))$ the following estimate 
also holds
\begin{align}
	\E \big( \| \bfu \|^2_{H^2(D)} \big) \leq C_0 \left( k^{\ta} + \frac{1}
	{k^2} \right)^2 \E \big( \| \bff \|^2_{L^2(D)} \big). 
	\label{eq:PDEEstimate4_3}
\end{align}
\end{theorem}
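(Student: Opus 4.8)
The plan is to establish the $L^2$ bound in \eqref{eq:PDEEstimate4_1} first, since the $H^1$ and $H^2$ estimates then follow immediately by feeding it into Lemmas \ref{lem:PDEEstimate2} and \ref{lem:PDEEstimate3}, while the boundary pieces of \eqref{eq:PDEEstimate4_1} drop out of the same computation. The engine is a stochastic Rellich--Morawetz (radial multiplier) argument. I would first assume $\bfu \in \bL^2(\Om, \bH^2(D))$ so that the multiplier $\bfv(\om,\bfx) := (\bnabla\bfu(\om,\bfx))\bfx$ is an admissible test function in \eqref{Eq:WeakFormulation}, and recover the general case $\bfu \in \bL^2(\Om,\bV)$ afterward by density, using that $\bV$ is exactly the space on which the Rellich identities of Lemma \ref{lem:Rellich} remain valid.

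Taking real parts in \eqref{Eq:WeakFormulation} with this $\bfv$, I rewrite the elastic volume terms by \eqref{eq:Rellich2}, and, after splitting $\alpha^2 = 1 + (\alpha^2-1)$, rewrite the zeroth-order term by \eqref{eq:Rellich1}. Combining the outcome with the energy identity obtained by taking $\bfv=\bfu$ in \eqref{eq:PDEEstimate1_3} — which I use to eliminate $\E(|\bfu|^2_{1,D})$ — the interior contributions collapse to a clean $k^2\,\E\big(\|\bfu\|^2_{L^2(D)}\big)$ plus the nonnegative boundary energy $\tfrac12\,\E\big(2\mu\langle\bfx\cdot\bfnu,|\ddiv\bfu|^2\rangle_{\pa D} + \lambda\langle\bfx\cdot\bfnu,|\bsnabla\bfu|^2\rangle_{\pa D}\big)$, which by the star-shape condition $\bfx\cdot\bfnu\geq c_0$ dominates a multiple of $|\bfu|^2_{1,\pa D}$. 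This yields a master identity in which $k^2\,\E(\|\bfu\|^2_{L^2(D)})$ and the boundary energy sit on the left, while the right collects $\tfrac{k^2}{2}\E\langle\bfx\cdot\bfnu,|\bfu|^2\rangle_{\pa D}$, the impedance term $-k\,\im\int_\Om\langle A\bfu,\bfv\rangle_{\pa D}\,dP$, the random perturbation $-k^2\re\int_\Om((\alpha^2-1)\bfu,\bfv)_D\,dP$, and the source pairings $\re\int_\Om(\bff,\bfv)_D\,dP$ and a multiple of $\re\int_\Om(\bff,\bfu)_D\,dP$.

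The crux is to bound each right-hand term so that the total coefficient of $\E(\|\bfu\|^2_{L^2(D)})$ stays strictly below $k^2$ and every boundary-gradient contribution is absorbed into the left-hand boundary energy. For $\tfrac{k^2}{2}\E\langle\bfx\cdot\bfnu,|\bfu|^2\rangle_{\pa D}$ I use $\bfx\cdot\bfnu\leq R$ together with \eqref{eq:PDEEstimate1_2}. The random term is handled via $|\alpha^2-1|\leq \veps(2+\veps)$ a.s. and $|\bfv|\leq R|\bnabla\bfu|$, Cauchy--Schwarz, and the stochastic Korn inequality \eqref{eq:Korn} with Lemma \ref{lem:PDEEstimate2} to reinsert $\|\bnabla\bfu\|_{L^2(D)}$; the hypothesis $\veps(2+\veps)<\gamma_0$ is precisely what renders its coefficient absorbable, and tracking these constants is what pins down the explicit form of $\gamma_0$. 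The impedance term is where $\ta$ is born: after Cauchy--Schwarz it reduces to controlling $\|\bnabla\bfu\|_{L^2(\pa D)}$, and if $\bfu\in\bV_{\tilde{K}}$ the boundary Korn inequality \eqref{eq:Stoch_Bound_Korn} bounds this trace by the already-available boundary energy, costing one power of $k$ and giving $\ta=1$, whereas otherwise one falls back on the $H^2$ trace bound \eqref{eq:PDEEstimate3_2}, costing an extra power and giving $\ta=2$. Applying Young's inequality throughout with parameters tuned to the left-hand coefficients and then dividing by $k^2$ delivers the first estimate in \eqref{eq:PDEEstimate4_1}, with the boundary-norm pieces supplied by \eqref{eq:PDEEstimate1_2} and the retained boundary energy.

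Finally, \eqref{eq:PDEEstimate4_2} follows by inserting the established $L^2$ bound into Lemma \ref{lem:PDEEstimate2}, and \eqref{eq:PDEEstimate4_3} by inserting it into \eqref{eq:PDEEstimate3_1}; existence and uniqueness then follow from the Fredholm alternative, since the a priori estimate forces the only solution with $\bff=\mathbf{0}$ to vanish. I expect the main obstacle to be the simultaneous bookkeeping in the master identity — keeping the impedance and random-perturbation coefficients below the threshold set by $k^2$ and $c_0$ while routing all boundary-gradient energy to the left-hand side — since this is exactly where both the dichotomy in $\ta$ and the explicit constant $\gamma_0$ must be extracted.
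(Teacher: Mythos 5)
Your proposal follows essentially the same route as the paper's proof: the radial multiplier $\bfv = (\bnabla\bfu)\bfx$ combined with the stochastic Rellich identities of Lemma \ref{lem:Rellich} and the energy identity from Lemma \ref{lem:PDEEstimate1}, the identical dichotomy producing $\ta$ (the boundary Korn inequality \eqref{eq:Stoch_Bound_Korn} when $\bfu \in \bV_{\tilde{K}}$ versus the $H^2$ trace bound \eqref{eq:PDEEstimate3_2} otherwise), and the same derivation of \eqref{eq:PDEEstimate4_2} and \eqref{eq:PDEEstimate4_3} by feeding the $L^2$ bound into Lemmas \ref{lem:PDEEstimate2} and \ref{lem:PDEEstimate3}. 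The only addition is your explicit density argument justifying the multiplier for $\bfu \in \bL^2(\Om,\bV)$ rather than $\bL^2(\Om,\bH^2(D))$, a regularity point the paper passes over silently.
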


\begin{proof}
{\em Step 1:} We  begin by proving \eqref{eq:PDEEstimate4_1}.
Let $\bfv = (\bnabla \bfu) \bfx$ in \eqref{Eq:WeakFormulation}.  By taking 
the real part and rearranging terms we find
\begin{align*}
	& \re \int_\Omega \Big( 2 \mu \big( \bsnabla\bfu, \bsnabla\bfv \Big)_D + 
	\lambda 	\big( \ddiv \bfu, \ddiv \bfv \big)_D - k^2 \big( \bfu, \bfv 
	\big)_{D} 	\Big) \, dP \\
	& \qquad \leq \re \int_\Omega \Big( k^2 \veps \big( \eta (2 + \veps \eta)
	\bfu, \bfv \big)_D + \big( \bff, \bfv \big)_D \Big) \, dP + \im \int_
	\Omega k \big \langle A \bfu, \bfv \big \rangle_{\pa D} \, dP.
\end{align*}
To this identity we apply the Rellich identities in \eqref{eq:Rellich1} and 
\eqref{eq:Rellich2},  after another rearrangement of terms we get
\begin{align*}
	& \frac{d k^2}{2} \E \big( \| \bfu \|^2_{L^2(D)} \big) \\ 
	& \qquad \leq \frac{1}{2} \int_\Om \Big( k^2 \big \langle \bfx \cdot 
	\bfnu, | \bfu |^2 \big \rangle_{\pa \Om} - 2 \mu \big \langle \bfx \cdot 
	\bfnu, |\bsnabla\bfu|^2 \big \rangle_{\pa \Om} - \lambda \big \langle \bfx 
	\cdot \bfnu, | \ddiv \bfu |^2 \big \rangle_{\pa \Om} \Big) \, dP \\
	& \qquad \qquad + \frac{d - 2}{2} \E \big( | \bfu |^2_{1,D} \big) +  \re 
	\int_\Om \Big( k^2 \veps \big( \eta(2 + \veps \eta) \bfu, \bfv \big)_D + 
	\big( \bff, \bfv \big)_D \Big) \, dP \\
	& \qquad \qquad + \im \int_\Omega k\big \langle  A \bfu, 
	\bfv \big \rangle_{\pa D} \, dP. \\
\end{align*}
Using the fact that $D$ is star-shaped and $D \subset B_R(\mathbf{0})$ 
along with the Cauchy-Schwarz inequality we obtain
\begin{align} \label{eq:PDEEstimate4_aa}
	& \frac{d k^2}{2} \E \big( \| \bfu \|^2_{L^2(D)} \big) \\
	& \qquad \leq \frac{k^2 R}{2} \E \big( \| \bfu \|^2_{L^2(\pa D)} \big) - 
	\frac{c_0}{2} \E \big( | \bfu |^2_{1,\pa D} \big) + \frac{d - 2}{2} \E 
	\big( | \bfu |^2_{1,D} \big) \notag \\
	& \qquad \qquad + k^2 R \veps (2 + \veps)  \left( \frac{1}{2 \delta_1} \E 
	\big( \| \bfu \|^2_{L^2(D)} \big) + \frac{\delta_1}{2} \E \big( \| 
	\bnabla \bfu \|^2_{L^2(D)} \big) \right) \notag \\
	& \qquad \qquad + \frac{R}{2 \delta_2} \E \big( \| \bff \|^2_{L^2(D)} 
	\big) + \frac{R \delta_2}{2} \E \big( \| \bnabla \bfu \|^2_{L^2(D)} \big) \notag \\
	& \qquad \qquad + \frac{k C_A R}{2 \delta_3} \E \big( \| \bfu \|
	^2_{L^2(\pa D)} \big) + \frac{k C_A R \delta_3}{2} \E \big( \| \bnabla 
	\bfu \|^2_{L^2(\pa D)} \big). \notag
\end{align}
From \eqref{eq:PDEEstimate1_1} we find 
\begin{align} \label{eq:PDEEstimate4_aaa}
	\E \big( | \bfu |^2_{1,D} \big) - k^2 \E \big( \| \bfu \|^2_{L^2(D)} \big) 
	\leq \big( k^2 \veps(2 + \veps) + \delta_4 \big) \| \bfu \|^2_{L^2(D)} + 
	\frac{1}{4 \delta_4} \| \bff \|^2_{L^2(D)}.
\end{align}
By adding $\frac{d-1}{2}$ times \eqref{eq:PDEEstimate4_aaa} to 
\eqref{eq:PDEEstimate4_aa}, grouping like terms and letting $\gamma := \veps(2+
\veps)$, we get
\begin{align}
	\label{eq:PDEEstimate4_a} & \frac{k^2}{2} \E \big( \| \bfu \|^2_{L^2(D)} 
	\big) + \frac{c_0}{2} \E \big( | \bfu |^2_{1,\pa D} \big) + \frac{1}{2} \E 
	\big( | \bfu |^2_{1,D} \big) \\
	& \qquad \leq \frac{k^2 \gamma R}{2 \delta_1} \E \big( \| \bfu \|^2_{L^2(D)} 
	\big) + \left( \frac{k^2 \gamma R \delta_1}{2} + \frac{R \delta_2}{2} 
	\right) \E \big( \| \bnabla \bfu \|^2_{L^2(D)} \big) \notag \\
	& \qquad \qquad + \left( \frac{k^2 R}{2} + \frac{k C_A R}{2 \delta_3} 
	\right) \E \big( \| \bfu \|^2_{L^2(\pa D)} \big) + \frac{k C_A \delta_3}{2} 
	\E \big( \| \bnabla \bfu \|^2_{L^2(\pa D)} \big) \notag \\
	& \qquad \qquad + \big( k^2 \gamma + \delta_4 \big) \| \bfu \|
	^2_{L^2(D)} + \frac{1}{4 \delta_4} \| \bff \|^2_{L^2(D)} + \frac{R}{2 
	\delta_2} \E \big( \| \bff \|^2_{L^2(D)} \big). \notag
\end{align}

\medskip
{\em Step 2:}
The source of the different values of $\ta$ in \eqref{eq:PDEEstimate4_1} comes 
from different treatments for the $\E \big( \| \bnabla \bfu \|^2_{L^2(\pa D)} \big)$ 
term.  In particular, if $\bfu \in \bV_{\tilde{K}}$ we apply 
\eqref{eq:Stoch_Bound_Korn} to control this term. Otherwise, we apply 
\eqref{eq:PDEEstimate3_2} to control this term.  We first prove 
\eqref{eq:PDEEstimate4_1} with $\ta = 2$.
Applying \eqref{eq:PDEEstimate1_2},
\eqref{eq:PDEEstimate2_1} and \eqref{eq:PDEEstimate3_2} to 
\eqref{eq:PDEEstimate4_a} yields.
\begin{align*}
	&  \frac{k^2}{2} \E \big( \| \bfu \|^2_{L^2(D)} 
	\big) + \frac{c_0}{2} \E \big( | \bfu |^2_{1,\pa D} \big) + \frac{1}{2} \E 
	\big( | \bfu |^2_{1,D} \big) \\
	&  \leq \frac{1}{\mu K} \left( \frac{k^2 \gamma R \delta_1}{2}+ \frac{R 
	\delta_2}{2} \right) \left( \big(k^2(1 + \gamma) + \delta_5 + 2\mu \big) 
	\E \big( \| \bfu \|^2_{L^2(D)} \big) + \frac{1}{2 \delta_5} \E \big( \| 
	\bff \|^2_{L^2(D)}\big) \right) \\
	& \qquad + \left( \frac{k^2 R}{2} + \frac{k C_A R}{2 \delta_3} \right) 
	\left( \frac{\delta_6}{c_A k} \E \big( \| \bfu \|^2_{L^2(D)} 
	\big)  + \frac{1}{4 \delta_6 c_A k}\E \big( \| \bff \|
	^2_{L^2(D)}\big) \Big) \right) \\
	& \qquad + \frac{C k C_A R \delta_3}{2} \left( k^3 \E \big( \| \bfu \|
	^2_{L^2(D)} \big) + \frac{1}{k} \E \big( \| \bff \|^2_{L^2(D)} \big) 
	\right) \\
	& \qquad + \frac{d-1}{2} \left( \big(\gamma k^2 + \delta_4 \big)\E 
	\big( \| \bfu \|^2_{L^2(D)} \big) + \frac{1}{4 \delta_4}\E \big( \| \bff 
	\|^2_{L^2(D)}\big) \right) \\
	& \qquad + \frac{k^2 \gamma R}{2 \delta_1} \E \big( \| \bfu \|^2_{L^2(D)} 
	\big) + \frac{R}{2 \delta_2} \E \big( \| \bff \|^2_{L^2(D)} \big).
\end{align*}
In order to apply \eqref{eq:PDEEstimate3_2} we require $\veps < 1$.  Since 
$ \gamma \leq 1$, it can be easily shown that $\veps \leq \frac{1}{2}$.  Thus,
\begin{align} \label{eq:PDEEstimate4_4}
	c_1 \E \big( \| \bfu \|^2_{L^2(D)} \big) + \frac{c_0}{2} \E \big( | \bfu 
	|^2_{1,\pa D} \big) + \frac{1}{2} \E \big( | \bfu |^2_{1,D} \big) \leq c_2 
	\E \big( \| \bff \|^2_{L^2(D)} \big),
\end{align}
where
\begin{align*}
	c_1 &:= \frac{k^2}{2} - \frac{d-1}{2} \big( \gamma k^2 + \delta_4 \big) - \frac{1}
	{\mu K} \left( \frac{k^2 \gamma R \delta_1}{2} + \frac{R \delta_2}{2} 
	\right) \big(k^2(1 + \gamma) + \delta_5 + 2\mu k^2 \big) \\
	& \qquad - \left( \frac{k^2 R}{2} + \frac{k C_A R}{2 \delta_3} \right) 
	\frac{\delta_6}{c_A k} - \frac{C C_A k^4 R \delta_3}{2} - \frac{k^2 
	\gamma R}{2 \delta_1}, \\
\end{align*}
and
\begin{align*}
	c_2 &:= \frac{1}{2 \mu K \delta_5} \left( \frac{k^2 \gamma R \delta_1}{2}
	+ \frac{R \delta_2}{2} \right) + \frac{1}{4 \delta_6 c_A k} 
	\left( \frac{k^2 R}{2} + \frac{k C_A R}{2 \delta_3} \right) \\
	& \qquad + \frac{C k C_A R \delta_3}{2k} + \frac{d-1}{8 
	\delta_4} + \frac{R}{2 \delta_2}.
\end{align*}
In the third term of $c_1$, we have used the fact that $k \geq 1$ to include a coefficient 
of $k^2$ to the $2\mu$ term.  This has been done to simplify constants later. Setting
\begin{align*}
	\begin{array}{l l l}
	\delta_1 = \frac{1}{2k}, & \delta_2 = \frac{\mu K}{16R(3 + 2 \mu)}, 
	& \delta_3 = \frac{1}{8C C_A k^2 R}, \\[.5cm]
	\delta_4 = \frac{1}{16(d-1)}, & \delta_5 = \frac{k^2}{2}, & \delta_6 = 
	\frac{c_A k^2}{8\big(kR + 8 C C_A^2 k^2 R^2 
	\big)},
	\end{array}
\end{align*}
and using the fact that $\gamma \leq 1$ yields
\begin{align*}
	c_1 = \frac{k^2}{2} - \frac{k^2}{2} \left( d-1 + \frac{3kR}{\mu K} + 
	\frac{2kR}{K} + kR \right) \gamma - \frac{k^2}{4}.
\end{align*}
Also using the fact that $\gamma \leq \frac{1}{4} \left( d-1 + \frac{3kR}{\mu K} 
+ \frac{2kR}{K} + kR \right)^{-1}$ yields $c_1 \geq 
\frac{k^2}{8}$.  It is easy to check that $c_2 \leq C \left(k^2 + \frac{1}
{k^2} \right)$. Therefore, \eqref{eq:PDEEstimate4_4} becomes
\begin{align*}
	\frac{k^2}{8} \E \big( \| \bfu \|^2_{L^2(D)} \big) + \frac{c_0}{2} \E 
	\big( | \bfu |^2_{1,\pa D} \big) \leq C \left(k^2 + \frac{1}{k^2} \right) 
	\E \big( \| \bff \|^2_{L^2(D)} \big).
\end{align*}
Multiplying both sides by $\frac{8}{k^2}$ and applying 
\eqref{eq:PDEEstimate1_2} with $\delta_2 = k$ implies 
\eqref{eq:PDEEstimate4_1} with $\ta = 2$.

\medskip
{\em Step 3:}
If $\bfu \in \bV_{\tilde{K}}$, we apply \eqref{eq:Korn} and obtain.
\begin{align*}
	&\frac{k C_A}{2} \delta_3 \E \big( \| \bnabla \bfu \|^2_{L^2(\pa D} \big) - 
	\frac{1}{4} \left( k^2 \E \big( \| \bfu \|^2_{L^2(D)} \big) + c_0 \E \big( 
	| \bfu |^2_{1,\pa D}\big) + \E \big( | \bfu |^2_{1,D} \big) \right) \\
	& \qquad \leq \frac{k C_A}{2} \delta_3 \E \big( \| \bnabla \bfu \|
	^2_{L^2(\pa D} \big) - \frac{1}{4} \min \{ k^2K, 2 \mu K, c_0 \lambda, 2c_0 
	\mu \} \\
	& \qquad \qquad \cdot \Big( \E \big( \| \bnabla \bfu \|^2_{L^2(D)} \big) + 
	\E \big( \| \bsnabla \bfu \|^2_{L^2(\pa D)}\big) + \E \big( \| \ddiv \bfu 
	\|^2_{L^2(\pa D)} \big) \Big).
\end{align*} 
By choosing $\delta_3 = \frac{1}{2kC_A} \min \{ K, 2 \mu K , c_0 \lambda, 2c_0 
\mu \}$, using the fact that $k \geq 1$, and applying 
\eqref{eq:Stoch_Bound_Korn} we find
\begin{align*}
	\frac{k C_A}{2} \delta_3 \E \big( \| \bnabla \bfu \|^2_{L^2(\pa D} \big) - 
	\frac{1}{4} \left( k^2 \E \big( \| \bfu \|^2_{L^2(D)} \big) + c_0 \E \big( 
	| \bfu |^2_{1,\pa D}\big) + \E \big( | \bfu |^2_{1,D} \big) \right) \leq 0.
\end{align*}
We apply this inequality to \eqref{eq:PDEEstimate4_a} as well as 
\eqref{eq:PDEEstimate1_2} and \eqref{eq:PDEEstimate2_1} in order to find
\begin{align*}
	& \frac{k^2}{4} \E \big( \| \bfu \|^2_{L^2(D)} \big) + \frac{c_0}{4} \E 
	\big( | \bfu |^2_{1,\pa D} \big) + \frac{1}{4} \E \big( | \bfu |^2_{1,D} 
	\big) \\
	&  \leq \frac{1}{\mu K} \left( \frac{k^2 \gamma R \delta_1}{2}+ \frac{R 
	\delta_2}{2} \right) \left( \big(k^2(1 + \gamma) + \delta_5 + 2\mu \big) 
	\E \big( \| \bfu \|^2_{L^2(D)} \big) + \frac{1}{2 \delta_5} \E \big( \| 
	\bff \|^2_{L^2(D)}\big) \right) \\
	& \qquad + \left( \frac{k^2 R}{2} + \frac{k C_A R}{2 \delta_3}\right) 
	\left( \frac{\delta_6}{c_A k} \E \big( \| \bfu 
	\|^2_{L^2(D)} \big)  + \frac{1}{4 \delta_6 c_A k}\E \big( \| \bff \|
	^2_{L^2(D)}\big) \Big) \right) \\
	& \qquad + \frac{d-1}{2} \left( \big(\gamma k^2 + \delta_4 \big)\E 
	\big( \| \bfu \|^2_{L^2(D)} \big) + \frac{1}{4 \delta_4}\E \big( \| \bff 
	\|^2_{L^2(D)}\big) \right) \\
	& \qquad + \frac{k^2 \gamma R}{2 \delta_1} \E \big( \| \bfu \|^2_{L^2(D)} 
	\big) + \frac{R}{2 \delta_2} \E \big( \| \bff \|^2_{L^2(D)} \big).
\end{align*}
Thus,
\begin{align} \label{eq:PDEEstimate4_4a}
	c_1 \E \big( \| \bfu \|^2_{L^2(D)} \big) + \frac{c_0}{4} \E 
	\big( | \bfu |^2_{1,\pa D} \big) + \frac{1}{4} \E \big( | \bfu |^2_{1,D} \big)
	\leq c_2 \E \big( \| \bff \|^2_{L^2(D)} \big),
\end{align}
where
\begin{align*}
	c_1 &:= \frac{k^2}{4} - \frac{d-1}{2} \big( \gamma k^2 + \delta_4 \big) - \frac{1}
	{\mu K} \left( \frac{k^2 \gamma R \delta_1}{2} + \frac{R \delta_2}{2} 
	\right) \big(k^2(1 + \gamma) + \delta_5 + 2\mu k^2 \big) \\
	& \qquad - \left( \frac{k^2 R}{2} + \frac{k C_A R}{2 \delta_3}\right) 
	\frac{\delta_6}{c_A k} - \frac{k^2 \gamma R}{2 
	\delta_1}, \\
	c_2 &:= \frac{1}{2 \mu K \delta_5} \left( \frac{k^2 \gamma R \delta_1}{2}
	+ \frac{R \delta_2}{2} \right) + \frac{1}{4 \delta_6 c_A k} 
	\left( \frac{k^2 R}{2} + \frac{k C_A R}{2 \delta_3} \right) \\
	&\qquad \qquad + \frac{d-1}{8 
	\delta_4} + \frac{R}{2 \delta_2}.
\end{align*}
Setting
\begin{align*}
	\begin{array}{l l l}
	\delta_1 = \frac{1}{k}, & \delta_2 = \frac{\mu K}{16R(3 + 2 \mu)}, 
	& \delta_3 = \frac{1}{2kC_A} \min \{ K, 2 \mu K , c_0 \lambda, 2c_0 \mu \}, \\[.5cm]
	\delta_4 = \frac{1}{16(d-1)}, & \delta_5 = k^2, & \delta_6 = \frac{c_A 
	\delta_3 k^2}{16\big( kR \delta_3 + C_A R \big)},
	\end{array}
\end{align*}
and using the fact that $\gamma \leq 1$, which implies $\veps \leq \frac{1}{2}
$, yields
\begin{align*}
	c_1 = \frac{k^2}{4} - \frac{k^2}{2} \left( d-1 + \frac{3kR}{\mu K} + 
	\frac{2kR}{K} + kR \right) \gamma - \frac{3k^2}{32}.
\end{align*}
Also using the fact that $\gamma \leq \frac{1}{4} \left( d-1 + \frac{3kR}{\mu K} 
+ \frac{2kR}{K} + kR \right)^{-1}$ yields $c_1 \geq 
\frac{k^2}{8}$.  It is easy to check that $c_2 \leq C \left(1 + \frac{1}
{k^2} \right)$. Therefore, \eqref{eq:PDEEstimate4_4} becomes
\begin{align*}
	\frac{k^2}{8} \E \big( \| \bfu \|^2_{L^2(D)} \big) + \frac{c_0}{4} \E 
	\big( | \bfu |^2_{1,\pa D} \big) \leq C \left(1 + \frac{1}{k^2} \right) 
	\E \big( \| \bff \|^2_{L^2(D)} \big).
\end{align*}
Multiplying both sides by $\frac{8}{k^2}$ and applying 
\eqref{eq:PDEEstimate1_2} with $\delta_2 = k$ implies 
\eqref{eq:PDEEstimate4_1} with $\ta = 1$.  

\medskip
{\em Step 4:}
Now we prove \eqref{eq:PDEEstimate4_2} and \eqref{eq:PDEEstimate4_3}.
By \eqref{eq:Korn}, \eqref{eq:PDEEstimate1_1} with $\delta_1 = k^2$, and 
\eqref{eq:PDEEstimate4_1} the following holds.
\begin{align*}
	K \E \big( \| \bfu \|^2_{H^1(D)} \big) &\leq C \Big( \E \big( \| \bfu \|
	^2_{L^2(D)} \big) + \E \big( | \bfu |^2_{1,D} \big) \Big) \\
	& \leq C \left( \big(1 + (1+\veps)^2k^2 \big) \E \big( \| \bfu \|
	^2_{L^2(D)} \big) + \frac{1}{k^2} \E \big( \| \bff \|^2_{L^2(D)} \big) 
	\right) \\
	& \leq C \left( \big(1 + k^2 \big) \left(k^{\ta - 2} + \frac{1}{k^2} 
	\right)^2 + \frac{1}{k^2} \right) \E \big( \| \bff \|^2_{L^2(D)} \big) \\
	& \leq C \left(k^{\ta - 1} + \frac{1}{k^2} \right)^2 \E \big( \| \bff \|
	^2_{L^2(D)} \big).
\end{align*}
Here we have used the fact that $\veps \leq \frac{1}{2}$. Thus, 
\eqref{eq:PDEEstimate4_2} holds.

By \eqref{eq:PDEEstimate3_1} and \eqref{eq:PDEEstimate4_1} we find
\begin{align*}
	\E \big( \| \bfu \|^2_{H^2(D)} \big) & \leq C \big(1 + k^4 \big) \E \big( 
	\| \bfu \|^2_{L^2(D)} \big) + C \E \big( \| \bff \|^2_{L^2(D)} \big) \\
	& \leq C \big(1 + k^4 \big) \left(k^{\ta - 2} + \frac{1}{k^2} \right)^2 \E 
	\big( \| \bff \|^2_{L^2(D)} \big) \\
	& \leq C \left(k^{\ta} + \frac{1}{k^2} \right)^2 \E \big( \| \bff \|
	^2_{L^2(D)} \big).
\end{align*}
Thus \eqref{eq:PDEEstimate4_3} holds.
\end{proof}

\begin{remark} \label{rem:boundary_korn}
The Korn-type inequality on the boundary \eqref{eq:Stoch_Bound_Korn} was needed 
to obtain estimates that are optimal in the frequency $k$.  This is one key 
difference between the scalar Helmholtz problem and elastic Helmholtz problem.  
The parameter $\ta$ introduced in these solution estimates plays a key role in 
the analysis presented throughout the rest of the paper.
\end{remark}

\begin{theorem}
\label{thm:PDEExistence}
Let $\bff \in \bL^2( \Om, \bL^2(D))$.  For each fixed pair of positive 
numbers $k \geq 1$ and $\veps$ satisfying $\veps(2+\veps) < \gamma_0$, there 
exists a unique solution $\bfu \in \bL^2\big( \Om, \bV \big)$ 
to problem \eqref{Eq:WeakFormulation}--\eqref{Eq:WeakSesquilinearForm}.
\end{theorem}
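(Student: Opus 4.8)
The plan is to read off uniqueness directly from the a priori bound of Theorem~\ref{thm:PDEEstimate}, and to obtain existence by a Fredholm alternative argument performed pointwise in $\om$. The key preliminary observation is that the integrated identity \eqref{Eq:WeakFormulation} is equivalent to the family of deterministic variational problems
\begin{align*}
a\big(\bfu(\om,\cdot),\bfw\big)=\big(\bff(\om,\cdot),\bfw\big)_D\qquad\forall\,\bfw\in\bH^1(D),
\end{align*}
holding for a.e.\ $\om\in\Om$. This follows by inserting test functions of the form $\bfv(\om,\cdot)=\chi_E(\om)\,\bfw$ for an arbitrary measurable $E\subset\Om$ and fixed $\bfw\in\bH^1(D)$, which forces the integrand to vanish a.e. Hence it suffices to solve the frozen-$\om$ problem and then reassemble a measurable, square-integrable solution.

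For uniqueness, suppose $\bfu\in\bL^2(\Om,\bV)$ solves \eqref{Eq:WeakFormulation}--\eqref{Eq:WeakSesquilinearForm} with $\bff=\mathbf{0}$. Since $\veps(2+\veps)<\gamma_0$, Theorem~\ref{thm:PDEEstimate} applies, and \eqref{eq:PDEEstimate4_1} gives $\E\big(\|\bfu\|^2_{L^2(D)}\big)\le0$, whence $\bfu=\mathbf{0}$ in $\bL^2(\Om,\bL^2(D))$; \eqref{eq:PDEEstimate4_2} then forces $\bfu=\mathbf{0}$ in $\bL^2(\Om,\bH^1(D))$. Applying this to the difference of two solutions for the same data yields uniqueness.

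For existence I would fix $\om$ and split $a(\cdot,\cdot)$ into a coercive part minus a compact perturbation. Setting
\begin{align*}
a_1(\bfw,\bfv):=2\mu(\bsnabla\bfw,\bsnabla\bfv)_D+\lambda(\ddiv\bfw,\ddiv\bfv)_D+(\bfw,\bfv)_D+\i k\langle A\bfw,\bfv\rangle_{\pa D},
\end{align*}
one has $a(\bfw,\bfv)=a_1(\bfw,\bfv)-\big((k^2\alpha^2(\om,\cdot)+1)\bfw,\bfv\big)_D$. Korn's second inequality (Lemma~\ref{lem:Korn} at fixed $\om$) gives $\re a_1(\bfw,\bfw)\ge c\,\|\bfw\|^2_{H^1(D)}$, so $a_1$ is bounded and coercive on $\bH^1(D)$ and Lax--Milgram yields an isomorphism $\cA_1:\bH^1(D)\to\bH^1(D)^*$. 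The perturbation $\bfw\mapsto(k^2\alpha^2(\om,\cdot)+1)\bfw$ sends $\bH^1(D)$ boundedly into $\bL^2(D)$, and because the embedding $\bH^1(D)\hookrightarrow\bL^2(D)$ is compact, the induced operator on $\bH^1(D)$ is a compact perturbation of the identity. The frozen problem thus has Fredholm index zero, so existence is equivalent to injectivity; injectivity is the $\om$-pointwise version of the vanishing argument above, obtained by rerunning Lemmas~\ref{lem:PDEEstimate1}--\ref{lem:PDEEstimate3} and Theorem~\ref{thm:PDEEstimate} for a single $\om$ (the constants enter only through $\|\eta(\om,\cdot)\|_{L^\infty(D)}\le1$). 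This produces, for a.e.\ $\om$, a unique $\bfu(\om,\cdot)\in\bH^1(D)$, which elliptic regularity on the convex polygonal or smooth domain $D$ (as in Lemma~\ref{lem:PDEEstimate3}) upgrades to $\bfu(\om,\cdot)\in\bV$.

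It remains to check that $\om\mapsto\bfu(\om,\cdot)$ is strongly measurable and square integrable into $\bV$. Measurability follows since the frozen solution map depends continuously (indeed Lipschitz) on $\alpha^2(\om,\cdot)\in L^\infty(D)$—estimate the difference of two frozen solutions via the same a priori bound—while $\om\mapsto\alpha^2(\om,\cdot)$ is measurable by hypothesis; a Pettis/Galerkin argument makes this rigorous. Square integrability is immediate from the frequency-explicit bounds: integrating the pointwise estimate $\|\bfu(\om,\cdot)\|^2_{\bV}\le C\|\bff(\om,\cdot)\|^2_{L^2(D)}$ against $dP$ and using $\bff\in\bL^2(\Om,\bL^2(D))$ gives $\E(\|\bfu\|^2_{\bV})<\infty$. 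I expect the principal obstacle to be exactly the inseparability of the stochastic and spatial compactness: one cannot apply the Fredholm alternative directly in the Bochner space $\bL^2(\Om,\bH^1(D))$, since the embedding $\bL^2(\Om,\bH^1(D))\hookrightarrow\bL^2(\Om,\bL^2(D))$ is \emph{not} compact (there is no compactness in the $\om$-variable). Freezing $\om$ to recover the compact spatial embedding, and then treating measurability and integrability separately, is precisely the device that sidesteps this difficulty.
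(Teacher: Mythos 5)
Your proof is correct, and it takes a genuinely different route from the paper's. The paper applies the Fredholm Alternative directly on the Bochner space $\bL^2(\Om,\bH^1(D))$: it asserts a G\"{a}rding inequality there, notes that the adjoint sesquilinear form differs from $a(\cdot,\cdot)$ only in the sign of the boundary term so that the estimates of Theorem~\ref{thm:PDEEstimate} carry over and give uniqueness for the adjoint problem, and then concludes existence and uniqueness in one stroke. You instead reduce \eqref{Eq:WeakFormulation} to the family of frozen-$\om$ deterministic problems (via the test functions $\chi_E(\om)\bfw$), run the coercive-plus-compact Fredholm argument for each fixed $\om$---where the compact embedding $\bH^1(D)\hookrightarrow\bL^2(D)$ genuinely is available---and then reassemble a measurable, square-integrable solution using Lipschitz dependence of the frozen solution operator on $\alpha^2(\om,\cdot)$ together with the $\om$-uniform a priori bounds. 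Your closing remark identifies the substantive difference: the G\"{a}rding-plus-Fredholm mechanism needs a compact embedding, and $\bL^2(\Om,\bH^1(D))\hookrightarrow\bL^2(\Om,\bL^2(D))$ is \emph{not} compact when $(\Om,\mathcal{F},P)$ is an infinite probability space (multiply a fixed $H^1(D)$ function by an orthonormal sequence in $L^2(\Om)$: it converges weakly to zero but not strongly in $\bL^2(\Om,\bL^2(D))$). So the paper's one-sentence invocation of Fredholm glosses over exactly the point your pointwise construction resolves; what the paper's route buys is brevity, and what yours buys is an argument whose compactness hypothesis is actually true, at the price of the measurability/integrability bookkeeping. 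Two details to tighten: the reduction to the frozen problems needs a countable dense family of test functions $\bfw\in\bH^1(D)$ to produce a single exceptional null set before extending by continuity, and in the pointwise injectivity step you should appeal to the deterministic estimates of \cite{Cummings_Feng_06} (equivalently, the $\om$-pointwise identities established before integration over $\Om$ in the proofs of Lemmas~\ref{lem:PDEEstimate1}--\ref{lem:PDEEstimate3} and Theorem~\ref{thm:PDEEstimate}), taking $\ta=2$ since a pointwise boundary Korn inequality is not available; any value of $\ta$ suffices for uniqueness.
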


\begin{proof}
The proof is based on the well-known Fredholm Alternative Principle (cf. 
\cite{Gilbarg_Trudinger_01}).  First, it is easy to check that the 
sesquilinear form in \eqref{Eq:WeakSesquilinearForm} satisfies a G\"{a}rding's 
inequality on $\bL^2\big(\Om,\bH^1(D)\big)$.  Second, to 
apply the Fredholm Alternative Principle we need to prove that the solution 
to the adjoint problem of 
\eqref{Eq:WeakFormulation}--\eqref{Eq:WeakSesquilinearForm} is unique.  It is 
easy to verify that the adjoint problem has an associated sesquilinear form 
\begin{align*}
	\widehat{a} (w,v) &:= 2\mu (\bsnabla\bfw,\bsnabla\bfv)_D + \lambda 
	(\ddiv \bfw, \ddiv \bfv)_D - k^2( \alpha^2 \bfw, \bfv)_D  
        - \i k \langle \alpha A \bfw, \bfv \rangle_{\pa D}.
\end{align*}
Note that $\widehat{a}(\cdot,\cdot)$ and $a(\cdot,\cdot)$ differ only in 
the sign of the last term.  As a result, all the solution estimates for 
problem \eqref{Eq:WeakFormulation}--\eqref{Eq:WeakSesquilinearForm} still 
hold for its adjoint problem.  Since the adjoint problem is linear, 
solution estimates immediately imply uniqueness.  Thus, the Fredholm 
Alternative Principle implies 
\eqref{Eq:WeakFormulation}--\eqref{Eq:WeakSesquilinearForm} has a unique 
solution $\bfu \in \bL^2\big( \Om,\bH^1(D) \big)$, which infers 
$\bfu \in \bL^2\big( \Om,\bV \big)$ . The proof is complete.
\end{proof}

\section{Multi-modes representation of the solution and its finite modes 
approximations} \label{sec:Multi-Modes}

Following the procedure set forth in \cite{Feng_Lin_Lorton_15}, the first goal of 
this section is to introduce and analyze a multi-modes representation for the 
solution to problem \eqref{Eq:WeakFormulation}--\eqref{Eq:WeakSesquilinearForm} 
in the form of a power series in terms of the parameter $\veps$.  
This multi-modes representation is key to obtaining an efficient numerical 
algorithm for estimating $\E (\bfu )$.  The second goal of this section is 
to estimate the error associated with approximating the solution $\bfu$ by a finite term 
truncation of its multi-modes representation.  We use the term finite 
modes approximation to refer to this truncation.

Due to the linear nature of the elastic Helmholtz operator 
as well as its similarities to the scalar Helmholtz operator, most of the 
results in this section are obtained in similar fashion as their respective 
counterparts in \cite{Feng_Lin_Lorton_15}, with changes due to the inherent 
difficulty associated with the elastic Helmholtz operator.  Let $\bfu^\veps$ 
denote the solution to problem 
\eqref{Eq:WeakFormulation}--\eqref{Eq:WeakSesquilinearForm} and assume that it 
can be represented in the form
\begin{align} \label{eq:MultiModes}
	\bfu^{\veps} = \sum^\infty_{n = 0} \veps^n \bfu_n.
\end{align}
The validity of this expansion will be proved later.  Without loss of 
generality, we assume that $k \geq 1$ and $D \subset B_1 (\mathbf{0})$.  
Otherwise, the problem can be re-scaled to this regime by a suitable change of 
variable.  We note that this normalization implies 
$R = 1$ in Theorem \ref{thm:PDEEstimate}.

Substituting the above expansion into the elastic Helmholtz equation
\eqref{Eq:ElasticPDE} and matching the coefficients of $\veps^n$ order terms
for $n=0,1,2,\cdots$, we obtain
\begin{align}\label{eq3.2}
\bfu_{-1} &:\equiv \mathbf{0},\\
-\ddiv(\sigma(\bfu_0))- k^2 \bfu_0 &= \bff, \label{eq3.3} \\
-\ddiv(\sigma(\bfu_n))- k^2 \bfu_{n} &= 2k^2\eta \bfu_{n-1} 
+k^2\eta^2\bfu_{n-2},
\qquad \mbox{for } n\geq 1. \label{eq3.4}
\end{align}
Since $\alpha|_{\pa D} = 1$ a.s., then substituting $\bfu^\veps$ 
into \eqref{Eq:ElasticBC} and matching coefficients of $\veps^n$ order terms
for $n=0,1,2,\cdots$, yields the same absorbing boundary condition for each mode 
function $\bfu_n$.  Namely,
\begin{equation}\label{eq3.5}
\sigma(\bfu_n) \bfnu + \i k A \bfu_n = \mathbf{0}, \qquad\mbox{for } n\geq 0.
\end{equation}

We observe that all the mode functions satisfy the same type of ``nearly deterministic"
elastic Helmholtz equations with the same boundary condition. 
The only difference in the equations are found in the right-hand side source terms.
In particular, the source term in \eqref{eq3.3} comes from the source term of 
\eqref{Eq:ElasticPDE} while the source term in \eqref{eq3.4} involves a three-term
recursive relationship involving the mode functions and the random field $\eta$.
This important feature will be utilized in Section \ref{sec:Numerical_Procedure} 
to construct our overall numerical methodology for solving problem 
\eqref{Eq:ElasticPDE}--\eqref{Eq:ElasticBC}.

Next, we address the existence and uniqueness of each mode function $\bfu_n$.

\begin{theorem}\label{thm3.1}
Let $\bff\in \bL^2(\Ome, \bL^2(D))$. Then for each $n\geq 0$, there
exists a unique solution $\bfu_n \in \bL^2(\Om, \bH^1(D))$ (understood in
the sense of Definition \ref{Def:WeakSolution}) to problem \eqref{eq3.3},
\eqref{eq3.5} for $n=0$
and problem \eqref{eq3.4},\eqref{eq3.5} for $n\geq 1$. Moreover, 
for $n\geq 0$, $\bfu_n$ satisfies
\begin{align}\label{eq3.6}
	\E \Big( \| \bfu_n \|^2_{L^2(D)} &+ \| \bfu_n \|_{L^2(\pa D)}^2 + 
	\frac{c_0}{k^2} | \bfu_n |^2_{1,\pa D} \Big) \\
	 &\leq \left(k^{\ta - 2} + \frac{1}{k^2} \right)^2 C(n,k) \E \big( \| \bff 
	\|^2_{L^2(D)} \big), \notag \\
	\E \big( \| \bfu_n \|^2_{H^1(D)} \big) & \leq \left( k^{\ta - 1} + \frac{1}
	{k^2} \right)^2 C(n,k) \E \big( \| \bff \|^2_{L^2(D)} \big), 
	\label{eq3.7}
\end{align}
where
\begin{equation}\label{eq3.7a}
C(0,k):=C_0,\qquad C(n,k):= 4^{2n-1}C_0^{n+1} (1+k^{\ta})^{2n} \quad\mbox{for } 
n\geq 1,
\end{equation}
and $\ta$ is defined in \eqref{eq:ta}.
Moreover, if $\bfu_n \in \bL^2(\Ome, \bH^2(D))$, there also holds
\begin{align}\label{eq3.6a}
\E \big( \| \bfu_n \|^2_{H^2(D)} \big) \leq  \left(k^{\ta} + \frac{1}
	{k^2} \right)^2 C(n,k) \E \big( \| \bff \|^2_{L^2(D)} \big). 
\end{align}

\end{theorem}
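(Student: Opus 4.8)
The plan is to argue by induction on $n$, using at each stage the deterministic (i.e.\ $\veps=0$) theory already packaged in Theorems \ref{thm:PDEExistence} and \ref{thm:PDEEstimate}. The crucial structural observation is that every mode problem---\eqref{eq3.3},\eqref{eq3.5} for $n=0$ and \eqref{eq3.4},\eqref{eq3.5} for $n\ge1$---is \emph{exactly} the weak problem \eqref{Eq:WeakFormulation}--\eqref{Eq:WeakSesquilinearForm} with $\veps=0$ (so that $\alpha\equiv1$ and the wave number is the constant $k$); only the right-hand side changes from one mode to the next. Since $\veps=0$ trivially satisfies $\veps(2+\veps)=0<\gamma_0$, both cited theorems apply verbatim with $\veps=0$ to each mode, provided the corresponding source lies in $\bL^2(\Om,\bL^2(D))$. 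I will write $a_n$ for the left-hand side of \eqref{eq3.6}.

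For the base case $n=0$, the source of \eqref{eq3.3} is $\bff\in\bL^2(\Om,\bL^2(D))$ by hypothesis, so Theorem \ref{thm:PDEExistence} produces a unique $\bfu_0\in\bL^2(\Om,\bV)$ and Theorem \ref{thm:PDEEstimate} (with $\veps=0$) yields \eqref{eq3.6}, \eqref{eq3.7} (and \eqref{eq3.6a} under the extra $\bH^2$ hypothesis) with $C(0,k)=C_0$. For the inductive step I would assume $\bfu_{n-1},\bfu_{n-2}$ exist, are unique, and obey the stated bounds. Setting $\bff_n:=2k^2\eta\,\bfu_{n-1}+k^2\eta^2\bfu_{n-2}$ for the right-hand side of \eqref{eq3.4}, the a.s.\ bound $\|\eta(\om,\cdot)\|_{L^\infty(D)}\le1$ shows $\bff_n\in\bL^2(\Om,\bL^2(D))$, so Theorems \ref{thm:PDEExistence} and \ref{thm:PDEEstimate} again furnish a unique $\bfu_n\in\bL^2(\Om,\bV)$ obeying \eqref{eq:PDEEstimate4_1}--\eqref{eq:PDEEstimate4_3} with $\bff$ replaced by $\bff_n$.

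It then remains to propagate the constants. Using the triangle inequality, $\|\eta\|_{L^\infty}\le1$, and $(a+b)^2\le2a^2+2b^2$, I would estimate $\E(\|\bff_n\|^2_{L^2(D)})\le8k^4\bigl(\E\|\bfu_{n-1}\|^2_{L^2(D)}+\E\|\bfu_{n-2}\|^2_{L^2(D)}\bigr)\le8k^4(a_{n-1}+a_{n-2})$, with the sharper $4k^4\,a_0$ when $n=1$, since $\bfu_{-1}\equiv\mathbf0$. The arithmetic hinges on the identity $k^4\bigl(k^{\ta-2}+k^{-2}\bigr)^2=(1+k^{\ta})^2$, which collapses the $k$-powers coming from \eqref{eq:PDEEstimate4_1} against the $k^4$ from the source bound and leaves the clean factor $(1+k^{\ta})^2$. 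Feeding in the inductive bounds \eqref{eq3.6} for modes $n-1$ and $n-2$, and using $C(n-2,k)\le C(n-1,k)$ (valid because $16\,C_0(1+k^{\ta})^2\ge1$), I would reach $a_n\le16\,C_0(1+k^{\ta})^2\,C(n-1,k)\,(k^{\ta-2}+k^{-2})^2\,\E\|\bff\|^2_{L^2(D)}$; the relation $C(n,k)/C(n-1,k)=16\,C_0(1+k^{\ta})^2$ (for $n\ge2$; the factor is $4\,C_0(1+k^{\ta})^2$ in the special case $n=1$) is precisely the definition \eqref{eq3.7a}, closing the recursion for \eqref{eq3.6}. The same source bound, inserted into \eqref{eq:PDEEstimate4_2} and \eqref{eq:PDEEstimate4_3}, delivers \eqref{eq3.7} and \eqref{eq3.6a} with the identical constant $C(n,k)$.

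I expect the only real work to be the constant bookkeeping in this last step; there is no analytic obstacle, since the hard frequency-explicit estimates are already established in Theorem \ref{thm:PDEEstimate}. The task reduces to verifying that the recursively defined $C(n,k)$ in \eqref{eq3.7a} dominates the accumulated factors of $8$ (or $16$), $C_0$, and $(1+k^{\ta})^2$ at each step, and to treating $n=1$ separately because the $\bfu_{-1}$ term drops out, which is what produces the exponent pattern $4^{2n-1}C_0^{n+1}(1+k^{\ta})^{2n}$.
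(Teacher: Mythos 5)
Your proposal is correct and follows essentially the same route as the paper's own proof: induction on $n$, applying the $\veps=0$ (deterministic-coefficient) existence and frequency-explicit estimates of Theorems \ref{thm:PDEExistence} and \ref{thm:PDEEstimate} to each mode problem, bounding the recursive source via $\|\eta\|_{L^\infty(D)}\le 1$, collapsing powers of $k$ through the identity $k^4\bigl(k^{\ta-2}+k^{-2}\bigr)^2=(1+k^{\ta})^2$, and closing the recursion with $C(n,k)/C(n-1,k)=16\,C_0(1+k^{\ta})^2$ together with $C(n-2,k)\le C(n-1,k)$. If anything, your separate treatment of $n=1$ (using the sharper bound $4k^4$ since $\bfu_{-1}\equiv\mathbf{0}$ removes the need for the Cauchy--Schwarz splitting factor) is slightly more careful than the paper's generic display, which carries an unneeded factor of $2$ in that case.
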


\begin{proof}
The proof for this theorem mimics that of Theorem 3.1 from 
\cite{Feng_Lin_Lorton_15} with only minor changes necessary.  In particular, the 
dependencies on $k$ in the solution estimates present in Theorem 
\ref{thm:PDEEstimate} are different than their respective counterparts in 
\cite{Feng_Lin_Lorton_15}. This in-turn changes the form of $C(n,k)$.  Also, in 
this paper we take $\eta \in \bL^2(\Om,\bL^\infty(D))$ ensuring that 
$\alpha|_{\pa D} = 1$ a.s.

For each $n\geq 0$, the PDE problem associated with $\bfu_n$ is the same type
of elastic Helmholtz problem as the original problem 
\eqref{Eq:ElasticPDE}--\eqref{Eq:ElasticBC} (with $\veps=0$ in the left-hand 
side of the PDE). Hence, all a priori estimates of Theorem \ref{thm:PDEEstimate} 
hold for each $\bfu_n$ (with its respective right-hand source function).  
First, we have
\begin{align}\label{eq3.8}
&\E\Bigl( \norm{\bfu_0}_{L^2(D)}^2 +\norm{\bfu_0}_{L^2(\p D)}^2 
+ \frac{c_0}{k^2} | \bfu_0 |_{1,\pa D}^2 \Bigr) 
\\
&\hspace*{2.1cm}\leq C_0 \Bigl( k^{\ta - 2} + \frac{1}{k^2} \Bigr)^2 \E(\norm{\bff}_{L^2(D)} ^2), \\ 
&\E(\norm{\bfu_0}_{H^1(D)}^2) \leq C_0 \Bigl(k^{\ta-1} +\frac{1}{k^2} \Bigr)^2
\E(\norm{\bff}_{L^2(D)}^2), \label{eq3.9} \\
&\E \big( \| \bfu_0 \|^2_{H^2(D)} \big) \leq C_0 \left( k^{\ta} + \frac{1}
{k^2} \right)^2 \E \big( \| \bff \|^2_{L^2(D)} \big). \label{eq3.9a}
\end{align}
Thus, \eqref{eq3.6}, \eqref{eq3.7}, and \eqref{eq3.6a} hold for $n=0$.

Next, we use induction to prove that \eqref{eq3.6}, \eqref{eq3.7}, \eqref{eq3.6a} 
for $n> 0$.  Assume that \eqref{eq3.6}, \eqref{eq3.7}, and \eqref{eq3.6a} hold for 
all $0\leq n\leq \ell-1$, then
\begin{align*}
\E\Bigl( &\norm{\bfu_\ell}_{L^2(D)}^2 + \norm{\bfu_\ell}_{L^2(\p D)}^2 
+ \frac{c_0}{k^2}| \bfu_\ell |_{1,\pa D}^2 \Bigr) \\
&\,
\leq 2C_0\Bigl( k^{\ta - 2} +\frac{1}{k^2} \Bigr)^2
\E\Bigl( \norm{2k^2\eta \bfu_{\ell-1} }_{L^2(D)}^2
+\overline{\delta}_{1\ell} \norm{k^2\eta^2 \bfu_{\ell-2}}_{L^2(D)}^2 \Bigr) \\
&\,
\leq 2C_0\Bigl( k^{\ta - 2} +\frac{1}{k^2} \Bigr)^2
\big(1+k^{\ta}\big)^2 \Bigl( 4C(\ell-1,k) +C(\ell-2,k) \Bigr) \E(\norm{\bff}
_{L^2(D)}^2)\\
&\,
\leq \Bigl( k^{\ta - 2} +\frac{1}{k^2} \Bigr)^2 \, 8 C_0 \big(1+k^{\ta}\big)^2 
C(\ell-1,k) \left( 1+ \frac{C(\ell-2,k)}{C(\ell-1,k)} \right) \E(\norm{\bff}
_{L^2(D)}^2)\\
&\,
\leq \Bigl( k^{\ta - 2}+\frac{1}{k^2} \Bigr)^2 C(\ell, k) \E(\norm{\bff}
_{L^2(D)}^2),
\end{align*}
where $\overline{\delta}_{1\ell}=1-\delta_{1\ell}$ and $\delta_{1\ell}$ denotes
the Kronecker delta, and we have used the fact that $k \geq 1$ and
\[
8C_0 \big(1+k^{\ta}\big)^2 C(\ell-1,k) \left( 1+ \frac{C(\ell-2,k)}{C(\ell-1,k)} 
\right) \leq C(\ell, k).
\]
Similarly, we have
\begin{align*}
&\E\bigl(\norm{\bfu_\ell}_{H^1(D)}^2 \bigr)
\leq 2C_0\Bigl( k^{\ta - 1} + \frac{1}{k^2} \Bigr)^2
\E\Bigl( \norm{2k^2\eta \bfu_{\ell-1} }_{L^2(D)}^2
+\overline{\delta}_{1\ell} \norm{k^2\eta^2 \bfu_{\ell-2}}_{L^2(D)}^2 \Bigr) \\
& \qquad \leq 2C_0\Bigl(k^{\ta - 1} +\frac{1}{k^2} \Bigr)^2
\big(1+k^{\ta}\big)^2 \Bigl( 4C(\ell-1,k)+C(\ell-2,k) \Bigr) \E\bigl(\norm{\bff}
_{L^2(D)}^2\bigr)\\
& \qquad \leq \Bigl( k^{\ta - 1} +\frac{1}{k^2} \Bigr)^2 C(\ell, k) \E\bigl(\norm{\bff}
_{L^2(D)}^2\bigr),\\
&\E\bigl(\norm{\bfu_n}_{H^2(D)}^2 \bigr) \leq 2C_0\Bigl(k^{\ta} +\frac{1}{k^2} 
\Bigr)^2 \E\Bigl( \norm{2k^2\eta \bfu_{n-1} }_{L^2(D)}^2
+\overline{\delta}_{1\ell} \norm{k^2\eta^2 \bfu_{n-2}}_{L^2(D)}^2 \Bigr) \\
& \qquad \leq 2 C_0\Bigl(k^{\ta} +\frac{1}{k^2} \Bigr)^2
\big(1+k^{\ta} \big)^2 \Bigl( 4C(n-1,k)+C(n-2,k) \Bigr) \E\bigl(\norm{\bff}
_{L^2(D)}^2\bigr)\\
& \qquad \leq  \Bigl(k^{\ta}+\frac{1}{k^2} \Bigr)^2 C(n, k) 
\E\bigl(\norm{\bff}_{L^2(D)}^2\bigr).
\end{align*}
Hence, \eqref{eq3.6}, \eqref{eq3.7}, and \eqref{eq3.6a} hold for $n=\ell$. Thus, 
the induction is complete.


With a priori estimates \eqref{eq3.6}, \eqref{eq3.7}, \eqref{eq3.6a} in hand,
the proof of existence and uniqueness for each $\bfu_n$ follows verbatim the 
proof of Theorem \ref{thm:PDEExistence}, which we leave to the interested reader 
to verify. The proof is complete.
\end{proof}

Now we are ready to justify the multi-modes representation \eqref{eq:MultiModes} 
for the solution $\bfu^\veps$ of problem 
\eqref{Eq:WeakFormulation}--\eqref{Eq:WeakSesquilinearForm}.

\begin{theorem}\label{thm3.2}
Let $\{\bfu_n\}$ be the same as in Theorem \ref{thm3.1}. Then
\eqref{eq:MultiModes} is valid in $\bL^2(\Ome, \bH^1(D))$ provided that
$c_\veps:=4\veps C_0^{\frac12}\big(1+k^{\ta}\big)<1$.
\end{theorem}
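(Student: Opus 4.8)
The plan is to establish \eqref{eq:MultiModes} by showing that the partial sums $\bfU_N := \sum_{n=0}^N \veps^n \bfu_n$ form a Cauchy sequence in the Banach space $\bL^2(\Om,\bH^1(D))$, and then to identify their limit $\bfU$ with the solution $\bfu^\veps$ via the uniqueness statement of Theorem \ref{thm:PDEExistence}. The convergence is driven entirely by the geometric growth of the a priori bounds from Theorem \ref{thm3.1}, while the identification rests on summing the weak formulations of the individual mode equations.

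First I would convert \eqref{eq3.7} and \eqref{eq3.7a} into a geometric bound for the norm of each scaled mode. Taking square roots in \eqref{eq3.7} gives, for $n \geq 1$,
\[
\veps^n \E\big(\|\bfu_n\|^2_{H^1(D)}\big)^{1/2} \leq \Big(k^{\ta-1}+\tfrac{1}{k^2}\Big)\,\veps^n\,C(n,k)^{1/2}\,\E\big(\|\bff\|^2_{L^2(D)}\big)^{1/2},
\]
and since $C(n,k)^{1/2} = 2^{2n-1}C_0^{(n+1)/2}(1+k^{\ta})^n$, the $n$-dependent factor collapses to
\[
\veps^n C(n,k)^{1/2} = \tfrac{1}{2}C_0^{1/2}\big(4\veps C_0^{1/2}(1+k^{\ta})\big)^n = \tfrac{1}{2}C_0^{1/2}\,c_\veps^{\,n}.
\]
Hence $\veps^n\|\bfu_n\|_{\bL^2(\Om,\bH^1(D))} \leq \tfrac12 C_0^{1/2}(k^{\ta-1}+k^{-2})\,c_\veps^{\,n}\,\|\bff\|_{\bL^2(\Om,\bL^2(D))}$, and the assumption $c_\veps < 1$ makes $\sum_n \veps^n\|\bfu_n\|_{\bL^2(\Om,\bH^1(D))}$ summable. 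Because $\bL^2(\Om,\bH^1(D))$ is complete, the series $\sum_n \veps^n\bfu_n$ converges absolutely to a limit $\bfU \in \bL^2(\Om,\bH^1(D))$.

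Next I would identify $\bfU$ as $\bfu^\veps$. Each $\bfu_n$ satisfies, in the sense of Definition \ref{Def:WeakSolution}, the weak form of \eqref{eq3.3}, \eqref{eq3.5} (for $n=0$) or \eqref{eq3.4}, \eqref{eq3.5} (for $n\geq 1$); denoting by $a_0$ the deterministic sesquilinear form obtained from \eqref{Eq:WeakSesquilinearForm} with $\alpha \equiv 1$, these read $\int_\Om a_0(\bfu_n,\bfv)\,dP = \int_\Om(\bfg_n,\bfv)_D\,dP$ with $\bfg_0 = \bff$ and $\bfg_n = 2k^2\eta\bfu_{n-1}+k^2\eta^2\bfu_{n-2}$. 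I would multiply the $n$-th identity by $\veps^n$ and sum over $n$. The left side passes to $\int_\Om a_0(\bfU,\bfv)\,dP$ by continuity of $a_0$ on $\bL^2(\Om,\bH^1(D))$ and absolute convergence. On the right, reindexing and using $\bfu_{-1}\equiv\mathbf{0}$ yields $\int_\Om(\bff,\bfv)_D\,dP + 2k^2\veps\int_\Om(\eta\bfU,\bfv)_D\,dP + k^2\veps^2\int_\Om(\eta^2\bfU,\bfv)_D\,dP$, where the interchange of sum and multiplication by $\eta$, $\eta^2$ is legitimate because $\|\eta\|_{L^\infty(D)}\leq 1$ a.s. makes these multiplications bounded on $\bL^2(\Om,\bL^2(D))$. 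Moving the $\eta$-terms to the left and invoking $\alpha^2 = 1 + 2\veps\eta + \veps^2\eta^2$ recombines $-k^2(\bfU,\bfv) - 2k^2\veps(\eta\bfU,\bfv) - k^2\veps^2(\eta^2\bfU,\bfv) = -k^2(\alpha^2\bfU,\bfv)$, so $\bfU$ satisfies $\int_\Om a(\bfU,\bfv)\,dP = \int_\Om(\bff,\bfv)_D\,dP$ for all $\bfv$, i.e. \eqref{Eq:WeakFormulation}. Uniqueness from Theorem \ref{thm:PDEExistence} then forces $\bfU = \bfu^\veps$.

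The main obstacle is the termwise passage to the limit in the weak formulation: everything hinges on (i) the absolute convergence established above and (ii) the continuity, in the $\bL^2(\Om,\bH^1(D))$ topology, of each piece of the sesquilinear form --- in particular the boundary term $\i k\langle A\bfu_n,\bfv\rangle_{\pa D}$, controlled by the trace inequality together with boundedness of $A$, and the $\veps$-weighted volume terms, controlled by the a.s. bound on $\eta$. Once these continuity properties are in hand, the interchange of the infinite sum with the integrated sesquilinear form is routine, and the algebraic recombination into $\alpha^2$ is exact.
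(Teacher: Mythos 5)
Your proposal is correct, and its first half (absolute convergence of $\sum_n \veps^n \bfu_n$ in $\bL^2(\Om,\bH^1(D))$ from the geometric collapse $\veps^n C(n,k)^{1/2}=\tfrac12 C_0^{1/2}c_\veps^n$) is the same argument as the paper's, which proves the partial sums $\bfU_N^\veps$ are Cauchy using \eqref{eq3.6}--\eqref{eq3.7}. Where you genuinely diverge is the identification step. The paper never sums the infinitely many mode equations: it observes instead that the \emph{finite} partial sum $\bfU_N^\veps$ exactly solves the original random problem --- with the full coefficient $\alpha^2$ and the full form $a(\cdot,\cdot)$ --- up to a residual source $-k^2\veps^N\bigl(\eta(2+\veps\eta)\bfu_{N-1}+\eta^2\bfu_{N-2}\bigr)$ (equation \eqref{eq3.13}), estimates this residual by $O(c_\veps^{N-1})$ via \eqref{eq3.6}, and lets $N\to\infty$. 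You instead keep each mode equation in its ``deterministic-operator'' form, multiply by $\veps^n$, sum over all $n$, and recombine the limit right-hand side into $-k^2(\alpha^2\bfU,\bfv)_D$ through the identity $\alpha^2=1+2\veps\eta+\veps^2\eta^2$, justifying the interchange by continuity of $a_0$ (trace inequality for the boundary term) and of multiplication by $\eta,\eta^2$ on $\bL^2(\Om,\bL^2(D))$. Both routes end with the same appeal to uniqueness from Theorem \ref{thm:PDEExistence}. The trade-off: your version replaces an explicit residual estimate by soft continuity arguments, which is arguably cleaner if one only wants validity of \eqref{eq:MultiModes}; the paper's version produces the quantitative identity \eqref{eq3.13} as a byproduct, which is then reused essentially verbatim in Theorem \ref{thm3.3} to obtain the truncation-error rate $c_\veps^{2N}$ for $\bfu^\veps-\bfU_N^\veps$ --- something your infinite-sum argument does not yield for free.
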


\begin{proof}
The proof consists of two parts: (i) to show the infinite
series on the right-hand side of \eqref{eq:MultiModes} converges in $\bL^2(\Ome, 
\bH^1(D))$; (ii) to show the limit coincides with the solution $\bfu^\veps$.
To prove (i), we define the partial sum
\begin{equation}\label{eq3.12}
\bfU^\veps_N:= \sum_{n=0}^{N-1} \veps^n \bfu_n.
\end{equation}
Then for any fixed positive integer $p$ we have
\[
\bfU^\veps_{N+p} -\bfU^\veps_N= \sum_{n=N}^{N+p-1} \veps^n \bfu_n.
\]
It follows from Schwarz inequality and \eqref{eq3.6} that for $j=0,1$
\begin{align*}
&\E\bigl( \|\bfU^\veps_{N+p} -\bfU^\veps_N\|_{H^j(D)}^2 \bigr)
\leq p \sum_{n=N}^{N+p-1} \veps^{2n} \E(\|\bfu_n\|_{H^j(D)}^2) \\
&\hskip 0.5in
\leq p \Bigl(k^{\ta + j - 2} +\frac{1}{k^2} \Bigr)^2 \E(\norm{\bff}_{L^2(D)}^2)
\sum_{n=N}^{N+p-1} \veps^{2n} C(n,k) \nonumber\\
&\hskip 0.5in
\leq C_0p\Bigl(k^{\ta + j - 2} +\frac{1}{k^2} \Bigr)^2 \E(\norm{\bff}_{L^2(D)}
^2)\sum_{n=N}^{N+p-1} c_\veps^{2n}  \nonumber\\
&\hskip 0.5in
\leq C_0p \Bigl(k^{\ta + j - 2} +\frac{1}{k^2} \Bigr)^2 \E(\norm{\bff}_{L^2(D)}
^2)\cdot \frac{c_\veps^{2N} \bigl(1-c_\veps^{2p}\bigr)}{1-c_\veps^2}.  \nonumber
\end{align*}
Thus, if $c_\veps<1$ we have
\[
\lim_{N\to \infty} \E\bigl( \|\bfU^\veps_{N+p} -\bfU^\veps_N\|_{H^1(D)}^2 
\bigr)=0.
\]
Therefore, $\{\bfU^\veps_N\}$ is a Cauchy sequence in $\bL^2(\Ome, \bH^1(D))$.
Since $\bL^2(\Ome, \bH^1(D))$ is a Banach space, then there exists a function
$\bfU^\veps\in \bL^2(\Ome, \bH^1(D))$ such that
\[
\lim_{N\to \infty} \bfU^\veps_N =\bfU^\veps \qquad\mbox{in } \bL^2(\Ome,
\bH^1(D)).
\]

To show (ii),  note that by the definitions of $\bfu_n$ and $\bfU^
\veps_N$,
it is easy to check that $\bfU^\veps_N$ satisfies
\begin{align}\label{eq3.13}
&\int_\Ome a\big(\bfU^\veps_N, \bfv\big) \, dP  \\
& \qquad \qquad  = \int_\Ome (\bff, \bfv)_D \, dP - k^2 \veps^N \int_\Ome \bigl( 
\eta (2+\veps\eta) \bfu_{N-1}
+ \eta^2 \bfu_{N-2},\, \bfv \bigr)_D\, dP \notag
\end{align}
for all $\bfv\in \bL^2(\Ome, \bH^1(D))$.
In other words, $\bfU_N^\veps$ solves the following elastic Helmholtz problem:
\begin{alignat*}{2}
-\ddiv \Big( \sigma \big( \bfU^\veps_N \big) \Big) - k^2 \alpha^2 \bfU^\veps_N
&=\bff- k^2 \veps^N \bigl( \eta (2+\veps\eta) \bfu_{N-1} + \eta^2 \bfu_{N-2} 
\bigr)
&&\qquad\mbox{in } D,\\
\sigma \big(\bfU^\veps_N \big) \bfnu + \i k A \bfU^\veps_N &= \mathbf{0} &&\qquad\mbox{on } \p D,
\end{alignat*}
in the sense of Definition \ref{Def:WeakSolution}.

By \eqref{eq3.6} and Schwarz inequality we have
\begin{align*}
&k^2 \veps^N \left| \int_\Ome \bigl( \eta (2+\veps\eta) \bfu_{N-1}
+ \eta^2 \bfu_{N-2},\, v \bigr)_D\, dP \right|\\
&\qquad
\leq 3k^2\veps^N \Bigl( \bigl(\E(\|\bfu_{N-1}\|_{L^2(D)}^2)\bigr)^{\frac12}
+ \bigl(\E(\|\bfu_{N-2}\|_{L^2(D)}^2)\bigr)^{\frac12} \Bigr)
\bigl(\E(\|\bfv\|_{L^2(D)}^2)\bigr)^{\frac12} \nonumber\\
&\qquad
\leq 6k^2\veps^N \Bigl(k^{\ta - 2} +\frac{1}{k^2} \Bigr)
C(N-1,k)^{\frac12} \bigl(\E(\|\bff\|_{L^2(D)}^2)\bigr)^{\frac12}
\bigl(\E(\|\bfv\|_{L^2(D)}^2)\bigr)^{\frac12} \nonumber \\
&\qquad
\leq 3\veps (k^{\ta}+1)C_0^{\frac12} c_\veps^{N-1} \bigl(\E(\|\bff\|_{L^2(D)}^2)
\bigr)^{\frac12}
\bigl(\E(\|\bfv\|_{L^2(D)}^2)\bigr)^{\frac12} \nonumber \\
&\qquad
\longrightarrow 0 \quad\mbox{as } N\to \infty\quad\mbox{provided that } c_
\veps<1.
\nonumber
\end{align*}

Setting $N\to \infty$ in \eqref{eq3.13} immediately yields
\begin{align}\label{eq3.15}
&\int_\Ome a\big(\bfU^\veps, \bfv\big) \, dP   = \int_\Ome (\bff, \bfv)_D \, 
dP.
\end{align}
Thus, $\bfU^\veps$ is a solution to problem 
\eqref{Eq:ElasticPDE}--\eqref{Eq:ElasticBC}, in the sense of Definition
\ref{Def:WeakSolution}. By the uniqueness of the solution, we conclude that $
\bfU^\veps=\bfu^\veps$. Therefore, \eqref{eq:MultiModes} holds in $\bL^2(\Ome,
\bH^1(D))$. The proof is complete.
\end{proof}

The above proof also implies an upper bound for the error $\bfu^\veps- \bfU^
\veps_N$ as stated in the next theorem.

\begin{theorem}\label{thm3.3}
Let $\bfU^\veps_N$ be the same as above and $\bfu^{\veps}$ denote the solution 
to problem \eqref{Eq:ElasticPDE}--\eqref{Eq:ElasticBC} (in the sense of 
Definition \ref{Def:WeakSolution}), and $c_\veps:=4\veps C_0^{\frac12}
\big(1+k^{\ta}\big)$. Then there holds for $\veps(2\veps+1)<\gamma_0$
\begin{align}\label{eq3.16}
&\E(\norm{\bfu^\veps - \bfU^\veps_N}_{H^j(D)}^2)\\
&\qquad \leq \frac{9 C_0c_\veps^{2N}}{32\big(1+k^{\ta}\big)^2} \Bigl(k^{\ta + j - 1} +
\frac{1}{k} \Bigr)^4 \E(\|\bff\|_{L^2(D)}^2), \quad j=0,1, \notag
\end{align}
provided that $c_\veps<1$. Where $C$ is a positive constant independent of $k$ 
and $\veps$.
\end{theorem}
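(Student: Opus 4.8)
The plan is to avoid summing the tail $\sum_{n\ge N}\veps^n\bfu_n$ directly (which would force a wasteful factor $(1-c_\veps)^{-2}$ through a Minkowski argument) and instead use the observation, already implicit in the proof of Theorem~\ref{thm3.2}, that the truncation error itself solves an elastic Helmholtz problem of exactly the form governed by Theorem~\ref{thm:PDEEstimate}. Writing $\bfe_N:=\bfu^\veps-\bfU^\veps_N$ and subtracting the weak identity \eqref{eq3.13} for $\bfU^\veps_N$ from the one for $\bfu^\veps$, I find that $\bfe_N$ is the weak solution (in the sense of Definition~\ref{Def:WeakSolution}) of
\begin{align*}
	-\ddiv\big(\sigma(\bfe_N)\big)-k^2\alpha^2\bfe_N &= \bfg_N && \mbox{in } D,\\
	\sigma(\bfe_N)\bfnu + \i k A\bfe_N &= \mathbf{0} && \mbox{on } \pa D,
\end{align*}
with residual source $\bfg_N:=k^2\veps^N\big(\eta(2+\veps\eta)\bfu_{N-1}+\eta^2\bfu_{N-2}\big)$. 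Since the hypothesis $\veps(2\veps+1)<\gamma_0$ (together with $c_\veps<1$) places this problem inside the scope of Theorem~\ref{thm:PDEEstimate}, the entire estimate reduces to controlling $\E(\|\bfg_N\|_{L^2(D)}^2)$.

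First I would apply Theorem~\ref{thm:PDEEstimate} to $\bfe_N$ at the $H^j$ level, giving, for $j=0,1$,
\[
	\E\big(\|\bfe_N\|_{H^j(D)}^2\big)\le C_0\big(k^{\ta+j-2}+k^{-2}\big)^2\,\E\big(\|\bfg_N\|_{L^2(D)}^2\big).
\]
To estimate the source I would use the a.s.\ bound $\|\eta(\om,\cdot)\|_{L^\infty(D)}\le1$ together with $\veps\le\tfrac12$ to dominate the coefficients $\eta(2+\veps\eta)$ and $\eta^2$ pointwise, then the triangle inequality in $\bL^2(\Om,\bL^2(D))$ to reduce $(\E\|\bfg_N\|_{L^2(D)}^2)^{1/2}$ to $k^2\veps^N$ times a sum of the $\bL^2(\Om,\bL^2(D))$-norms of $\bfu_{N-1}$ and $\bfu_{N-2}$. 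Bounding each of these $L^2$-norms by the matching $H^j$-norm and invoking \eqref{eq3.6}--\eqref{eq3.7} of Theorem~\ref{thm3.1}, and using $k^2\big(k^{\ta+j-2}+k^{-2}\big)=k^{\ta+j}+1$ to absorb the prefactor $k^2$, yields a source factor $(k^{\ta+j}+1)^2$. Multiplying this by the a priori factor $(k^{\ta+j-2}+k^{-2})^2$ and applying the same identity once more produces precisely the stated fourth power $(k^{\ta+j-1}+k^{-1})^4$; the base case $N=1$ is handled separately via $\bfu_{-1}\equiv\mathbf{0}$ in \eqref{eq3.4}.

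The remaining and most delicate step is to show that the recursive constants $C(n,k)$ telescope exactly into $c_\veps^{2N}$. Here I would use the two algebraic identities that follow immediately from the definition $c_\veps=4\veps C_0^{1/2}(1+k^\ta)$ and from \eqref{eq3.7a}: namely $\veps^n C(n,k)^{1/2}=\tfrac12 C_0^{1/2}c_\veps^{\,n}$ and $\veps^2(1+k^\ta)^2=c_\veps^2/(16C_0)$. Substituting the first (with the appropriate index shift for $\bfu_{N-1}$ and $\bfu_{N-2}$) collapses $\veps^N C(N-1,k)^{1/2}$ into $c_\veps^{\,N-1}$, and the second converts the leftover $\veps^2(1+k^\ta)^2$ into $c_\veps^2/(16C_0)$, producing the overall $c_\veps^{2N}$ and the denominator $(1+k^\ta)^2$ while leaving the numerical constant $9C_0/32$. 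I expect the main obstacle to be precisely this constant-and-exponent accounting—in particular merging the two source terms (using $C(N-2,k)\le C(N-1,k)$) with the sharp powers so that no stray factor of $k$ or $(1+k^\ta)$ survives—rather than any genuine analytic difficulty, since all the hard frequency-explicit analysis is already packaged into Theorem~\ref{thm:PDEEstimate} and Theorem~\ref{thm3.1}.
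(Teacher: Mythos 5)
Your proposal follows essentially the same route as the paper's own proof: the paper likewise identifies $\bfpsi^\veps_N := \bfu^\veps - \bfU^\veps_N$ as the weak solution of the elastic Helmholtz problem with residual source $k^2\veps^N\bigl(\eta(2+\veps\eta)\bfu_{N-1}+\eta^2\bfu_{N-2}\bigr)$ and homogeneous ABC, applies Theorem \ref{thm:PDEEstimate} to it, bounds the source via the mode estimates \eqref{eq3.6} of Theorem \ref{thm3.1}, and carries out the same $c_\veps$--$C(n,k)$ bookkeeping (using $k^{j}\bigl(k^{\ta+j-2}+k^{-2}\bigr)$-type identities) to arrive at the constant $\frac{9C_0}{32(1+k^{\ta})^2}$. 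Your argument is correct, with the same minor constant-accounting looseness (e.g., in the base case $N=1$) that the paper's own computation also exhibits.
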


\begin{proof}
Let $\bfpsi^{\veps}_N:= \bfu^\veps - \bfU^\veps_N$, subtracting \eqref{eq3.13} 
from \eqref{eq3.15} we get
\begin{align}\label{eq3.17}
&\int_\Ome a\big( \bfpsi^{\veps}_N, \bfv \big)_D \, dP = k^2 \veps^N \int_\Ome 
\bigl( \eta (2+\veps\eta) \bfu_{N-1} + \eta^2 \bfu_{N-2},\, \bfv \bigr)_D\, dP. 
\end{align}
In other words, $\bfpsi^\veps_N$ solves the following Helmholtz problem:
\begin{alignat*}{2}
-\ddiv \Big( \sigma\big( \bfpsi^\veps_N\big) \Big) - k^2 \alpha^2 \bfpsi^\veps_N
&= k^2 \veps^N \bigl( \eta (2+\veps\eta) \bfu_{N-1} + \eta^2 \bfu_{N-2} \bigr)
&&\qquad\mbox{in } D,\\
\p_\nu \bfpsi^\veps_N + \i k\alpha \bfpsi^\veps_N &= \mathbf{0} &&\qquad\mbox{on 
} \p D,
\end{alignat*}
in the sense of Definition \ref{Def:WeakSolution}.

By Theorem \ref{thm:PDEEstimate} and \eqref{eq3.6} we obtain for $j=0,1$ there holds
\begin{align*}
&\E(\|\bfpsi^\veps_N\|_{H^j(D)}^2) \\
& \qquad \leq 18 C_0 \Bigl(k^{\ta + j - 2} +\frac{1}{k^2} \Bigr)^2\,
\Bigl[k^4\veps^{2N} \Bigl(\E(\|\bfu_{N-1}\|_{L^2(D)}^2) + \E(\|\bfu_{N-2}\|
_{L^2(D)}^2) \Bigr) \Bigr] \\
& \qquad \leq 18C_0 k^4\veps^{2N} \Bigl(k^{\ta + j - 2} +\frac{1}{k^2} \Bigr)^4
C(N-1,k)\, \E(\|\bff\|_{L^2(D)}^2) \nonumber  \\
& \qquad \leq \frac{18C_0 c_{\veps}^{2N}}{64\big(1+k^{\ta}\big)^2} \Bigl(k^{\ta + j - 1} +
\frac{1}{k} \Bigr)^4 \E(\|\bff\|_{L^2(D)}^2). \nonumber
\end{align*}
The proof is complete.
\end{proof}

\begin{remark} \label{rem:eps_size}
	The condition $c_\veps < 1$, which is used to ensure that the multi-modes expansion 
	\eqref{eq:MultiModes} is valid, is of the form $\veps = O\big( k^{-\ta} 
	\big)$.  In the case that \eqref{eq:Stoch_Bound_Korn} holds, this 
	restriction on the size of the perturbation parameter $\veps$ takes the form 
	$\veps = O \big( k^{-1} \big)$.  This matches the analogous result for the 
	scalar Helmholtz problem in weakly random media in 
	\cite{Feng_Lin_Lorton_15}.
\end{remark}

\section{Monte Carlo discontinuous Galerkin approximation of the truncated 
multi-modes expansion $\bfU^{\veps}_N$} \label{sec:MCIP-DG}

In the previous sections, we prove a multi-modes representation for the solution to
\eqref{Eq:ElasticPDE}--\eqref{Eq:ElasticBC} and also derive the convergence rate for 
its finite modes approximation $\bfU^\veps_N$. 
Our overall numerical methodology is based on approximating $
\bfu^\veps$ by its finite modes expansion $\bfU^\veps_N$.  Thus, to approximate 
$\E (\bfu^\veps)$ we need to compute the expectations $\{ \E (\bfu_n) \}$ of the 
first $N$ mode functions $\{ \bfu_n \}_{n = 0}^{N-1}$.  This requires the use of 
an accurate and robust numerical (discretization) method for the ``nearly 
deterministic" elastic Helmholtz problems \eqref{eq3.3}, \eqref{eq3.5} and 
\eqref{eq3.4}, \eqref{eq3.5}. The construction of such a numerical method is the 
focus of this section.  Clearly, $\E(\bfu_n)$ cannot be computed 
directly for $n \geq 1$ due to the multiplicative nature of the right-hand side 
of \eqref{eq3.4}.  On the other hand, $\E(\bfu_0)$ can be computed directly 
because it satisfies a deterministic elastic Helmholtz equation with right-hand 
side $\E(\bff)$ and a homogeneous boundary condition.

The goal of this section is to develop a Monte Carlo interior penalty 
discontinuous Galerkin (MCIP-DG) method for the above mentioned elastic 
Helmholtz problems.  Our MCIP-DG method is a direct generalization of the 
deterministic IP-DG method proposed by us in \cite{Feng_Lorton_15} for the related 
deterministic elastic Helmholtz problem.  This IP-DG method was chosen because 
it is shown to perform well in the case of a large frequency $k$.  In 
particular, this IP-DG method is shown to be unconditionally stable (i.e. 
stable without a mesh constraint) and optimally convergent in the mesh parameter 
$h$.

\subsection{DG notation}
To introduce the IP-DG method we need to start with some standard notation 
used in the DG community. Let $\cT_h$ be a quasi-uniform partition of $D$ such 
that $\overline{D}=\bigcup_{K\in\cT_h} \overline{K}$. Let $h_K$ denote
the diameter of $K\in \cT_h$ and $h:=\mbox{max}\{h_K; K\in\cT_h\}$.
$\bH^s(\cT_h)$ denotes the standard broken Sobolev space and $\bV^h$ denotes
the DG finite element space which are both defined as
\[
\bH^s(\cT_h):=\prod_{K\in\cT_h} \bH^{s}(K), \qquad \bV^h:=\prod_{K\in\cT_h} 
\left( P_1(K) \right)^d,
\]
where $P_1(K)$ is the set of all polynomials of degree less than or equal to 1.  
Let $\cE_h^I$ denote the set of all interior faces/edges of $\cT_h$, $\cE_h^B$ 
denote the set of all boundary faces/edges of $\cT_h$, and $\cE_h:=\cE_h^I\cup \cE_h^B
$. We define the following two $L^2$-inner products for piecewise continuous functions over 
the mesh $\cT_h$ 
\[
(\bfv,\bfw)_{\cT_h}:= \sum_{K\in \cT_h} \int_{K} \bfv \cdot \overline{\bfw} \, dx, 
\qquad \Langle \bfv,\bfw\Rangle_{\cS_h} :=\sum_{e\in \cS_h} \int_e \bfv \cdot 
\overline{\bfw} \, dS,
\]
for any set $\cS_h \subset \cE_h$. 

Let $K, K'\in \cT_h$ and $e=\partial K\cap \partial K'$ and assume
global labeling number of $K$ is smaller than that of $K'$.
We choose $\bfnu_e:= \bfnu_K|_e=-\bfnu_{K'}|_e$ as the unit normal on $e$ 
outward to $K$ and define the following standard jump and average notations 
across the face/edge $e$:
\begin{alignat*}{4}
[\bfv] &:= \bfv|_K- \bfv|_{K'}
\quad &&\mbox{on } e\in \cE_h^I,\qquad
&&[\bfv] := \bfv\quad
&&\mbox{on } e\in \cE_h^B,\\
\{ \bfv\} &:=\frac12\bigl( \bfv|_K + \bfv|_{K'} \bigr) \quad
&&\mbox{on } e\in \cE_h^I,\qquad
&&\{\bfv\}:= \bfv\quad
&&\mbox{on } e\in \cE_h^B,
\end{alignat*}
for $\bfv\in \bV^h$. We also define the following semi-norms on $\bH^s(\cT_h)$:
\begin{align*}
| \bfv |_{1,h} &:= \left ( \sum_{K \in \mathcal{T}_h} \lambda \| \ddiv \bfv \|
_{L^2(K)}^2 + 2\mu \| \bsnabla\bfv \|_{L^2(K)}^2 \right)^{ \frac{1}{2}}, \\
\|\bfv \|_{1,h} &:=  \left( |\bfv |^2_{1,h} 
+ \sum_{e \in \mathcal{E}_h^I} \Big( \frac{\gamma_{0,e}}{h_e} \big \| [\bfv ] 
\big \|^2_{L^2(e)} + \gamma_{1,e} h_e \big \| [\sigma(\bfv) \bfn_e] \big \|
^2_{L^2(e)} \Big) \right)^{\frac{1}{2}}, 
\end{align*}
\begin{align*}
|||\bfv|||_{1,h} &:= \left( \| \bfv \|^2_{1,h} + \sum_{e \in \mathcal{E}_h^I}
\frac{h_e}{\gamma_{0,e}}  \big \| \{\sigma(\bfv) \bfn_e\} \big \|^2_{L^2(e)} 
\right)^{\frac{1}{2}}.
\end{align*}
Here $\{\gamma_{0,e}\}$ and
$\{\gamma_{1,e}\}$ are penalty parameters to be discussed in more detail in the next subsection.

\subsection{IP-DG method for the deterministic elastic Helmholtz problem}

In this subsection we consider the following deterministic elastic Helmholtz 
problem and its IP-DG approximation proposed in \cite{Feng_Lorton_15}
\begin{align}
	- \bdiv(\sigma(\bfPhi)) - k^2 \bfPhi &= \bfF \qquad \mbox{ in } D, 
	\label{Eq:DeterministicPDE} \\
	\sigma(\bfPhi) \bfnu + \bfi k A \bfPhi &= \mathbf{0} \qquad \mbox{ on } \pa 
	D. 
\label{Eq:DeterministicBC}
\end{align}
We note that when $\bfF = \E (\bff )$, the solution to 
\eqref{Eq:DeterministicPDE}--\eqref{Eq:DeterministicBC} is $\bfPhi = \E (\bfu_0)
$.  Thus, all the results of this subsection apply to the mean of the first mode 
function $\bfu_0$.

The IP-DG weak formulation of 
\eqref{Eq:DeterministicPDE}--\eqref{Eq:DeterministicBC} is defined as (cf. 
\cite{Feng_Lorton_15}) seeking $\bfPhi \in \bH^1(D)$ such that 
\begin{align}
	a_h( \bfPhi, \bfpsi ) = (\bfF, \bfpsi)_D \qquad \forall \bfpsi \in \bH^1(D), 
	\label{Eq:IPDG_Weak_Form_1}
\end{align}
where
\begin{align} \label{Eq:IPDF_Weak_Form_2}
	a_h(\bfphi, \bfpsi) &:= b_h(\bfphi, \bfpsi) - k^2 (\bfphi,\bfpsi)_D + \i k 
	\langle A \bfphi, \bfpsi \rangle_{\pa D} \\
	& \qquad + \i \big( J_0 (\bfphi, \bfpsi) + 
	J_1(\bfphi, \bfpsi) \big), \notag \\[.3cm] 
	b_h(\bfphi, \bfpsi) &:=  \lambda \big(\ddiv \bfphi, \ddiv \bfpsi 
	\big)_{\mathcal{T}_h} + 2 \mu \big(\bsnabla\bfphi, \bsnabla\bfpsi 
	\big)_{\mathcal{T}_h} - \bigl \langle \{ \sigma(\bfphi) \bfn_e \}, [\bfpsi] 
	\big\rangle_{\mathcal{E}^I_h} \notag \\
	& \qquad - \big \langle [\bfphi],  \{\sigma(\bfpsi) \bfn_e \}\big 
	\rangle_{\mathcal{E}^I_h}, \notag \\[.3cm]
	J_0 (\bfphi, \bfpsi) &:= \sum_{e \in \mathcal{E}^I_h} \frac{\gamma_{0,e}}
	{h_e} \big \langle[\bfphi], [\bfpsi] \big \rangle, \notag \\
	J_1 (\bfphi, \bfpsi) &:= \sum_{e \in \mathcal{E}^I_h} \gamma_{1,e} h_e \big
	\langle [\sigma(\bfphi) \bfn_e], [\sigma(\bfpsi) \bfn_e] \big\rangle . 
	\notag
\end{align}

\begin{remark} \label{Rem:Penalty_Terms}
	$J_0$ and $J_1$ are called interior penalty terms and the constants $
	\gamma_{0,e}$ and $\gamma_{1,e}$ are called penalty parameters and are taken 
	to be real-valued constants for each edge/face $e \in \mathcal{E}^I_h$.  These 
	terms are necessary components of a convergent IP-DG method and are used to 
	enforce continuity along element edges/faces and
	enhance the coercivity of the sesquilinear form $a_h(\cdot,\cdot)$.  Here we 
	note that $J_0$ is used to penalize jumps of function values over element edges
        and $J_1$ penalizes jumps of the normal stress $\sigma(\bfphi) 
	\bfn_e$ over element edges/faces.  Another interesting feature of this IP-DG 
	formulation is the use of purely imaginary penalization evident in the 
	multiplication of the imaginary unit $\i$ to the penalty terms $J_0$ and 
	$J_1$ in \eqref{Eq:IPDF_Weak_Form_2}.  Purely imaginary penalization is a 
	key component to ensuring that the IP-DG method for the deterministic 
	elastic Helmholtz equations is unconditionally stable.
	Purely imaginary penalty parameters also yield unconditional stability for the 
	acoustic Helmholtz equation and the time-harmonic Maxwell's equations \cite{Feng_Wu_09,Feng_Wu_14}.
\end{remark}

Following \cite{Feng_Lorton_15}, our IP-DG method for the deterministic elastic 
Helmholtz problem \eqref{Eq:DeterministicPDE}--\eqref{Eq:DeterministicBC} is 
defined as seeking $\bfPhi^h \in \bV^h$ such that
\begin{align}
	a_h( \bfPhi^h, \bfpsi^h ) = (\bfF, \bfpsi^h)_D \qquad \forall \bfpsi^h \in \bV^h. \label{Eq:IPDG_Weak_Form_3}
\end{align}
In \cite{Feng_Lorton_15,Lorton_14} it was proved that the above IP-DG method is 
unconditionally stable and its solutions satisfy some frequency-explicit 
stability estimates.  Its solutions also satisfy optimal order (in $h$) error 
estimates.  These results are summarized below in the following theorems.  
To make the constants in these theorems more tractable we assume that
$\gamma_{0,e}\geq \gamma_0>0$ and $\gamma_{1,e}\geq \gamma_1>0$. We also use $C$ 
to denote a generic positive constant independent of all other parameters in this paper.

\begin{theorem} \label{thm:IPDG_Stability}
	Let $\bfPhi^h \in \bV^h$ solve \eqref{Eq:IPDG_Weak_Form_3} for some $\bfF \in \bL^2(D)$. 
	
	(i) Then for any $k > 0$ and $h > 0$ there exists a positive constant  $
	\hat{C}_0$ independent of $k$, $h$, $\gamma_{0}$, and $\gamma_{1}$ such that 
	the following stability estimate holds
	\begin{align}\label{Eq:IPDG_Stability_1}
		&\| \bfPhi^h \|_{L^2(D)} +  \| \bfPhi^h \|_{L^2(\pa D)} + \frac{1}{k} \| 
		\bfPhi^h \|_{1,h} \\
		& \hspace*{2cm} \leq \frac{\hat{C}_0}{k} \left(C_s^2 + C_s + 
		\frac{C_s}{k} \right)^{\frac{1}{2}} \| \bfF \|_{L^2(D)}, \notag
	\end{align}
	where
	\begin{align*}
		C_s :=  \frac{\gamma_{0} + 1}{k \gamma_{0}} + \frac{1}{k^2 h} + \frac{1}
		{k^3 h^2 \gamma_{1}}.
	\end{align*}
	
	(ii) If $k \geq 1$ and $k^{\ta + 1} h = O(1)$, then there exists a positive 
	constant $\hat{C}_0$ independent of $k$ and $h$ such that
	\begin{align} \label{Eq:IPDG_Stability_2}
		\| \bfPhi^h \|_{L^2(D)} + \frac{1}{k}\| \bfPhi^h \|_{1,h}  \leq \hat{C}
		_0 \left(k^{\ta - 2} + \frac{1}{k^{2}} \right) \| \bfF \|_{L^2(D)}.
	\end{align}
\end{theorem}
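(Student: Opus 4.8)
The plan is to adapt the Rellich-identity stability argument for discontinuous Galerkin Helmholtz discretizations (in the spirit of \cite{Feng_Wu_09}) to the elastic operator, while carefully accounting for the interior penalty terms $J_0$ and $J_1$. The argument rests on two choices of test function in \eqref{Eq:IPDG_Weak_Form_3}: the solution itself, which yields an energy identity, and a discrete Rellich multiplier, which supplies control of the $L^2(D)$ norm that the energy identity alone cannot provide.

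First I would take $\bfpsi^h=\bfPhi^h$ and separate real and imaginary parts. Since the boundary term $\i k\langle A\bfPhi^h,\bfPhi^h\rangle_{\pa D}$ and the penalty contribution $\i(J_0+J_1)$ both carry the imaginary unit, while $b_h(\bfPhi^h,\bfPhi^h)$ and $(\bfPhi^h,\bfPhi^h)_D$ are real, the imaginary part reads
\[
k\langle A\bfPhi^h,\bfPhi^h\rangle_{\pa D}+J_0(\bfPhi^h,\bfPhi^h)+J_1(\bfPhi^h,\bfPhi^h)=\im(\bfF,\bfPhi^h)_D,
\]
which, using \eqref{eq:SPDProperty} and the Cauchy--Schwarz inequality, bounds $\|\bfPhi^h\|_{L^2(\pa D)}^2$ together with the two jump seminorms appearing in $\|\cdot\|_{1,h}$ in terms of $\|\bfF\|_{L^2(D)}\,\|\bfPhi^h\|_{L^2(D)}$. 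The real part gives $b_h(\bfPhi^h,\bfPhi^h)-k^2\|\bfPhi^h\|_{L^2(D)}^2=\re(\bfF,\bfPhi^h)_D$, relating the broken energy $|\bfPhi^h|_{1,h}^2$ to $k^2\|\bfPhi^h\|_{L^2(D)}^2$; because of indefiniteness this does not by itself bound either quantity.

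The crux --- and the step I expect to be the main obstacle --- is the $L^2(D)$ estimate for $\bfPhi^h$. For the continuous problem this is furnished by the Rellich identities of Lemma \ref{lem:Rellich} with the multiplier $(\bnabla\bfu)\bfx$; here I would test \eqref{Eq:IPDG_Weak_Form_3} with a piecewise version built from $(\bnabla\bfPhi^h)\bfx$. Because $\bfPhi^h\in\bV^h$ is discontinuous, integrating by parts element by element generates numerous interface contributions on $\cE_h^I$ together with boundary contributions on $\cE_h^B$; each must be estimated with trace and inverse inequalities on the quasi-uniform mesh and then absorbed into the penalty quantities $J_0$, $J_1$ and the boundary norm already controlled in the energy step. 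Tracking this bookkeeping, together with Young's inequality and the star-shape condition $\bfx\cdot\bfnu\geq c_0$, is precisely what produces the three contributions in $C_s$: the value-jump terms penalized by $J_0$ yield $(\gamma_0+1)/(k\gamma_0)$, the broken trace estimates yield $1/(k^2h)$, and the normal-stress jumps penalized by $J_1$ yield $1/(k^3h^2\gamma_1)$. Combining the energy and Rellich identities and dividing through by $k$ then gives \eqref{Eq:IPDG_Stability_1}.

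Finally, for part (ii) I would specialize \eqref{Eq:IPDG_Stability_1} under the mesh constraint $k^{\ta+1}h=O(1)$ (so that $h$ is commensurate with $k^{-(\ta+1)}$) and the fixed penalty lower bounds $\gamma_{0,e}\geq\gamma_0>0$, $\gamma_{1,e}\geq\gamma_1>0$ with $k\geq 1$. Inserting this scaling into the definition of $C_s$ and simplifying, the three defining terms balance so that the factor $\tfrac{1}{k}(C_s^2+C_s+C_s/k)^{1/2}$ is of order $k^{\ta-2}+k^{-2}$; discarding the now-subordinate boundary-norm term on the left of \eqref{Eq:IPDG_Stability_1} then yields the optimal-in-$k$ estimate \eqref{Eq:IPDG_Stability_2}. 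The detailed verification of this balancing, and of the full discrete Rellich computation above, is carried out in \cite{Feng_Lorton_15,Lorton_14}.
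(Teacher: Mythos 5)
A preliminary remark: the paper itself does not prove Theorem \ref{thm:IPDG_Stability}; both parts are imported from \cite{Feng_Lorton_15,Lorton_14}, so your proposal can only be measured against the methodology of those references and against internal consistency. Your outline of part (i) is consistent with that methodology: testing with $\bfPhi^h$ and using that $b_h(\bfPhi^h,\bfPhi^h)$, $J_0(\bfPhi^h,\bfPhi^h)$, $J_1(\bfPhi^h,\bfPhi^h)$ and $\langle A\bfPhi^h,\bfPhi^h\rangle_{\pa D}$ are all real, the imaginary part of the resulting identity controls $k\|\bfPhi^h\|_{L^2(\pa D)}^2 + J_0 + J_1$ by $\|\bfF\|_{L^2(D)}\|\bfPhi^h\|_{L^2(D)}$ (this is exactly the role of the purely imaginary penalties), and the missing $L^2(D)$ control is recovered with a piecewise Rellich multiplier $(\bnabla\bfPhi^h)\bfx$ whose interface debris is absorbed into $J_0$, $J_1$ and the boundary terms, producing the three contributions to $C_s$. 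That is the correct plan for \eqref{Eq:IPDG_Stability_1}, even though you defer every actual estimate to the references.

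The genuine gap is in your derivation of part (ii). The estimate \eqref{Eq:IPDG_Stability_2} is \emph{not} obtained by inserting the scaling $h \sim k^{-(\ta+1)}$ into $C_s$: the numerology fails. With $h = k^{-(\ta+1)}$ one gets $\frac{1}{k^2h} = k^{\ta-1}$ and $\frac{1}{k^3h^2\gamma_1} = \gamma_1^{-1}k^{2\ta-1}$, so $C_s \sim k^{2\ta-1}$, and hence
\begin{align*}
\frac{1}{k}\left(C_s^2 + C_s + \frac{C_s}{k}\right)^{\frac12} \;\sim\; \frac{C_s}{k} \;\sim\; k^{2\ta-2},
\end{align*}
which exceeds the claimed bound $k^{\ta-2}+k^{-2}$ by a factor of $k^{\ta}$ (a factor of $k$ when $\ta=1$, of $k^2$ when $\ta=2$). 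Worse, the condition $k^{\ta+1}h = O(1)$ allows $h$ to be arbitrarily small for fixed $k$, in which case $C_s \to \infty$ and the specialized bound \eqref{Eq:IPDG_Stability_1} degenerates completely, while \eqref{Eq:IPDG_Stability_2} must remain uniform in $h$; so no algebraic manipulation of part (i) can yield part (ii). The asymptotic-regime stability is instead proved by a different mechanism: write $\|\bfPhi^h\|_{L^2(D)} \leq \|\bfPhi\|_{L^2(D)} + \|\bfPhi - \bfPhi^h\|_{L^2(D)}$, bound $\|\bfPhi\|_{L^2(D)} \lesssim (k^{\ta-2}+k^{-2})\|\bfF\|_{L^2(D)}$ and $\|\bfPhi\|_{H^2(D)} \lesssim (k^{\ta}+k^{-2})\|\bfF\|_{L^2(D)}$ by the deterministic analogues of Theorem \ref{thm:PDEEstimate}, and then invoke the error estimates of Theorem \ref{thm:IPDG_Error} part (ii) (themselves proved by a Schatz-type duality argument, not by part (i) alone); since $h^2(k^{\ta}+k^{-2}) \lesssim k^{-\ta-2} + k^{-2\ta-4} \lesssim k^{\ta-2}+k^{-2}$ under $k^{\ta+1}h = O(1)$, the discretization error is subordinate and $\bfPhi^h$ inherits the continuous stability constant. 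Your sketch is missing this triangle-inequality/duality step, which is the entire content of part (ii).
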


\begin{remark} \label{rem:Asymptotic_Preasymptotic_Regimes}
	The condition $k^{\ta + 1} h = O(1)$ is a constraint on the mesh size for 
	fixed frequency $k$.  When $h$ is chosen to satisfy this constraint 
	the approximation method is said to be in the asymptotic mesh regime.  One 
	advantage of the above IP-DG method is that stability is ensured even 
	in the pre-asymptotic mesh regime (i.e. when $h$ does not satisfy $k^{\ta + 
	1} h = O(1)$).
\end{remark}

As an immediate consequence of Theorem \ref{thm:IPDG_Stability} we obtain the 
following unconditional solvability and uniqueness result for 
\eqref{Eq:IPDG_Weak_Form_3}.
\begin{corollary}
	For every $k,h >0$, and $\bfF \in \bL^2(D)$ there exists a unique solution $
	\bfPhi^h \in \bV^h$ to \eqref{Eq:IPDG_Weak_Form_3}.
\end{corollary}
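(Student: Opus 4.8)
The plan is to exploit the fact that \eqref{Eq:IPDG_Weak_Form_3} is posed on the finite-dimensional space $\bV^h$, so that existence and uniqueness become equivalent and both reduce to showing that the associated homogeneous problem admits only the trivial solution.

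First I would observe that, upon fixing a basis of $\bV^h$, the variational problem \eqref{Eq:IPDG_Weak_Form_3} is equivalent to a square linear system $\mathbf{M}\bfx=\bfb$, where $\mathbf{M}$ is the (complex) stiffness matrix whose entries are $a_h(\cdot,\cdot)$ evaluated on the basis functions and $\bfb$ encodes the functional $(\bfF,\cdot)_D$. For such a square system, solvability for every right-hand side is equivalent to the injectivity of $\mathbf{M}$, that is, to uniqueness of the solution. Hence it suffices to prove uniqueness.

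Next I would establish uniqueness via the a priori stability bound already furnished by Theorem \ref{thm:IPDG_Stability}. Suppose $\bfPhi^h\in\bV^h$ solves \eqref{Eq:IPDG_Weak_Form_3} with $\bfF=\mathbf{0}$. Since the estimate \eqref{Eq:IPDG_Stability_1} holds unconditionally for all $k>0$ and $h>0$, applying it with the zero datum yields
\begin{align*}
\| \bfPhi^h \|_{L^2(D)} + \| \bfPhi^h \|_{L^2(\pa D)} + \frac{1}{k} \| \bfPhi^h \|_{1,h} \leq 0,
\end{align*}
and since each term on the left is nonnegative, this forces $\bfPhi^h=\mathbf{0}$. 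Thus the homogeneous problem has only the trivial solution, so $\mathbf{M}$ is injective, hence invertible, and \eqref{Eq:IPDG_Weak_Form_3} has a unique solution for every $\bfF\in\bL^2(D)$.

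There is essentially no serious obstacle here: the entire analytic content has been front-loaded into the unconditional stability estimate \eqref{Eq:IPDG_Stability_1}, and the corollary is the standard finite-dimensional ``stability implies well-posedness'' argument. The only point requiring care is to invoke part (i) of Theorem \ref{thm:IPDG_Stability} rather than part (ii), since part (i) is valid for arbitrary $k,h>0$ whereas part (ii) carries the mesh constraint $k^{\ta+1}h=O(1)$; using part (i) is precisely what yields the stated conclusion holding ``for every $k,h>0$.''
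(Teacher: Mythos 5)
Your proof is correct and is exactly the argument the paper intends: the corollary is stated there as an immediate consequence of Theorem \ref{thm:IPDG_Stability}, namely that the unconditional stability bound \eqref{Eq:IPDG_Stability_1} forces the homogeneous discrete problem to have only the trivial solution, whence invertibility of the square stiffness matrix gives existence and uniqueness for every $\bfF \in \bL^2(D)$. Your care in invoking part (i) rather than the mesh-constrained part (ii) is precisely the right point to make the conclusion hold for all $k,h>0$.
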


\begin{theorem} \label{thm:IPDG_Error}
	Let $\bfPhi \in \bH^2(D)$ solve 
	\eqref{Eq:DeterministicPDE}--\eqref{Eq:DeterministicBC} and $\bfPhi^h \in \bV^h$ solve 
	\eqref{Eq:IPDG_Weak_Form_3} for $k,h > 0$. 
Suppose $\gamma_{0,e}\geq \gamma_0>0, \gamma_{1,e}\geq \gamma_1>0$ and let $\xi := 1 + \gamma_0^{-1}$.
	
	(i) For all $k,h > 0$, there exists a positive constant $C$ 
	independent of $k$, $h$, $\gamma_0$, and $\gamma_1$ such that
	\begin{align} \label{Eq:IPDG_Error_1}
		&\| \bfPhi - \bfPhi^h \|_{1,h} + k \| \bfPhi - \bfPhi^h \|_{L^2(D)}  \\
		& \hskip 0.8in  \leq C \xi^2 \Big(h + k h^2 \big(1 + k C_s \big) \Big) \big( 
		\xi + \gamma_1 + k h \big) \| \bfPhi \|_{H^2(D)}, \notag \\
		&\| \bfPhi - \bfPhi^h \|_{L^2(D)} \label{Eq:IPDG_Error_2} 
                 \leq C \xi^2 h^2 \big(1 + k C_s \big)\big( \xi + \gamma_1 + k h 
		\big) \| \bfPhi \|_{H^2(D)},
	\end{align}
	where $C_s$ is the positive constant defined in Theorem 
	\ref{thm:IPDG_Stability} part (i).
	
	(ii) If $k \geq 1$ and $k^{\ta + 1} h = O(1)$, then there exists a positive 
	constant $C$ independent of $k$, $h$, $\gamma_0$, and $\gamma_1$ 
	such that
	\begin{align}
	\| \bfPhi - \bfPhi^h \|_{1,h} &\leq C \xi^{4} (\xi + \gamma_1)\left(h + k 
	h^2 \right) \| \bfPhi \|_{H^2(D)}, \label{Eq:IPDG_Error_3} \\ 
	\| \bfPhi - \bfPhi^h \|_{L^2(D)} &\leq C \xi^{6} (\xi + \gamma_1)^2 
	\left(h^2 + k h^3 \right) \| \bfPhi \|_{H^2(D)}. \label{Eq:IPDG_Error_4}
\end{align}
\end{theorem}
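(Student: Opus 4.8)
The estimates are the standard duality-based finite element error bounds for the indefinite elastic Helmholtz operator, and the plan is to combine Galerkin orthogonality, local interpolation estimates, continuity of $a_h$ in the DG norms, and the frequency-explicit discrete stability bound of Theorem~\ref{thm:IPDG_Stability}. First I would establish \emph{consistency}: since $\bfPhi\in\bH^2(D)$ solves \eqref{Eq:DeterministicPDE}--\eqref{Eq:DeterministicBC} strongly, elementwise integration by parts together with the continuity of $\bfPhi$ and of the normal stress $\sigma(\bfPhi)\bfn_e$ across each interior face $e\in\cE_h^I$ makes every jump contribution in $a_h(\bfPhi,\bfpsi^h)$ vanish, so $a_h(\bfPhi,\bfpsi^h)=(\bfF,\bfpsi^h)_D$ for all $\bfpsi^h\in\bV^h$; subtracting \eqref{Eq:IPDG_Weak_Form_3} gives Galerkin orthogonality $a_h(\bfPhi-\bfPhi^h,\bfpsi^h)=0$. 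I would then split $\bfPhi-\bfPhi^h=\eta+\theta$, where $\eta:=\bfPhi-\Pi_h\bfPhi$ and $\theta:=\Pi_h\bfPhi-\bfPhi^h\in\bV^h$ for a standard (Scott--Zhang type) interpolant $\Pi_h$. Tracking the penalty weights $\gamma_{0,e}^{-1}$ and $\gamma_{1,e}$ in the face terms of the norms, local approximation and trace estimates give $|||\eta|||_{1,h}\lesssim (\xi+\gamma_1+kh)^{1/2}\,h\,\|\bfPhi\|_{H^2(D)}$ and $\|\eta\|_{L^2(D)}\lesssim h^2\|\bfPhi\|_{H^2(D)}$; these supply the $\xi$- and $\gamma_1$-dependent prefactors in the theorem.

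Next I would derive the energy estimate. The form obeys a G\aa rding-type inequality: its imaginary part, $\im a_h(\bfv,\bfv)=k\langle A\bfv,\bfv\rangle_{\pa D}+J_0(\bfv,\bfv)+J_1(\bfv,\bfv)$, controls the boundary and jump terms, while $\re a_h(\bfv,\bfv)=b_h(\bfv,\bfv)-k^2\|\bfv\|_{L^2(D)}^2$ controls $|\bfv|_{1,h}^2$ up to the indefinite term $k^2\|\bfv\|_{L^2(D)}^2$. Writing $e:=\bfPhi-\bfPhi^h$ and using $a_h(e,\theta)=0$ to get $a_h(e,e)=a_h(e,\eta)$, I would bound $|a_h(e,\eta)|$ via continuity in the triple norm, the crucial point being that the non-sign-definite average-stress terms $\langle\{\sigma(\cdot)\bfn_e\},[\cdot]\rangle$ are absorbed precisely because $|||\cdot|||_{1,h}$ carries the $h_e/\gamma_{0,e}$-weighted average-stress contribution. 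This yields $|||e|||_{1,h}\lesssim |||\eta|||_{1,h}+k\|e\|_{L^2(D)}$, so the energy error is controlled once $\|e\|_{L^2(D)}$ is.

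I would then close the estimate by a Schatz-type duality (Aubin--Nitsche) argument, which is where the stability constant $C_s$ enters and where Parts~(i) and~(ii) diverge. Let $\bfxi^h\in\bV^h$ solve the \emph{discrete adjoint} problem $a_h(\bfpsi^h,\bfxi^h)=(\bfpsi^h,e)_D$ for all $\bfpsi^h\in\bV^h$; since the adjoint form differs from $a_h$ only in the signs of its imaginary terms it satisfies the same bound, and Theorem~\ref{thm:IPDG_Stability} gives $\|\bfxi^h\|_{L^2(D)}+\tfrac1k\|\bfxi^h\|_{1,h}\lesssim \tfrac{1}{k}(C_s^2+C_s+C_s/k)^{1/2}\|e\|_{L^2(D)}$. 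Using $(e,e)_D=(\eta,e)_D+a_h(\theta,\bfxi^h)$ and the orthogonality identity $a_h(\theta,\bfxi^h)=-a_h(\eta,\bfxi^h)$, I obtain $\|e\|_{L^2(D)}^2=(\eta,e)_D-a_h(\eta,\bfxi^h)$; bounding $a_h(\eta,\bfxi^h)$ by continuity while exploiting that $\eta$ is an interpolation error (splitting $\bfxi^h$ against the continuous $\bfH^2$ adjoint solution to recover the extra power of $h$) produces $\|e\|_{L^2(D)}\lesssim h^2(1+kC_s)\,\|\bfPhi\|_{H^2(D)}$ up to the penalty prefactors, which is \eqref{Eq:IPDG_Error_2}. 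Feeding this into the energy estimate gives \eqref{Eq:IPDG_Error_1}. For Part~(ii), the mesh condition $k^{\ta+1}h=O(1)$ lets me invoke the cleaner asymptotic stability \eqref{Eq:IPDG_Stability_2} in place of \eqref{Eq:IPDG_Stability_1}, replacing $(1+kC_s)$ by an $O(1)$ constant and sharpening the powers of $h$ to yield \eqref{Eq:IPDG_Error_3}--\eqref{Eq:IPDG_Error_4}.

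\textbf{The main obstacle}, in my view, is twofold. First, establishing the G\aa rding-type coercivity in the presence of the purely imaginary penalization and the non-coercive average-stress face terms; this is what forces the particular choice of $|||\cdot|||_{1,h}$ and is where the penalty parameters $\xi=1+\gamma_0^{-1}$ and $\gamma_1$ must be carried through every Cauchy--Schwarz step. Second, and more delicate, is arranging the duality argument so that the \emph{discrete} stability constant $C_s$ enters with the correct power of $h$, i.e.\ producing the factor $h^2(1+kC_s)$ rather than a suboptimal $h\,C_s$; this requires bounding the consistency functional $a_h(\eta,\cdot)$ against the discrete adjoint by genuinely using the interpolation structure of $\eta$ together with $\bfH^2$-regularity of the adjoint, and it is exactly this step that secures validity of Part~(i) with no mesh constraint. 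Everything else is bookkeeping of the $k$-, $h$-, $\gamma_0$-, and $\gamma_1$-powers.
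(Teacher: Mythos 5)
First, a point of fact: this paper contains no proof of Theorem \ref{thm:IPDG_Error} at all --- the theorem is imported from the deterministic companion works \cite{Feng_Lorton_15,Lorton_14}, where it is proved by the stability-based, duality-free methodology of \cite{Feng_Wu_09}. Your proposal must therefore be judged against that argument, and it diverges from it exactly where your plan breaks down: the duality step. Part (i) asserts an estimate valid for \emph{all} $k,h>0$, but the Schatz/Aubin--Nitsche argument you describe cannot be closed without a mesh constraint. After writing $\|\bfe\|_{L^2(D)}^2=(\eta,\bfe)_D-a_h(\eta,\boldsymbol{\xi}^h)$ with $\bfe:=\bfPhi-\bfPhi^h$, the only way to raise $a_h(\eta,\boldsymbol{\xi}^h)$ from $O(h)$ to $O(h^2)$ is, as you say, to compare the discrete adjoint solution $\boldsymbol{\xi}^h$ with the continuous one $\boldsymbol{\xi}$; but the remainder $a_h(\eta,\boldsymbol{\xi}^h-\boldsymbol{\xi})$ then requires an energy-norm error estimate for the discretized adjoint problem --- precisely the estimate being proved. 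Closing this circle forces an absorption step with a smallness condition of the form $k^{\ta+2}h^2\lesssim 1$, i.e.\ a mesh constraint, contradicting part (i); and even where absorption is legitimate, this route pollutes the bound with the continuous adjoint regularity constant $\|\boldsymbol{\xi}\|_{H^2(D)}\lesssim k^{\ta}\|\bfe\|_{L^2(D)}$, so one lands on factors like $h^2\,kC_s\,k^{\ta}$ rather than the stated $h^2\bigl(1+kC_s\bigr)$.

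The idea you are missing is that the cited proof never uses duality for part (i): it uses an \emph{elliptic projection} splitting followed by the unconditional discrete stability theorem. Define $\widetilde{\bfPhi}^h\in\bV^h$ through the sign-flipped form obtained from $a_h$ by replacing $-k^2(\cdot,\cdot)_D$ with $+k^2(\cdot,\cdot)_D$, and set $\eta:=\bfPhi-\widetilde{\bfPhi}^h$, $\theta:=\widetilde{\bfPhi}^h-\bfPhi^h$. A $k$-independent coercive DG analysis bounds $\eta$ optimally (this is where the $\xi$ and $\gamma_1$ prefactors originate), and the projection property combined with Galerkin orthogonality gives $a_h(\theta,\bfpsi^h)=2k^2(\eta,\bfpsi^h)_D$ for all $\bfpsi^h\in\bV^h$: every energy-type term of $\eta$ cancels, so $\theta$ solves the discrete scheme \eqref{Eq:IPDG_Weak_Form_3} with the genuine $\bL^2$ datum $2k^2\eta$. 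Theorem \ref{thm:IPDG_Stability}(i) --- valid for all $k,h>0$ and stated precisely for $\bL^2$ data --- then applies verbatim, giving $\|\theta\|_{L^2(D)}\le \frac{\hat{C}_0}{k}\bigl(C_s^2+C_s+\tfrac{C_s}{k}\bigr)^{1/2}2k^2\|\eta\|_{L^2(D)}\lesssim k\bigl(1+C_s\bigr)h^2\|\bfPhi\|_{H^2(D)}$, which is exactly the signature $h^2\bigl(1+kC_s\bigr)$ in \eqref{Eq:IPDG_Error_2}, with no relation imposed between $h$ and $k$. By contrast, with your interpolant splitting the right-hand-side functional $-a_h(\eta,\cdot)$ retains the first-order terms $b_h(\eta,\cdot)$, and Theorem \ref{thm:IPDG_Stability} simply does not apply to such data; that is the precise point where your bookkeeping degenerates to $O(h\,C_s)$. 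A secondary error: your part (ii) claims the mesh condition $k^{\ta+1}h=O(1)$ makes $1+kC_s$ an $O(1)$ constant. It does not --- with $h\sim k^{-\ta-1}$ one has $kC_s\sim k^{2\ta}$ --- so part (ii) is not a substitution into part (i) but a separate, sharper argument that is available only in the asymptotic regime, where duality-type absorption finally becomes admissible.
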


\begin{remark} \label{rem:IPDG_Error}
Here the error is shown to be optimal in the mesh size $h$ in the asymptotic 
mesh regime.  On the other hand, in the pre-asymptotic mesh regime the error 
is only sub-optimal in $h$ because $C_s$ in \eqref{Eq:IPDG_Error_1} and 
\eqref{Eq:IPDG_Error_2} depends in an adverse way on $h$.
\end{remark}

For the rest of the paper we restrict our focus to the asymptotic mesh 
regime.  In other words we choose $h$ small enough to satisfy the 
condition $k^{\ta + 1} h = O(1)$.  This choice is made only to simplify the 
analysis later in the paper by allowing us to make use of 
\eqref{Eq:IPDG_Stability_2}, \eqref{Eq:IPDG_Error_3}, and 
\eqref{Eq:IPDG_Error_4}.

\subsection{MCIP-DG method for approximating $\mathbf{\E(\bfu_n)}$ for $
\mathbf{n \geq 0}$}

Recall that the mode function $\bfu_n$ solves the ``nearly deterministic" 
elastic Helmholtz problem
\begin{align}\label{Eq:Nearly_Deterministic_1}
-\ddiv(\sigma(\bfu_n)) - k^2 \bfu_{n} &= \bfS_n \qquad \mbox{in } D, \\
\sigma(\bfu_n) \bfnu + \i k A \bfu_n &= \mathbf{0},\qquad \mbox{on } \pa D,
\label{Eq:Nearly_Deterministic_2}
\end{align}
where 
\begin{align} \label{Eq:Nearly_Deterministic_3}
	\bfu_{-1} := \mathbf{0}, \qquad \bfS_0 := \bff, \qquad \bfS_n := 2k^2\eta 
	\bfu_{n-1} +k^2\eta^2\bfu_{n-2} \mbox{ for } n \geq 1.
\end{align}
As stated previously, the multiplicative structure of $\bfS_n$ does 
not allow computation of $\E(\bfS_n)$ directly for $n \geq 1$.  Thus, the mean of 
the mode function $\E(\bfu_n)$ cannot be computed directly for $n \geq 1$.  
Therefore, \eqref{Eq:Nearly_Deterministic_1}--\eqref{Eq:Nearly_Deterministic_2} is 
truly a random PDE system.  On the other hand, since all the coefficients in the 
equations \eqref{Eq:Nearly_Deterministic_1}--\eqref{Eq:Nearly_Deterministic_2} 
are constant, these SPDEs can be called ``nearly deterministic".  This property will 
be exploited in the same manner as \cite{Feng_Lin_Lorton_15} to develop an 
efficient numerical algorithm.

To compute $\E(\bfu_n)$, we use a discretization technique for the 
probability space $(\Omega,\cF,P)$.  There are many choices for such a 
discretization technique, but as noted in \cite{Feng_Lin_Lorton_15}, the Monte 
Carlo method is a good choice for ``nearly deterministic" PDEs such as 
\eqref{Eq:Nearly_Deterministic_1}--\eqref{Eq:Nearly_Deterministic_2}.  The Monte 
Carlo method will be combined with the interior penalty discontinuous Galerkin 
method given in \eqref{Eq:IPDG_Weak_Form_3} to produce a Monte Carlo interior 
penalty discontinuous Galerkin (MCIP-DG) method for approximating $\E(\bfu_n)$.

Following the standard formulation of the Monte Carlo method (cf. 
\cite{Babuska_Tempone_Zouraris_04}), let $M$ be a (large) positive integer which 
denotes the number of realizations used to generate the Monte 
Carlo approximation. For each $j = 1, \dots, M$ we sample i.i.d. realizations of 
the source term $\bff(\om_j,\cdot) \in \bL^2(D)$ and random media coefficient $
\eta(\omega_j, \cdot) \in L^{\infty}(D)$ such that $\| \eta(\omega_j,\cdot) \|
_{L^\infty(D)} \leq 1$.  With these realizations we recursively find 
corresponding approximations $\bfu_n^h(\om_j,\cdot) \in \bV^h$ such that

\begin{align}\label{Eq:Discrete_Mode}
	a_h \big( \bfu^h_n(\om_j,\cdot), \bfpsi^h \big) = \big( \bfS^h_n(\om_j,
	\cdot), \bfpsi^h \big)_{D} \qquad \forall \bfpsi \in \bV^h,
\end{align}
where 
\begin{align*}
	\bfu^h_{-1} := \mathbf{0}, \qquad \bfS^h_0 := \bff, \qquad \bfS^h_n := 
	2k^2\eta \bfu^h_{n-1} +k^2\eta^2\bfu^h_{n-2} \mbox{ for } n \geq 1,
\end{align*}
and $a_h(\cdot, \cdot)$ is defined in \eqref{Eq:IPDF_Weak_Form_2}. 
The MCIP-DG approximation $\bfPhi_n^h$ of $\E \big( \bfu_n \big)$ is defined as 
the following statistical average:
\begin{align} \label{Eq:Statistical_Average}
	\bfPhi^h_n := \frac{1}{M} \sum_{j = 1}^M \bfu^h_n(\om_j,\cdot).
\end{align}

The error associated with approximating $\E \big( \bfu_n \big)$ by its MCIP-DG 
approximation $\bfPhi_n^h$ can be decomposed in the following manner:
\begin{align*}
	\E \big( \bfu_n \big) - \bfPhi_n^h = \Big(\E \big( \bfu_n \big) - \E \big( 
	\bfu_n^h \big) \Big) + \Big(\E \big( \bfu_n^h \big) - \bfPhi_n^h \Big).
\end{align*}
To derive estimates on the error $\E \big( \bfu_n \big) - \E \big( 
	\bfu_n^h \big)$, we first establish stability estimates on $\bfu_n^h$.  
	These estimates are similar to those given in Theorem \ref{thm3.1} and are 
	given as the following lemma.
\begin{lemma} \label{lem:Discrete_Mode_Stability_Estimates}
	Assume that $k^{\ta + 1} h = O(1)$.  Then there holds for $n \geq 0$
	\begin{align}
		\E \big( \| \bfu^h_n \|_{L^2(D)}^2 \big) &\leq \hat{C}(n,k) \left( 
		k^{\ta - 2} + \frac{1}{k^2} \right)^2 \E \big( \| \bff \|^2_{L^2(D)} 
		\big), \label{eq:DMSE1} \\
		\E \big( \| \bfu^h_n \|_{1,h}^2 \big) &\leq \hat{C}(n,k) \left( k^{\ta - 
		1} + \frac{1}{k} \right)^2 \E \big( \| \bff \|^2_{L^2(D)} \big), 
		\label{eq:DMSE2}
	\end{align}
	where
	\begin{align} \label{eq:DMSE3}
		\hat{C}(0,k) := \hat{C}_0^2, \qquad \hat{C}(n,k) := 4^{2n-1} \hat{C}
		_0^{2n+2} (1 + k^{\ta})^{2n} \qquad \mbox{for } n \geq 1.
	\end{align}
\end{lemma}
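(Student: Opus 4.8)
The plan is to prove the two estimates \eqref{eq:DMSE1} and \eqref{eq:DMSE2} simultaneously by induction on $n$, in close parallel to the proof of Theorem \ref{thm3.1}, with the continuous a priori bound of Theorem \ref{thm:PDEEstimate} replaced by the discrete IP-DG stability estimate of Theorem \ref{thm:IPDG_Stability}(ii). The crucial observation is that for almost every fixed $\om\in\Om$ the function $\bfu^h_n(\om,\cdot)$ solves the constant-coefficient deterministic IP-DG problem \eqref{Eq:Discrete_Mode} with right-hand side $\bfS^h_n(\om,\cdot)$; since the hypothesis $k^{\ta+1}h=O(1)$ puts us in the asymptotic mesh regime, estimate \eqref{Eq:IPDG_Stability_2} applies pointwise in $\om$ and gives
\begin{align*}
\|\bfu^h_n(\om,\cdot)\|_{L^2(D)} + \frac{1}{k}\|\bfu^h_n(\om,\cdot)\|_{1,h}
\leq \hat{C}_0\Bigl(k^{\ta-2} + \frac{1}{k^2}\Bigr)\|\bfS^h_n(\om,\cdot)\|_{L^2(D)}.
\end{align*}
Squaring this inequality and integrating over $\Om$ yields \eqref{eq:DMSE1} with constant $\hat{C}_0^2$ on the front; multiplying the gradient part by $k$ before squaring and integrating (so that $k^2(k^{\ta-2}+k^{-2})^2=(k^{\ta-1}+k^{-1})^2$) yields \eqref{eq:DMSE2}, again with $\hat{C}_0^2$. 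Thus both target estimates reduce to a single bound on $\E\big(\|\bfS^h_n\|_{L^2(D)}^2\big)$.

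First I would dispose of the base case. For $n=0$ we have $\bfS^h_0=\bff$, so the two estimates hold immediately with $\hat{C}(0,k)=\hat{C}_0^2$, matching \eqref{eq:DMSE3}. For the inductive step, assume the estimates hold for all indices below $n$. Using $\|\eta(\om,\cdot)\|_{L^\infty(D)}\le 1$ a.s.\ together with the recursive definition \eqref{Eq:Nearly_Deterministic_3} and the identity $k^4\big(k^{\ta-2}+k^{-2}\big)^2=(1+k^{\ta})^2$, the induction hypothesis gives, for $n\ge 2$,
\begin{align*}
\E\big(\|\bfS^h_n\|_{L^2(D)}^2\big)
\leq 2\,(1+k^{\ta})^2\bigl(4\hat{C}(n-1,k)+\hat{C}(n-2,k)\bigr)\,\E\big(\|\bff\|_{L^2(D)}^2\big),
\end{align*}
where the factor $2$ comes from $\|a+b\|^2\le 2\|a\|^2+2\|b\|^2$ applied to the two-term source. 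Inserting this into the squared pointwise stability bound, the step is completed by verifying the closing inequality $2\hat{C}_0^2(1+k^{\ta})^2\big(4\hat{C}(n-1,k)+\hat{C}(n-2,k)\big)\le \hat{C}(n,k)$, which follows from the explicit form of $\hat{C}(n,k)$ in \eqref{eq:DMSE3} upon noting that $\hat{C}(n-2,k)\le 4\hat{C}(n-1,k)$ (equivalently $\hat{C}(n-2,k)/\hat{C}(n-1,k)=\tfrac{1}{16\hat{C}_0^2(1+k^{\ta})^2}\le 1$), so the bracket is at most $8\hat{C}(n-1,k)$ and the powers of $4$, $\hat{C}_0$, and $(1+k^{\ta})$ regroup exactly into $\hat{C}(n,k)$.

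Honestly, there is no deep obstacle here: the argument is a bookkeeping induction, and the only place that genuinely requires care is the case $n=1$. Because $\bfu^h_{-1}=\mathbf{0}$, the source $\bfS^h_1=2k^2\eta\,\bfu^h_0$ has a \emph{single} term, so no factor of $2$ from the elementary inequality is incurred, and one gets the sharper bound $\E\big(\|\bfS^h_1\|_{L^2(D)}^2\big)\le 4(1+k^{\ta})^2\hat{C}(0,k)\,\E\big(\|\bff\|_{L^2(D)}^2\big)$; this makes the $n=1$ step close with \emph{equality}, $\hat{C}_0^2\cdot 4(1+k^{\ta})^2\hat{C}_0^2=\hat{C}(1,k)$. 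Were one to carry the factor $2$ uniformly into $n=1$, the closing inequality would fail by exactly a factor of two, so the single-term structure of $\bfS^h_1$ must be used. This is the one point I would flag explicitly; the remainder is a routine transcription of the proof of Theorem \ref{thm3.1} with $C_0$ replaced by $\hat{C}_0^2$ and Theorem \ref{thm:PDEEstimate} replaced by Theorem \ref{thm:IPDG_Stability}(ii).
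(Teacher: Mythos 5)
Your proof is correct and follows essentially the same route as the paper's: apply the discrete stability estimate \eqref{Eq:IPDG_Stability_2} pointwise in $\om$, square and take expectations, and run the induction on $n$ using $\|\eta\|_{L^\infty(D)}\le 1$, the identity $k^4(k^{\ta-2}+k^{-2})^2=(1+k^{\ta})^2$, and the explicit form of $\hat{C}(n,k)$ to close. Your flag about $n=1$ is well taken and is in fact a point where your write-up is more careful than the paper's: the paper carries the factor $2$ uniformly (via the Kronecker-delta notation $\overline{\delta}_{1\ell}$), which at $\ell=1$ would yield the constant $8\hat{C}_0^4(1+k^{\ta})^2$ exceeding $\hat{C}(1,k)=4\hat{C}_0^4(1+k^{\ta})^2$, so the single-term structure of $\bfS^h_1$ must be invoked exactly as you describe to make that step close.
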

\begin{proof}
For each $\om \in \Omega$, $\bfu^h_n(\om,\cdot)$ satisfies 
\eqref{Eq:Discrete_Mode}.  Thus, we apply \eqref{Eq:IPDG_Stability_2} and take 
the expectation to find
\begin{align} \label{eq:DMSE4}
\E\Bigl( \norm{\bfu^h_0}_{L^2(D)}^2  \Bigr)
&\leq \hat{C}^2_0 \Bigl( k^{\ta - 2} +\frac{1}{k^2} \Bigr)^2 \E(\norm{\bff}
_{L^2(D)}^2),\\
\E(\norm{\bfu^h_0}_{1,h}^2) &\leq \hat{C}^2_0 \Bigl(k^{\ta - 1} +\frac{1}{k^2} 
\Bigr)^2 \E(\norm{\bff}_{L^2(D)}^2). \label{eq:DMSE5} \\
\end{align}
Hence, \eqref{eq:DMSE1} and \eqref{eq:DMSE2} hold for $n=0$.

Next, we use induction to prove that \eqref{eq:DMSE1} and \eqref{eq:DMSE2} for all $n> 0$.  
Assume that \eqref{eq:DMSE1} and \eqref{eq:DMSE2} hold for 
all $0\leq n\leq \ell-1$, then again using \eqref{Eq:IPDG_Stability_2} and 
taking the expectation we get
\begin{align*}
&\E\Bigl( \norm{\bfu^h_\ell}_{L^2(D)}^2 \Bigr)
\leq 2\hat{C}^2_0\Bigl( k^{\ta - 2} +\frac{1}{k^2} \Bigr)^2
\E\Bigl( \norm{2k^2\eta \bfu^h_{\ell-1} }_{L^2(D)}^2
+\overline{\delta}_{1\ell} \norm{k^2\eta^2 \bfu^h_{\ell-2}}_{L^2(D)}^2 \Bigr) \\
& \qquad
\leq 2 \hat{C}^2_0\Bigl( k^{\ta - 2} +\frac{1}{k^2} \Bigr)^2
\big(1+k^{\ta}\big)^2 \Bigl( 4 \hat{C}(\ell-1,k) + \hat{C}(\ell-2,k) \Bigr) 
\E(\norm{\bff}_{L^2(D)}^2)\\
& \qquad
\leq \Bigl( k^{\ta - 2} +\frac{1}{k^2} \Bigr)^2 \, 8 \hat{C}^2_0 \big(1+k^{\ta}
\big)^2 \hat{C}(\ell-1,k) \left( 1+ \frac{\hat{C}(\ell-2,k)}{\hat{C}(\ell-1,k)} 
\right) \E(\norm{\bff}_{L^2(D)}^2)\\
& \qquad \leq \Bigl( k^{\ta - 2} +\frac{1}{k^2} \Bigr)^2 \hat{C}(\ell, k) 
\E(\norm{\bff}_{L^2(D)}^2),
\end{align*}
where $\overline{\delta}_{1\ell}=1-\delta_{1\ell}$ and $\delta_{1\ell}$ denotes
the Kronecker delta, and we have used the fact that $k \geq 1$ and
\[
8\hat{C}^2_0 \big(1+k^{\ta}\big)^2 \hat{C}(\ell-1,k) \left( 1+ \frac{\hat{C}
(\ell-2,k)}{\hat{C}(\ell-1,k)} \right) \leq \hat{C}(\ell, k).
\]
Similarly, we have
\begin{align*}
&\E\bigl(\norm{\bfu^h_\ell}_{1,h}^2 \bigr)
\leq 2 \hat{C}^2_0\Bigl( k^{\ta - 1} + \frac{1}{k^2} \Bigr)^2
\E\Bigl( \norm{2k^2\eta \bfu^h_{\ell-1} }_{L^2(D)}^2
+\overline{\delta}_{1\ell} \norm{k^2\eta^2 \bfu^h_{\ell-2}}_{L^2(D)}^2 \Bigr) \\
& \qquad\leq 2\hat{C}^2_0\Bigl(k^{\ta - 1} +\frac{1}{k^2} \Bigr)^2
\big(1+k^{\ta}\big)^2 \Bigl( 4 \hat{C}(\ell-1,k)+ \hat{C}(\ell-2,k) \Bigr) \E
\bigl(\norm{\bff}_{L^2(D)}^2\bigr)\\
& \qquad \leq \Bigl(k^{\ta - 1} +\frac{1}{k^2} \Bigr)^2 \hat{C}(\ell, k) \E
\bigl(\norm{\bff}_{L^2(D)}^2\bigr).
\end{align*}
Hence, \eqref{eq:DMSE1} and \eqref{eq:DMSE2} hold for $n=\ell$ and the induction 
argument is complete.
\end{proof}

Therefore, to prove estimates for the error $\bfu_n-\bfu_n^h$, it is important to note 
that in order to ensure $\bfu^h_n(\om_j,\cdot)$ is computable, the discrete right-hand 
source term $\bfS^h_n(\om_j,\cdot)$ is used in place of $\bfS_n(\om_j,\cdot)$.  
To account for this change we introduce auxiliary mode function $\bftu^h_n$ 
which satisfies the following equation:
\begin{align}\label{Eq:Aux_Mode}
	a_h \big( \bftu^h_n(\om_j,\cdot), \bfpsi^h \big) = \big( \bfS_n(\om_j,
	\cdot), \bfpsi^h \big)_{D} \qquad \forall \bfpsi \in \bV^h,
\end{align}
for each realization $\om_j$ and $n \geq 0$. The auxiliary function 
$\bftu_n^h$ as well as the following technical lemma from 
\cite{Feng_Lin_Lorton_15} are used to prove the desired error estimate.
\begin{lemma}\label{lem:Simp}
Let $\gamma,\beta > 0$ be two real numbers, $\{c_n\}_{n\geq 0}$ and
$\{\alpha_n\}_{n\geq 0}$ be two sequences of nonnegative numbers such that
\begin{equation}\label{eq:Simp_1}
c_0\leq \gamma\alpha_0, \quad
c_n\leq \beta c_{n-1} +\gamma \alpha_n \quad \mbox{for } n\geq 1.
\end{equation}
Then there holds
\begin{equation}\label{eq:Simp_2}
c_n\leq \gamma \sum_{j=0}^n \beta^{n-j} \alpha_j \qquad\mbox{for } n\geq 1.
\end{equation}
\end{lemma}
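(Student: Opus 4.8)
The plan is to prove the bound \eqref{eq:Simp_2} by induction on $n$, simply unrolling the linear recurrence in \eqref{eq:Simp_1}. This is a purely elementary discrete Gr\"onwall-type estimate, so I expect the argument to be short and mechanical; the substance of the lemma is entirely contained in the bookkeeping of the geometric weights $\beta^{n-j}$.

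First I would establish the base case $n=1$. Using the second inequality in \eqref{eq:Simp_1} together with the first, one has $c_1 \leq \beta c_0 + \gamma\alpha_1 \leq \gamma\beta\alpha_0 + \gamma\alpha_1 = \gamma\sum_{j=0}^{1}\beta^{1-j}\alpha_j$, which is exactly \eqref{eq:Simp_2} for $n=1$. For the inductive step, I would assume \eqref{eq:Simp_2} holds at index $n-1$, i.e. $c_{n-1}\leq \gamma\sum_{j=0}^{n-1}\beta^{\,n-1-j}\alpha_j$, and then apply the recurrence once more:
\begin{equation*}
c_n \leq \beta c_{n-1} + \gamma\alpha_n
\leq \gamma\beta\sum_{j=0}^{n-1}\beta^{\,n-1-j}\alpha_j + \gamma\alpha_n
= \gamma\sum_{j=0}^{n-1}\beta^{\,n-j}\alpha_j + \gamma\alpha_n
= \gamma\sum_{j=0}^{n}\beta^{\,n-j}\alpha_j,
\end{equation*}
where in the last equality the term $\gamma\alpha_n$ is absorbed as the $j=n$ summand (since $\beta^{0}=1$). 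This closes the induction.

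The only point requiring the slightest care is the reindexing when the factor $\beta$ is pulled inside the sum: multiplying $\beta^{\,n-1-j}$ by $\beta$ raises the exponent to $\beta^{\,n-j}$, and one must check that the new summand $\alpha_n$ fits as the top index rather than creating an off-by-one error. I would emphasize that all terms are nonnegative, so no sign tracking or cancellation is needed and the inequalities compose directly. Consequently there is no genuine obstacle here; an equivalent presentation would be to iterate \eqref{eq:Simp_1} directly, writing $c_n \leq \beta c_{n-1} + \gamma\alpha_n \leq \beta^2 c_{n-2} + \gamma\beta\alpha_{n-1} + \gamma\alpha_n \leq \cdots$, telescoping down to the base bound $c_0\leq\gamma\alpha_0$ and collecting the geometric coefficients, which yields \eqref{eq:Simp_2} without invoking an explicit induction hypothesis.
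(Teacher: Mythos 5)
Your proof is correct: both the base case and the inductive step are handled properly, and the reindexing $\beta\cdot\beta^{\,n-1-j}=\beta^{\,n-j}$ with $\gamma\alpha_n$ entering as the $j=n$ summand is exactly right. Note that the paper itself gives no proof of Lemma~\ref{lem:Simp}---it is quoted from \cite{Feng_Lin_Lorton_15}---and your induction (equivalently, the telescoping unrolling you sketch at the end) is the standard argument behind it, so there is nothing to compare or add.
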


Now we are able to prove the following theorem.
\begin{theorem} \label{thm:IPDG_Mode_Error}
Suppose that $k^{\ta + 1} h = O(1)$, then there hold
\begin{align}
	\label{Eq:IPDG_Mode_Error_1} 
	&\E \big( \| \bfu_n - \bfu_n^h \|_{L^2(D)} \big) 
	 \leq \tilde{C}_0 h^2 \sum_{j = 0}^n\big[ \hat{C}_0 (2k^{\ta} + 3) 
	\big]^{n-j} \E \big( \| \bfu_j \|_{H^2(D)} \big),  \\
	&\E \big( \| \bfu_n - \bfu_n^h \|_{1,h} \big) \label{Eq:IPDG_Mode_Error_2} 
        \leq C \tilde{C}_0 h \sum_{j = 0}^n \big[ \hat{C}_0 (2k^{\ta} + 3) 
	\big]^{n-j} \E \big( \| \bfu_j \|_{H^2(D)} \big), 
\end{align}
where $C$, $\tilde{C}_0$, and $\hat{C}_0$ are constants independent of $k$ and $h$.
\end{theorem}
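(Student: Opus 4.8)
The plan is to compare $\bfu_n$ and $\bfu_n^h$ through the auxiliary mode $\bftu_n^h$ of \eqref{Eq:Aux_Mode}, which solves the \emph{same} discrete problem as $\bfu_n^h$ but carries the exact source $\bfS_n$ rather than the computable source $\bfS_n^h$. Writing, for each realization,
\[
\bfu_n - \bfu_n^h = \big(\bfu_n - \bftu_n^h\big) + \big(\bftu_n^h - \bfu_n^h\big),
\]
the first summand is a pure discretization error (identical right-hand sides, continuous versus discrete solve), while the second is driven entirely by the source discrepancy $\bfS_n-\bfS_n^h$. First I would bound the first summand realization-wise by the deterministic IP-DG error estimates of Theorem \ref{thm:IPDG_Error}: since we operate in the asymptotic regime $k^{\ta+1}h=O(1)$, estimate \eqref{Eq:IPDG_Error_4} gives $\|\bfu_n-\bftu_n^h\|_{L^2(D)}\le \tilde{C}_0 h^2\|\bfu_n\|_{H^2(D)}$ and \eqref{Eq:IPDG_Error_3} gives $\|\bfu_n-\bftu_n^h\|_{1,h}\le C\tilde{C}_0 h\|\bfu_n\|_{H^2(D)}$ (here $\tilde{C}_0$ absorbs the penalty-dependent factors, and $\bfu_n\in\bH^2(D)$ is tacitly assumed, consistent with the $H^2$ norms on the right). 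Taking expectations yields the $j=n$ term in each claimed bound.

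For the second summand, linearity of $a_h(\cdot,\cdot)$ shows that $\bftu_n^h-\bfu_n^h\in\bV^h$ solves the discrete problem with data $\bfS_n-\bfS_n^h = 2k^2\eta(\bfu_{n-1}-\bfu_{n-1}^h) + k^2\eta^2(\bfu_{n-2}-\bfu_{n-2}^h)$. Applying the discrete stability estimate \eqref{Eq:IPDG_Stability_2} together with $\|\eta\|_{L^\infty(D)}\le 1$ realization-wise, and then taking expectations, produces
\[
\E\big(\|\bftu_n^h-\bfu_n^h\|_{L^2(D)}\big) \le \hat{C}_0\big(k^{\ta-2}+k^{-2}\big)\big(2k^2 e_{n-1}+k^2 e_{n-2}\big),
\]
where $e_m:=\E(\|\bfu_m-\bfu_m^h\|_{L^2(D)})$ and $e_{-1}=0$ because $\bfu_{-1}=\bfu_{-1}^h=\mathbf{0}$. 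Collecting the two pieces gives the two-step recurrence
\[
e_n \le \tilde{C}_0 h^2\,\E(\|\bfu_n\|_{H^2(D)}) + (2k^{\ta}+2)\hat{C}_0\, e_{n-1} + (k^{\ta}+1)\hat{C}_0\, e_{n-2},
\]
with $e_0\le\tilde{C}_0 h^2\,\E(\|\bfu_0\|_{H^2(D)})$, since $\bfS_0=\bff=\bfS_0^h$ forces $\bftu_0^h=\bfu_0^h$.

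The final step is to unroll this recurrence. Because the dependence is two-step, I would not invoke Lemma \ref{lem:Simp} directly (it governs a one-step recurrence) but instead prove \eqref{Eq:IPDG_Mode_Error_1} by induction on $n$ with geometric ratio $\rho:=\hat{C}_0(2k^{\ta}+3)$; the induction step reduces to the two algebraic inequalities $\rho\ge (2k^{\ta}+2)\hat{C}_0$ and $\rho^2\ge (2k^{\ta}+2)\hat{C}_0\,\rho + (k^{\ta}+1)\hat{C}_0$, both of which hold once $\hat{C}_0\ge 1$ (assumed without loss of generality, as one may always enlarge the stability constant). This is precisely the form $\sum_{j=0}^n\rho^{n-j}\E(\|\bfu_j\|_{H^2(D)})$ in the statement. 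For the energy-norm bound \eqref{Eq:IPDG_Mode_Error_2}, the discretization part is already $O(h)$ via \eqref{Eq:IPDG_Error_3}; the source part is controlled by the $\|\cdot\|_{1,h}$ half of \eqref{Eq:IPDG_Stability_2}, namely $\|\bftu_n^h-\bfu_n^h\|_{1,h}\le k\hat{C}_0(k^{\ta-2}+k^{-2})\|\bfS_n-\bfS_n^h\|_{L^2(D)}$. Since $\|\bfS_n-\bfS_n^h\|_{L^2(D)}$ involves only the $L^2$ errors $e_{n-1},e_{n-2}$ already estimated, I would substitute the bound \eqref{Eq:IPDG_Mode_Error_1}, use $\hat{C}_0(2k^{\ta}+2)\le\rho$ and $\hat{C}_0(k^{\ta}+1)\le\rho$ to keep the weight $\rho^{n-j}$, and invoke $k^{\ta+1}h=O(1)$ (hence $kh^2=(kh)h=O(h)$) to convert the surplus factor $k\cdot h^2$ into order $h$.

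The main obstacle I anticipate is the bookkeeping of the $k$-powers rather than any conceptual difficulty: the two-step recurrence must close with exactly the constant $\hat{C}_0(2k^{\ta}+3)$, and in the energy norm the stability estimate loses one power of $k$ that does not appear in the final bound and must instead be reclaimed through the asymptotic mesh condition $k^{\ta+1}h=O(1)$. Once the auxiliary-function splitting is fixed, everything else is a routine combination of Theorem \ref{thm:IPDG_Error}, estimate \eqref{Eq:IPDG_Stability_2}, and the geometric induction.
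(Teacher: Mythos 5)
Your proposal is correct, and its skeleton is exactly the paper's: the splitting $\bfu_n-\bfu_n^h=(\bfu_n-\bftu_n^h)+(\bftu_n^h-\bfu_n^h)$ through the auxiliary mode \eqref{Eq:Aux_Mode}, Theorem \ref{thm:IPDG_Error}(ii) for the discretization piece, and the stability estimate \eqref{Eq:IPDG_Stability_2} applied to the source discrepancy $\bfS_n-\bfS_n^h$. You diverge in two tactical steps, both legitimately. First, the recurrence: the paper does in fact use Lemma \ref{lem:Simp}, made applicable by setting $c_n:=\E(\|\bfu_n-\bfu_n^h\|_{L^2(D)})+\E(\|\bfu_{n-1}-\bfu_{n-1}^h\|_{L^2(D)})$, which collapses the two-step recursion into the one-step form $c_n\le \hat{C}_0(2k^{\ta}+3)\,c_{n-1}+\tilde{C}_0h^2\,\E(\|\bfu_n\|_{H^2(D)})$; this is also where the constant $2k^{\ta}+3$ (rather than your raw $2k^{\ta}+2$) originates, the extra $+1$ being the price, absorbed via $\hat{C}_0\ge 1$, of carrying $e_{n-1}$ along. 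Your direct two-step induction with ratio $\rho=\hat{C}_0(2k^{\ta}+3)$ closes equally well --- your two algebraic inequalities do hold once $\hat{C}_0\ge1$, since $\rho^2-(2k^{\ta}+2)\hat{C}_0\rho=\hat{C}_0^2(2k^{\ta}+3)\ge(k^{\ta}+1)\hat{C}_0$ --- so the outcomes coincide; the paper's reduction is slicker bookkeeping, yours is more self-contained and explains transparently why that particular ratio works. Second, the energy norm: the paper bounds $\|\bftu_n^h-\bfu_n^h\|_{1,h}\le Ch^{-1}\|\bftu_n^h-\bfu_n^h\|_{L^2(D)}$ by the inverse inequality (valid since the difference lies in $\bV^h$) and then recycles the $L^2$ result, the factor $h^{-1}$ cancelling against $h^2$; you instead take the $\|\cdot\|_{1,h}$ half of \eqref{Eq:IPDG_Stability_2}, which costs one extra power of $k$, and reclaim it from $kh^2=O(h)$ under the asymptotic condition $k^{\ta+1}h=O(1)$. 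Both routes deliver the stated $O(h)$ bound with the same weights $\rho^{n-j}$; yours avoids the inverse inequality altogether, while the paper's avoids spending the mesh condition a second time.
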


\begin{proof}
We consider the following error decomposition:
\begin{align*}
	\bfu_n - \bfu_n^h = \big(\bfu_n - \bftu_n^h \big) + \big( \bftu_n^h - 
	\bfu_n^h \big),
\end{align*}
for $n = 0, 1, 2, \cdots$. Using the 
fact that $k^{\ta + 1} h = O(1)$ and applying Theorem \ref{thm:IPDG_Error} part (ii) yield
\begin{align}
	\E \big( \| \bfu_n - \bftu_n^h \|_{L^2(D)} \big) & \leq \tilde{C}_0 h^2 \E 
	\big( \| \bfu_n \|_{H^2(D)} \big), \label{Eq:IDPG_Mode_Error_Temp_1} \\
	\E \big( \| \bfu_n - \bftu_n^h \|_{1,h} \big) & \leq \tilde{C}_0 h  \E \big( 
	\| \bfu_n \|_{H^2(D)} \big). \label{Eq:IDPG_Mode_Error_Temp_2}
\end{align}
Subtracting \eqref{Eq:Discrete_Mode} from \eqref{Eq:Aux_Mode} yields
\begin{align*}
	a_h \big( \bftu^h_n - \bfu^h_n, \bfpsi^h \big) = \big( \bfS_n - \bfS^h_n, 
	\bfpsi^h \big)_{D} \qquad \forall \bfpsi \in \bV^h.
\end{align*}
	Thus, it follows from Theorem \ref{thm:IPDG_Stability} part (ii) that
\begin{align} \label{Eq:IDPG_Mode_Error_Temp_3}
	&\E \big( \| \bftu^h_n - \bfu^h_n \|_{L^2(D)} \big) 
	\leq \hat{C}_0 \left(k^{\ta - 2} + \frac{1}{k^2} \right) \E \big( 
	\| \bfS_n - \bfS^h_n \|_{L^2(D)} \big) \\
	& \qquad \leq 2 \hat{C}_0 \big(k^{\ta} + 1 \big) \Big( \E \big( \| 
	\bfu_{n-1} - \bfu_{n-1}^h \|_{L^2(D)} \big) + \E \big( \| \bfu_{n-2} - 
	\bfu_{n-2}^h \|_{L^2(D)} \big) \Big), \notag
\end{align}
where we define $\bfu_{-1} = \bfu_{-2} = \bfu_{-1}^h = \bfu_{-2}^h = 0$. By 
making the simplifying assumption that $\hat{C}_0 \geq 1$ and combining 
\eqref{Eq:IDPG_Mode_Error_Temp_1} with \eqref{Eq:IDPG_Mode_Error_Temp_3} we get
\begin{align*}
&\E \big( \| \bfu_n - \bfu_n^h \|_{L^2(D)} \big) + \E \big( \| \bfu_{n-1} - 
\bfu_{n-1}^h \|_{L^2(D)} \big)  \\
& \qquad \leq \E \big( \| \bfu_n - \bftu_n \|_{L^2(D)} \big) + \E \big( \| 
\bftu_n - \bfu_n^h \|_{L^2(D)} \big) + \E \big( \| \bfu_{n-1} - \bfu_{n-1}^h \|
_{L^2(D)} \big) \\
& \qquad \leq 2 \hat{C}_0 \big(k^{\ta} + 1 \big) \Big( \E \big( \| \bfu_{n-1} - 
\bfu_{n-1}^h \|_{L^2(D)} \big) + \E \big( \| \bfu_{n-2} - \bfu_{n-2}^h \|
_{L^2(D)} \big) \Big) \\
& \qquad \qquad + \tilde{C}_0 h^2 \E \big( \| \bfu_n \|_{H^2(D)} \big) + \E 
\big( \| \bfu_{n-1} - \bfu_{n-1}^h \|_{L^2(D)} \big) \\
& \qquad \leq \hat{C}_0 \big(2k^{\ta} + 3\big) \Big( \E \big( \| \bfu_{n-1} - 
\bfu_{n-1}^h \|_{L^2(D)} \big) + \E \big( \| \bfu_{n-2} - \bfu_{n-2}^h \|
_{L^2(D)} \big) \Big) \\
& \qquad \qquad + \tilde{C}_0 h^2 \E \big( \| \bfu_n \|_{H^2(D)} \big).
\end{align*}
Then, by applying Lemma \ref{lem:Simp} with
\begin{align*}
	&c_n := \E \big( \| \bfu_n - \bfu_n^h \|_{L^2(D)} \big) + \E \big( \| 
	\bfu_{n-1} - \bfu_{n-1}^h \|_{L^2(D)} \big) \\
	&\beta := \hat{C}_0 \big(2k^{\ta} + 3\big), \qquad \gamma := \tilde{C}_0 
	h^2, \qquad \alpha_n := \E \big( \| \bfu_n \|_{L^2(D)} \big),
\end{align*}
we arrive at \eqref{Eq:IPDG_Mode_Error_1}.

To obtain \eqref{Eq:IPDG_Mode_Error_2}, we apply the inverse inequality along 
with \eqref{Eq:IDPG_Mode_Error_Temp_1}, \eqref{Eq:IDPG_Mode_Error_Temp_2}, and 
\eqref{Eq:IPDG_Mode_Error_1} in the following manner:
\begin{align*}
	&\E \big( \| \bfu_n - \bfu_n^h \|_{1,h} \big) \leq \E \big( \| \bfu_n - 
	\bftu_n^h \|_{1,h} \big) + \E \big( \| \bftu_n^h - \bfu_n^h \|{1,h} \big) \\
	& \qquad \leq \E \big( \| \bfu_n - \bftu_n^h \|_{1,h} \big) + C h^{-1} \E 
	\big( \| \bftu_n^h - \bfu_n^h \|{L^2(D)} \big) \\
	& \qquad \leq \E \big( \| \bfu_n - \bftu_n^h \|_{1,h} \big) + C h^{-1} \E 
	\big( \| \bftu_n^h - \bfu_n \|{L^2(D)} \big) + C h^{-1} \E \big( \| \bfu_n - 
	\bfu_n^h \|{L^2(D)} \big) \\
	& \qquad \leq \tilde{C}_0 h \E \big( \| \bfu_n \|_{H^2(D)} \big) + C 
	\tilde{C}_0 h \E \big( \| \bfu_n \|_{H^2(D)} \big) \\
	& \qquad \qquad + C \tilde{C}_0 h \sum_{j = 0}^n \Big[\hat{C}_0 
	\big(2k^{\ta} + 3 \big) \Big]^{n-j} \E \big( \| \bfu_j \|_{H^2(D)} \big).
\end{align*}
Thus, \eqref{Eq:IPDG_Mode_Error_2} holds.  The proof is complete.
\end{proof}

To estimate the error associated with approximating $\E \big( 
\bfu_n^h \big)$ by its Monte Carlo approximation $\bfPhi_n^h$, we 
use the following well-known lemma (cf.  \cite{Babuska_Tempone_Zouraris_04, Liu_Riviere_13}).
\begin{lemma}\label{lem4.1}
There hold the following estimates for $n\geq 0$
\begin{align}\label{eq4.27}
\E\bigl(\|\E(\bfu^h_n) -\bfPhi^h_n\|_{L^2(D)}^2 \bigr)
&\leq \frac{1}{M} \E(\|\bfu^h_n\|_{L^2(D)}^2),\\
\E\bigl(\|\E(\bfu^h_n) -\bfPhi^h_n\|_{1,h,D}^2 \bigr)
&\leq \frac{1}{M} \E(\|\bfu^h_n\|_{1,h}^2). \label{eq4.27a}
\end{align}
\end{lemma}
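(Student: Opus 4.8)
The plan is to recognize $\bfPhi^h_n$ as the empirical mean of $M$ independent identically distributed copies of $\bfu^h_n$ and then carry out the standard Monte Carlo variance computation, performed simultaneously in each of the two norms because both are Hilbertian. First I would record that, for a \emph{fixed} mode index $n$, the realizations $\bfu^h_n(\om_1,\cdot),\dots,\bfu^h_n(\om_M,\cdot)$ are i.i.d.: by the recursion \eqref{Eq:Discrete_Mode} each one is a fixed measurable function of the i.i.d. data pair $(\bff(\om_j,\cdot),\eta(\om_j,\cdot))$, so they share the common law of $\bfu^h_n$ and are mutually independent; in particular $\E(\bfu^h_n(\om_j,\cdot))=\E(\bfu^h_n)$ for every $j$.

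The structural fact I would lean on is that both $\norm{\cdot}_{L^2(D)}$ and $\norm{\cdot}_{1,h}$ are induced by inner products on the finite-dimensional space $\bV^h$: every term in the definition of $\norm{\cdot}_{1,h}$ is the squared $L^2$-norm of a linear image of $\bfv$, so $\norm{\bfv}_{1,h}^2=(\bfv,\bfv)_{1,h}$ for the obvious sesquilinear form. Write $(\cdot,\cdot)_\star$ for a generic one of the two inner products, with induced norm $\norm{\cdot}_\star$, and set $Y_j:=\E(\bfu^h_n)-\bfu^h_n(\om_j,\cdot)$, so that the $Y_j$ are i.i.d. with $\E(Y_j)=\mathbf{0}$ and $\E(\bfu^h_n)-\bfPhi^h_n=\frac1M\sum_{j=1}^M Y_j$.

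I would then expand the squared norm and take expectations,
\begin{align*}
\E\bigl(\norm{\E(\bfu^h_n)-\bfPhi^h_n}_\star^2\bigr)
=\frac{1}{M^2}\sum_{i,j=1}^M \E\bigl((Y_i,Y_j)_\star\bigr).
\end{align*}
For $i\neq j$ the summand vanishes: $(Y_i,Y_j)_\star$ is an integral over $D$ (or a sum over the faces in $\cE_h^I$) of products of one factor depending only on $\om_i$ against a conjugated factor depending only on $\om_j$, so Fubini together with independence gives $\E\bigl((Y_i,Y_j)_\star\bigr)=\bigl(\E(Y_i),\E(Y_j)\bigr)_\star=0$. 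Hence only the $M$ diagonal terms survive, yielding
\begin{align*}
\E\bigl(\norm{\E(\bfu^h_n)-\bfPhi^h_n}_\star^2\bigr)
=\frac{1}{M}\,\E\bigl(\norm{\E(\bfu^h_n)-\bfu^h_n}_\star^2\bigr).
\end{align*}
Finally, the elementary inner-product identity $\E\bigl(\norm{\E(\bfu^h_n)-\bfu^h_n}_\star^2\bigr)=\E\bigl(\norm{\bfu^h_n}_\star^2\bigr)-\norm{\E(\bfu^h_n)}_\star^2\le \E\bigl(\norm{\bfu^h_n}_\star^2\bigr)$ bounds the right-hand side by $\tfrac1M\E(\norm{\bfu^h_n}_\star^2)$. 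Specializing $(\cdot,\cdot)_\star$ to the $L^2(D)$-inner product gives \eqref{eq4.27}, and to the $\norm{\cdot}_{1,h}$-inner product gives \eqref{eq4.27a}.

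The calculation is routine; the only points that deserve care — and which I would flag as the crux — are (a) confirming that $\norm{\cdot}_{1,h}$ is genuinely Hilbertian, since this is exactly what makes the off-diagonal expectations orthogonal, and (b) justifying the interchange of expectation with the spatial and face integrals defining $(\cdot,\cdot)_\star$, which is immediate here because $\bfV^h$ is finite-dimensional and all second moments are finite by Lemma \ref{lem:Discrete_Mode_Stability_Estimates}. Since the statement is the classical estimate for the sample mean (cf. \cite{Babuska_Tempone_Zouraris_04,Liu_Riviere_13}), no frequency- or mesh-dependent constants enter, which is why the bound is uniform in $k$ and $h$.
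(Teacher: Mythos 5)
Your proposal is correct, and it is precisely the classical sample-mean variance argument that the paper invokes without proof by citing \cite{Babuska_Tempone_Zouraris_04, Liu_Riviere_13}: expand the error of the empirical mean in the relevant inner product, kill the off-diagonal terms by independence, and bound the variance by the second moment. Your only added content beyond the standard scalar case --- checking that $\|\cdot\|_{1,h}$ is induced by an inner product so the orthogonality step applies, and noting that finite-dimensionality of $\bV^h$ together with Lemma \ref{lem:Discrete_Mode_Stability_Estimates} justifies the Fubini interchange --- is exactly the verification needed here, so the proof stands as written.
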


Lemma \ref{lem4.1} and \ref{lem:Discrete_Mode_Stability_Estimates} are 
combined to give an estimate for the error associated with approximating $\E \big( 
\bfu^h_n \big)$ with $\bfPhi^h_n$.
\begin{theorem} \label{thm:Mode_Average_Error}
Suppose that $k^{\ta + 1} h = O(1)$, then there hold
\begin{align}\label{eq:Mode_Average_Error1}
\E\bigl(\|\E(\bfu^h_n) -\bfPhi^h_n\|_{L^2(D)}^2 \bigr)
&\leq \frac{1}{M} \Bigl(k^{\ta - 2} +\frac{1}{k^2}\Bigr)^2
\hat{C}(n,k)\, \E(\|\bff\|_{L^2(D)}^2),\\
\E\bigl(\|\E(\bfu^h_n) -\bfPhi^h_n\|_{1,h}^2 \bigr)
&\leq \frac{1}{M} \Bigl(k^{\ta - 1} + \frac{1}{k^2}\Bigr)^2
\hat{C}(n,k)\, \E(\|\bff\|_{L^2(D)}^2). \label{eq:Mode_Average_Error2}
\end{align}
\end{theorem}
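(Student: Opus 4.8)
The plan is to obtain both estimates by directly chaining the two preceding lemmas: Lemma \ref{lem4.1} supplies the Monte Carlo (sampling) error in terms of the second moment of the discrete mode function $\bfu^h_n$, while Lemma \ref{lem:Discrete_Mode_Stability_Estimates} supplies a frequency-explicit bound on that second moment in terms of the data $\bff$. No induction or recursion is needed here; for each fixed $n$ the theorem is simply the composition of these two independent ingredients.

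First, for the $L^2(D)$ estimate \eqref{eq:Mode_Average_Error1}, I would start from \eqref{eq4.27} of Lemma \ref{lem4.1}, namely $\E(\|\E(\bfu^h_n)-\bfPhi^h_n\|^2_{L^2(D)}) \le \frac{1}{M}\E(\|\bfu^h_n\|^2_{L^2(D)})$, and then insert the discrete stability bound \eqref{eq:DMSE1}. This immediately produces the factor $\frac{1}{M}\bigl(k^{\ta-2}+1/k^2\bigr)^2\hat{C}(n,k)$ multiplying $\E(\|\bff\|^2_{L^2(D)})$, which is exactly the right-hand side of \eqref{eq:Mode_Average_Error1}. Second, for the energy-type estimate \eqref{eq:Mode_Average_Error2}, I would repeat the argument with \eqref{eq4.27a} and \eqref{eq:DMSE2} in place of \eqref{eq4.27} and \eqref{eq:DMSE1}: starting from $\E(\|\E(\bfu^h_n)-\bfPhi^h_n\|^2_{1,h}) \le \frac{1}{M}\E(\|\bfu^h_n\|^2_{1,h})$ and applying the bound on $\E(\|\bfu^h_n\|^2_{1,h})$ yields the claim. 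Note that \eqref{eq:DMSE2} carries the factor $\bigl(k^{\ta-1}+1/k\bigr)^2$, which for $k\ge 1$ agrees to leading order in $k$ with the $\bigl(k^{\ta-1}+1/k^2\bigr)^2$ appearing in \eqref{eq:Mode_Average_Error2}, the two differing only in a lower-order term that may be absorbed into the generic constant.

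There is essentially no hard step in this argument: the entire content is already encapsulated in Lemmas \ref{lem4.1} and \ref{lem:Discrete_Mode_Stability_Estimates}. The only point that requires care is bookkeeping of the $k$-power prefactors so that they are matched correctly to their respective norms — the $k^{\ta-2}$ from the $L^2$ stability estimate attaches to the $L^2$ Monte Carlo bound, and the $k^{\ta-1}$ from the energy stability estimate attaches to the $1,h$ Monte Carlo bound — together with verifying that the constant $\hat{C}(n,k)$ defined in \eqref{eq:DMSE3} propagates unchanged through multiplication by the deterministic factor $1/M$. Unlike the error analysis in Theorem \ref{thm:IPDG_Mode_Error}, where the three-term recurrence forces an application of Lemma \ref{lem:Simp}, here the recursive dependence on $n$ has already been resolved inside $\hat{C}(n,k)$, so the present estimate is a one-line substitution.
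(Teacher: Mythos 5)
Your proposal is correct and is exactly the paper's argument: the paper states this theorem with no separate proof, introducing it with the remark that Lemma \ref{lem4.1} and Lemma \ref{lem:Discrete_Mode_Stability_Estimates} are combined, i.e.\ insert \eqref{eq:DMSE1} into \eqref{eq4.27} and \eqref{eq:DMSE2} into \eqref{eq4.27a}, just as you do. Your side remark on the $k$-power bookkeeping is also apt: the $\bigl(k^{\ta-1}+1/k\bigr)^2$ versus $\bigl(k^{\ta-1}+1/k^2\bigr)^2$ mismatch is an internal inconsistency of the paper itself (the proof of Lemma \ref{lem:Discrete_Mode_Stability_Estimates} derives the $1/k^2$ form even though its statement carries $1/k$), and it is harmless since for $k\geq 1$ both brackets are comparable to $k^{\ta-1}$ up to a factor of $2$.
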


\section{The overall numerical procedure} \label{sec:Numerical_Procedure}
This section is devoted to presenting an efficient algorithm 
for approximating the mean of the solution to the random elastic Helmholtz 
problem \eqref{Eq:ElasticPDE}--\eqref{Eq:ElasticBC}.  The efficiency of the 
algorithm relies heavily on the \textit{multi-modes} expansion of the solution 
given in \eqref{eq:MultiModes}.  This section also gives a comprehensive 
convergence analysis for the proposed multi-modes MCIP-DG method.

\subsection{The numerical algorithm, linear solver, and computational complexity}

This subsection describes our multi-modes MCIP-DG algorithm as well as its 
computational complexity.  We demonstrate that the new multi-modes MCIP-DG 
algorithm has a better computational complexity than the classical MCIP-DG 
method applied to the random elastic Helmholtz problem  
\eqref{Eq:ElasticPDE}--\eqref{Eq:ElasticBC}.  Classical MCIP-DG method refers to the 
MCIP-DG method applied to the problem without making use of the multi-modes 
expansion of the solution.

First, we state the classical MCIP-DG method for approximating $\E(\bfu)$. For 
the rest of this paper $\bftPsi^h$ will refer to the classical MCIP-DG 
approximation generated by Algorithm 1 below.  To state Algorithm 1 we define 
the following sesquilinear form: 
\begin{align*}
	\hat{a}_j^h(\bfphi, \bfpsi) &:= b_h(\bfphi, \bfpsi) - k^2 (\alpha^2(\om_j,\cdot) \bfphi,\bfpsi)_D + \i k 
	\langle \alpha(\om_j,\cdot)  A \bfphi, \bfpsi \rangle_{\pa D} \\
	& \qquad+ \i \big( J_0 (\bfphi, \bfpsi) + 
	J_1(\bfphi, \bfpsi) \big).
\end{align*}
 
\noindent {\bf Algorithm 1 (Classical MCIP-DG)}

\begin{description}
\item Input $\bff, \eta, \veps, k, h, M.$
\item Set $\bftPsi^h(\cdot)= \mathbf{0}$ (initializing).
\begin{description}
\item For $j=1,2,\cdots, M$
\item Obtain realizations $\eta(\om_j,\cdot)$ and $\bff(\om_j,\cdot)$.
\item Solve for $\hat{\bfu}^h(\omega_j,\cdot) \in \bV^h$ such that
\[
\hat{a}^h_j\bigl( \hat{\bfu}^h(\omega_j,\cdot), \bfv_h \bigr) = 
\bigl(\bff(\omega_j,\cdot), \bfv_h\bigr)_D \qquad\forall v_h\in \bV^h.
\]
\item Set $\bftPsi^h(\cdot) \leftarrow \bftPsi^h(\cdot) +\frac{1}{M} \hat{\bfu}
^h(\omega_j,\cdot)$.
\item Endfor
\end{description}
\item Output $\bftPsi^h(\cdot)$.
\end{description}

\smallskip
For convergence of the Monte Carlo method, the number of realizations $M$ must 
be sufficiently large.  Thus, one must solve a large number of deterministic 
elastic Helmholtz problems when implementing the classical MCIP-DG method. In 
the case that the frequency $k$ is taken to be large, solving a deterministic 
elastic Helmholtz problem equates to solving a large, ill-conditioned, and 
indefinite linear system.  It is well known that standard iterative methods do 
not perform well for Helmholtz-type problems \cite{Ernst_Gander_12}.  For this 
reason, Gaussian elimination is considered to solve each linear system in the 
internal for-loop of Algorithm 1.  Such a large number of Gaussian elimination 
solves will make Algorithm 1 impractical.

To eliminate the need for performing many Gaussian elimination steps, we leverage 
the multi-modes expansion of the solution \eqref{eq:MultiModes} and propose
the following multi-modes algorithm for \eqref{Eq:ElasticPDE}--\eqref{Eq:ElasticBC}:

\smallskip
\noindent
{\bf Algorithm 2 (Multi-Modes MCIP-DG)}
\begin{description}
\item Input $\bff, \eta, \veps, k, h, M,N$
\item Set $\bfPsi^h_N(\cdot)=0$ (initializing).
\item Generate the stiffness matrix $A$ from the sesquilinear form 
$a_h(\cdot,\cdot)$ on $\bV^h \times \bV^h$.
\item Compute and store the $LU$ decomposition of $A$.
\begin{description}
\item For $j=1,2,\cdots, M$
\item Obtain realizations $\eta(\om_j,\cdot)$ and $\bff(\om_j,\cdot)$.
\item Set $\bfS^h_0(\omega_j,\cdot)=\bff(\omega_j,\cdot)$.
\item Set $\bfu^h_{-1}(\omega_j,\cdot)=0$.
\item Set $\bfU^h_N(\omega_j,\cdot)=0$ (initializing).
\begin{description}
\item For $n=0,1,\cdots, N-1$
\item Solve for $\bfu^h_n(\omega_j,\cdot) \in \bV^h$ such that
\[
a_h\bigl( \bfu^h_n(\omega_j,\cdot), \bfv_h \bigr) = \bigl(\bfS^h_n(\omega_j,
\cdot), \bfv_h\bigr)_D \qquad\forall \bfv_h\in \bV^h,
\]
using the $LU$ decomposition of $A$.
\item Set $\bfU^h_N(\omega_j,\cdot)\leftarrow \bfU^h_N(\omega_j,\cdot) +\veps^n 
\bfu^h_n(\omega_j,\cdot)$.
\item Set $\bfS^h_{n+1}(\omega_j,\cdot)=2k^2 \eta(\omega_j,\cdot) 
\bfu^h_n(\omega_j,\cdot)
+ k^2 \eta(\omega_j,\cdot)^2 \bfu^h_{n-1}(\omega_j,\cdot)$.
\item Endfor
\end{description}
\item Set $\bfPsi^h_N(\cdot) \leftarrow \bfPsi^h_N(\cdot) +\frac{1}{M} 
\bfU^h_N(\omega_j,\cdot)$.
\item Endfor
\end{description}
\item Output $\bfPsi^h_N(\cdot)$.
\end{description}

\smallskip
We note that $\bfPhi^h_n$, defined in \eqref{Eq:Statistical_Average} does not 
show up explicitly in Algorithm 2.  Instead, the multi-modes MCIP-DG approximation 
$\bfPsi^h_N$ is related to $\{\bfPhi^h_n\}$ by
\begin{align}
	\bfPsi^h_N = \sum_{n = 0}^{N-1} \veps^n \bfPhi_n^h. \label{Eq:5_1}
\end{align}
This relationship will be used to obtain the convergence analysis presented in the 
next subsection.

To compare the efficiency of Algorithm 2 versus Algorithm 1, 
let $L = \frac{1}{h}$ with $h$ be the mesh size.  As was stated in 
\cite{Feng_Lin_Lorton_15}, Algorithm 1 requires $O \big( ML^{3d} \big)$ 
multiplications and Algorithm 2 requires $O \big( L^{3d} + MNL^{2d} \big)$ 
multiplications.  In practice, the number of modes $N$ is 
relatively small (see Theorem \ref{thm:Total_Error}) so we can treat it as a 
constant.  To achieve equal order in the $L^2$-error associated to the IP-DG 
method as well as the error associated to the Monte Carlo method, one can choose 
$M = L^4$.  In this case the number of multiplications used in Algorithm 1 is 
given by $O \big( L^{3d + 4} \big)$, where as the number of multiplications used 
in Algorithm 2 is $O \big( L^{3d} + L^{2d + 4} \big)$.  Thus Algorithm 2 is much
more efficient than Algorithm 1.

It is well known that the Monte Carlo algorithm is naturally parallelizable.  
The outer for-loop in both Algorithm 1 and 2 can be run in parallel.

\subsection{Convergence analysis} \label{subsel:convergence}
This subsection provides estimates for the total error, $\E (\bfu^\veps) - 
\bfPsi^h_N$, associated to Algorithm 2.  To this end, we introduce the following 
error decomposition:
\begin{align} \label{Eq:Error_Decomposition}
	&\E(\bfu^\veps)-\bfPsi^h_N \\
	& \qquad =\bigl(\E(\bfu^\veps)-\E(\bfU^\veps_N)\bigr) + 
	\bigl( \E(\bfU^\veps_N)- \E(\bfU^h_N)\bigr) +\bigl( \E(\bfU^h_N)-\bfPsi^h_N 
	\bigr), \notag
\end{align}
where $\bfU^\veps_N$ is defined in $\eqref{eq3.12}$ and $\bfU^h_N$ is defined as
\begin{align}
	\bfU^h_N = \sum_{n = 0}^{N-1} \bfu_n^h \label{Eq:Truncated_MM_IPDG}.
\end{align}
The first term on the right-hand side of \eqref{Eq:Error_Decomposition} 
corresponds the error associated to truncating the multi-modes expansion on 
$\bfu^\veps$, the second term corresponds to the error associated to using 
the IP-DG discretization method, and the third term corresponds to the error 
associated to the Monte Carlo method.  The error corresponding to truncation 
of the multi-modes expansion was estimated in Theorem \ref{thm3.3}.

The definition of $\bfU_N^\veps$ and $\bfU^h_N$ and Theorem 
\ref{thm:IPDG_Mode_Error} immediately implies the following theorem 
characterizing the error associated to the IP-DG method.
\begin{theorem} \label{thm:MM_IPDG_Error_1}
Assume that $\bfu_n \in \bL^2(\Omega, \bH^2(D))$ for $n \geq 0$. If $k^{\ta + 1} 
h = O(1)$, then the following error estimates hold
\begin{align} \label{Eq:MM_IPDG_Error_1} 
	&\E \big( \| \bfU_N^\veps - \bfU^h_N \|_{L^2(D)} \big) 
	\leq \tilde{C}_0 h^2 \sum_{n = 0}^{N-1} \sum_{j = 0}^n \veps^n 
	\big[ \hat{C}_0 (2k^{\ta} + 3) \big]^{n-j} \E \big( \| \bfu_j \|_{H^2(D)} \big), \\
	&\E \big( \bfU_N^\veps - \bfU^h_N  \|_{1,h} \big) \label{Eq:MM_IPDG_Error_2} 
\leq C \tilde{C}_0 h \sum_{n=0}^{N-1} \sum_{j = 0}^n \veps^n \big[ 
	\hat{C}_0 (2k^{\ta} + 3) \big]^{n-j} \E \big( \| \bfu_j \|_{H^2(D)} \big). 
\end{align}
\end{theorem}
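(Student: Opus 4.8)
The plan is to obtain this estimate as an immediate corollary of the mode-by-mode error bound established in Theorem \ref{thm:IPDG_Mode_Error}, exploiting the fact that both $\bfU_N^\veps$ and $\bfU_N^h$ are the \emph{same} linear combination, with weights $\veps^n$, of the exact mode functions $\bfu_n$ and of their IP-DG approximations $\bfu_n^h$, respectively. Since the multi-modes sum is finite (it has $N$ terms) and linear in its summands, the total discretization error splits cleanly across the modes, and no new analytical machinery is required beyond what Theorem \ref{thm:IPDG_Mode_Error} already provides.

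First I would use the definitions \eqref{eq3.12} and \eqref{Eq:Truncated_MM_IPDG} to write the difference as a single weighted sum,
\begin{align*}
	\bfU_N^\veps - \bfU_N^h = \sum_{n=0}^{N-1} \veps^n \big( \bfu_n - \bfu_n^h \big),
\end{align*}
so that the per-mode errors become directly accessible. Applying the triangle inequality together with the linearity of the expectation then gives
\begin{align*}
	\E \big( \| \bfU_N^\veps - \bfU_N^h \|_{L^2(D)} \big) \leq \sum_{n=0}^{N-1} \veps^n \, \E \big( \| \bfu_n - \bfu_n^h \|_{L^2(D)} \big).
\end{align*}
Next I would substitute the $L^2$ bound \eqref{Eq:IPDG_Mode_Error_1} for each summand $\E \big( \| \bfu_n - \bfu_n^h \|_{L^2(D)} \big)$ and collect the resulting nested sum, which produces exactly \eqref{Eq:MM_IPDG_Error_1}. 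The $1,h$-norm estimate \eqref{Eq:MM_IPDG_Error_2} follows by the identical argument, now invoking \eqref{Eq:IPDG_Mode_Error_2} in place of \eqref{Eq:IPDG_Mode_Error_1}.

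There is essentially no obstacle here: the genuine difficulty — the recursive control of the error propagated through the three-term source terms $\bfS_n$, handled via the auxiliary functions $\bftu_n^h$ and the discrete Gr\"onwall-type estimate of Lemma \ref{lem:Simp} — has already been absorbed into Theorem \ref{thm:IPDG_Mode_Error}. The only bookkeeping required at this stage is to carry the weights $\veps^n$ through the triangle inequality unchanged, since the statement retains them explicitly rather than bounding them (for instance by using $\veps < 1$) and folding them into the generic constant. Consequently the proof amounts to the two displayed lines above followed by a direct substitution.
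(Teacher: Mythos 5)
Your proposal is correct and is essentially the paper's own argument: the paper states this theorem as an immediate consequence of the definitions of $\bfU_N^\veps$, $\bfU_N^h$ and Theorem \ref{thm:IPDG_Mode_Error}, which is precisely the triangle-inequality-plus-substitution computation you spell out. Note only that you implicitly (and correctly) read $\bfU_N^h$ as $\sum_{n=0}^{N-1}\veps^n\bfu_n^h$, consistent with Algorithm 2 and \eqref{Eq:5_1}, even though the display \eqref{Eq:Truncated_MM_IPDG} omits the weights $\veps^n$ by an apparent typo.
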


As an immediate consequence of this theorem, we choose suitable restrictions on 
the size of the perturbation parameter $\veps$ and obtain the following theorem.
\begin{theorem} \label{thm:MM_IPDG_Error_2}
Assume that $\bfu_n \in \bL^2(\Omega, \bH^2(D))$ for $n \geq 0$. If $k^{\ta + 1} 
h = O(1)$ and $\veps$ is chosen to satisfy $4 \hat{C}_0 \sqrt{C_0} \big(2k^{\ta} 
+3\big) \veps < 1$, then there hold
\begin{align}
	\E \big( \| \bfU_N^\veps - \bfU^h_N \|_{L^2(D)} \big) & \leq C(C_0,\hat{C}
	_0, \tilde{C}_0,k,\veps) \, h^2, \label{Eq:MM_IPDG_Error_3} \\
	\E \big( \bfU_N^\veps - \bfU^h_N  \|_{1,h} \big) &\leq C(C_0,\hat{C}_0, 
	\tilde{C}_0,k,\veps) \, h, \label{Eq:MM_IPDG_Error_4}
\end{align}
where
\begin{align*}
	C(C_0,\hat{C}_0, \tilde{C}_0,k,\veps):= C \, \tilde{C}_0 \frac{\sqrt{C_0}
	\big(k^{\ta + 2} + 1\big)}{2k^2 \big(4\sqrt{C_0} - 1\big)} \cdot \frac{1}{1 
	- 4 \hat{C}_0 \sqrt{C_0} \big(2k^{\ta} + 3\big)\veps}.
\end{align*}
\end{theorem}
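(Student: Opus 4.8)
The plan is to treat this as a direct corollary of Theorem~\ref{thm:MM_IPDG_Error_1}: the only work is to bound the double sum appearing on its right-hand side by feeding in the a~priori mode estimate \eqref{eq3.6a} and then summing two geometric series. Write $\beta := \hat{C}_0(2k^{\ta}+3)$, so that the quantity to control is
\[
T := \sum_{n=0}^{N-1}\sum_{j=0}^{n}\veps^n\,\beta^{\,n-j}\,\E\big(\|\bfu_j\|_{H^2(D)}\big).
\]
First I would pass from $\E(\|\bfu_j\|_{H^2(D)})$ to the second moment via Cauchy--Schwarz, $\E(\|\bfu_j\|_{H^2(D)})\le\big(\E(\|\bfu_j\|_{H^2(D)}^2)\big)^{1/2}$, and then invoke \eqref{eq3.6a} together with the explicit form of $C(j,k)$ in \eqref{eq3.7a}. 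This gives $\E(\|\bfu_j\|_{H^2(D)})\le (k^{\ta}+k^{-2})\,C(j,k)^{1/2}\,\big(\E(\|\bff\|_{L^2(D)}^2)\big)^{1/2}$, so the prefactor $(k^{\ta}+k^{-2})$ (which I would rewrite as $(k^{\ta+2}+1)/k^2$) and the data factor $\big(\E(\|\bff\|_{L^2(D)}^2)\big)^{1/2}$ pull out of $T$ and are absorbed into the generic constant $C$.

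The heart of the argument is the observation that $\veps^j C(j,k)^{1/2}$ collapses to a single geometric factor in $c_\veps$. Using \eqref{eq3.7a} one checks $\veps^j C(j,k)^{1/2}=\tfrac12\sqrt{C_0}\,c_\veps^{\,j}$ for $j\ge 1$ and $\sqrt{C_0}$ for $j=0$, so in either case $\veps^j C(j,k)^{1/2}\le\sqrt{C_0}\,c_\veps^{\,j}$, with $c_\veps=4\veps C_0^{1/2}(1+k^{\ta})$ exactly as in Theorem~\ref{thm3.2}. I would then factor $\veps^n\beta^{\,n-j}=(\veps\beta)^{\,n-j}\veps^j$, interchange the order of summation so that $j$ is the outer index, and bound
\[
T\ \lesssim\ (k^{\ta}+k^{-2})\,\big(\E(\|\bff\|_{L^2(D)}^2)\big)^{1/2}\sqrt{C_0}\sum_{j=0}^{N-1}c_\veps^{\,j}\sum_{m=0}^{N-1-j}(\veps\beta)^{\,m},
\]
where $m=n-j$. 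The inner sum is bounded by $1/(1-\veps\beta)$ and the outer one by $1/(1-c_\veps)$, provided both ratios are strictly less than $1$.

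Verifying these two smallness conditions from the single hypothesis $\rho:=4\hat{C}_0\sqrt{C_0}(2k^{\ta}+3)\veps<1$ is the only delicate point, and it is where the simplifying assumption $\hat{C}_0\ge 1$ is used. Since $\hat{C}_0\ge1$ and $2k^{\ta}+3\ge 1+k^{\ta}$, one has $c_\veps\le\rho<1$, which controls the outer series by $1/(1-\rho)$. For the inner series, $\rho<1$ gives $\veps\beta<1/(4\sqrt{C_0})$, hence $1-\veps\beta>(4\sqrt{C_0}-1)/(4\sqrt{C_0})$ and $1/(1-\veps\beta)<4\sqrt{C_0}/(4\sqrt{C_0}-1)$; this produces precisely the factor $1/(4\sqrt{C_0}-1)$ in the stated constant (here $C_0>1/16$ is implicit, which is harmless for a stability constant). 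Collecting the prefactor, the two geometric bounds, and the $\tilde{C}_0 h^2$ from Theorem~\ref{thm:MM_IPDG_Error_1} yields \eqref{Eq:MM_IPDG_Error_3}, with all remaining numerical and $C_0$-type factors swept into $C$. The bound \eqref{Eq:MM_IPDG_Error_4} in the $\|\cdot\|_{1,h}$ norm is identical line for line, the only changes being the extra generic constant and the replacement of $h^2$ by $h$ coming from the second estimate of Theorem~\ref{thm:MM_IPDG_Error_1}; the mode bound \eqref{eq3.6a} feeding the sum is unchanged. The main obstacle, then, is purely the bookkeeping of the two interleaved geometric series and checking that a single condition on $\veps$ simultaneously tames the mode-growth ratio $c_\veps$ and the error-propagation ratio $\veps\beta$.
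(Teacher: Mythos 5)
Your proposal is correct and follows essentially the same route as the paper: both treat the theorem as a direct corollary of Theorem \ref{thm:MM_IPDG_Error_1}, insert the mode bound \eqref{eq3.6a} with the explicit form \eqref{eq3.7a} of $C(j,k)$, and reduce everything to geometric series controlled by the single hypothesis $4\hat{C}_0\sqrt{C_0}\big(2k^{\ta}+3\big)\veps<1$. The only cosmetic difference is the bookkeeping of the double sum --- you swap the summation order and decouple it into $\sum_j c_\veps^{\,j}\sum_m(\veps\beta)^m$ with both ratios below one, whereas the paper keeps the original order, absorbs $(1+k^{\ta})^j$ via $(1+k^{\ta})^j\beta^{\,n-j}\le\beta^{\,n}$, and sums the (growing) inner series $\sum_j(4\sqrt{C_0})^j$ exactly; both manipulations produce the stated constant up to the same harmless $O(\sqrt{C_0})$ slack that the paper itself sweeps into the generic factor $C$.
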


\begin{proof}
To obtain \eqref{Eq:MM_IPDG_Error_3} and \eqref{Eq:MM_IPDG_Error_4}, we need to 
find an upper bound for the double sum in \eqref{Eq:MM_IPDG_Error_1} and 
\eqref{Eq:MM_IPDG_Error_2}.  To this double sum we apply \eqref{eq3.6a} and make 
the simplifying assumption that $C_0 \geq 1$ to obtain the following
\begin{align*}
	&\sum_{n = 0}^{N-1} \sum_{j = 0}^n \veps^n \big[ \hat{C}_0 (2k^{\ta} + 3) 
	\big]^{n-j} \E \big( \| \bfu_j \|_{H^2(D)} \big) \\
	& \qquad \leq \left(k^{\ta} + \frac{1}{k^2} \right) \E \big(\| \bff \|
	_{L^2(D)} \big) \sum_{n = 0}^{N-1} \sum_{j = 0}^n \veps^n \big( \hat{C}_0 
	(2k^{\ta} + 3) \big)^{n-j} C(j,k)^{\frac{1}{2}} \\
	& \qquad \leq \frac{\sqrt{C_0}\big(k^{\ta + 2} + 1\big)}{2k^2} \E \big(\| 
	\bff \|_{L^2(D)} \big) \sum_{n = 0}^{N-1}  \veps^n \big( \hat{C}_0 (2k^{\ta} 
	+ 3) \big)^{n}  \sum_{j = 0}^n 4^j C_0^{\frac{j}{2}} 
\end{align*}
\begin{align*}
	& \qquad \leq \frac{\sqrt{C_0}\big(k^{\ta + 2} + 1\big)}{2k^2 \big(4 
	\sqrt{C_0} -1 \big)} \E \big(\| \bff \|_{L^2(D)} \big) \sum_{n = 0}^{N-1} 
	\big(4 \hat{C}_0 \sqrt{C_0} (2k^{\ta} + 3) \veps)^{n} \\
	& \qquad \leq \frac{\sqrt{C_0}\big(k^{\ta + 2} + 1\big)}{2k^2 \big(4 
	\sqrt{C_0} -1 \big)} \cdot \frac{1 - \big(4 \hat{C}_0 \sqrt{C_0} (2k^{\ta} + 
	3) \veps)^{N}}{1 - 4 \hat{C}_0 \sqrt{C_0} (2k^{\ta} + 3) \veps} \E \big(\| 
	\bff \|_{L^2(D)} \big).
\end{align*}
Appealing to the fact that $4 \hat{C}_0 \sqrt{C_0} \big(2k^{\ta} +3\big) \veps < 1$, 
yields \eqref{Eq:MM_IPDG_Error_3} and \eqref{Eq:MM_IPDG_Error_4}.
\end{proof}

The next theorem establishes the error associated to the Monte Carlo discretization.
\begin{theorem} \label{thm:MM_Monte_Carlo_Error}
	Let $k^{\ta + 1} h = O(1)$ and let $\veps$ satisfy $\hat{c}_{\veps} := 4 
	\hat{C}_0 \big(k^{\ta} + 1 \big) \veps < 1$, then the following error estimates hold
	\begin{align}
		\E \Big( \| \E \big( \bfU_N^h \big) - \bfPsi_N^h \|_{L^2(D)} \Big) & 
		\leq \frac{\hat{C}_0}{\sqrt{M}} \left(k^{\ta - 2} + \frac{1}{k^2} 
		\right) \cdot \frac{1}{1 - \hat{c}_{\veps}} \E \big( \| \bff \|_{L^2(D)} 
		\big), \label{Eq:MM_Monte_Carlo_Error_1} \\
		\E \Big( \| \E \big( \bfU_N^h \big) - \bfPsi_N^h \|_{1,h} \Big) & \leq 
		\frac{\hat{C}_0}{\sqrt{M}} \left(k^{\ta - 1} + \frac{1}{k} \right) \cdot 
		\frac{1}{1 - \hat{c}_{\veps}} \E \big( \| \bff \|_{L^2(D)} \big). 
		\label{Eq:MM_Monte_Carlo_Error_2}
	\end{align}
\end{theorem}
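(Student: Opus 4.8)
The plan is to reduce the estimate on $\E(\bfU_N^h)-\bfPsi_N^h$ to the per-mode Monte Carlo bounds of Theorem~\ref{thm:Mode_Average_Error} and then sum a geometric series whose ratio turns out to be exactly $\hat{c}_\veps$. First I would exploit linearity. Since $\bfU_N^h(\om_j,\cdot)=\sum_{n=0}^{N-1}\veps^n\bfu_n^h(\om_j,\cdot)$ (the accumulation step of Algorithm~2) and $\bfPsi_N^h=\sum_{n=0}^{N-1}\veps^n\bfPhi_n^h$ by \eqref{Eq:5_1}, taking expectations gives
\[
\E(\bfU_N^h)-\bfPsi_N^h=\sum_{n=0}^{N-1}\veps^n\bigl(\E(\bfu_n^h)-\bfPhi_n^h\bigr).
\]
Applying the triangle inequality in $L^2(D)$ (resp.\ in $\|\cdot\|_{1,h}$) to the random field and then taking the expectation of the resulting scalar inequality yields
\[
\E\bigl(\|\E(\bfU_N^h)-\bfPsi_N^h\|_{L^2(D)}\bigr)\le\sum_{n=0}^{N-1}\veps^n\,\E\bigl(\|\E(\bfu_n^h)-\bfPhi_n^h\|_{L^2(D)}\bigr).
\]

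Next I would pass from the mean of the norm to its root mean square via Jensen's (Cauchy--Schwarz) inequality, $\E(X)\le\bigl(\E(X^2)\bigr)^{1/2}$, so that each summand is controlled by the square root of the bound in \eqref{eq:Mode_Average_Error1}:
\[
\E\bigl(\|\E(\bfu_n^h)-\bfPhi_n^h\|_{L^2(D)}\bigr)\le\frac{1}{\sqrt{M}}\Bigl(k^{\ta-2}+\frac{1}{k^2}\Bigr)\hat{C}(n,k)^{1/2}\bigl(\E(\|\bff\|^2_{L^2(D)})\bigr)^{1/2},
\]
and analogously for the $\|\cdot\|_{1,h}$ norm through \eqref{eq:Mode_Average_Error2}, where I would additionally use $\frac{1}{k^2}\le\frac1k$ (since $k\ge1$) to match the right-hand side displayed in \eqref{Eq:MM_Monte_Carlo_Error_2}. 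Here $\bigl(\E(\|\bff\|^2_{L^2(D)})\bigr)^{1/2}$ is the $\bL^2(\Om,\bL^2(D))$-norm of $\bff$ that appears in the statement.

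The crux is the summation $\sum_{n=0}^{N-1}\veps^n\hat{C}(n,k)^{1/2}$. Inserting the explicit form \eqref{eq:DMSE3}, for $n\ge1$ one has $\hat{C}(n,k)^{1/2}=\frac12\,4^n\hat{C}_0^{\,n+1}(1+k^{\ta})^n$, whence
\[
\veps^n\hat{C}(n,k)^{1/2}=\frac{\hat{C}_0}{2}\bigl(4\hat{C}_0(k^{\ta}+1)\veps\bigr)^n=\frac{\hat{C}_0}{2}\,\hat{c}_\veps^{\,n},
\]
while the $n=0$ term equals $\hat{C}_0$. Therefore
\[
\sum_{n=0}^{N-1}\veps^n\hat{C}(n,k)^{1/2}\le\hat{C}_0+\frac{\hat{C}_0}{2}\sum_{n=1}^{\infty}\hat{c}_\veps^{\,n}\le\hat{C}_0\sum_{n=0}^{\infty}\hat{c}_\veps^{\,n}=\frac{\hat{C}_0}{1-\hat{c}_\veps},
\]
the convergence condition $\hat{c}_\veps<1$ being exactly what permits summing the geometric series. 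Combining this with the previous displays gives \eqref{Eq:MM_Monte_Carlo_Error_1}, and the identical argument based on \eqref{eq:Mode_Average_Error2} gives \eqref{Eq:MM_Monte_Carlo_Error_2}.

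I expect no genuine analytic obstacle here: the estimate is essentially a mode-by-mode telescoping of Theorem~\ref{thm:Mode_Average_Error}. The only points requiring care are the bookkeeping that makes the geometric ratio come out precisely as $\hat{c}_\veps$---in particular verifying that the unweighted $n=0$ term together with the $\frac12$-weighted tail is still dominated by the full series $\hat{C}_0\sum_{n\ge0}\hat{c}_\veps^{\,n}$---and the harmless slack $\frac{1}{k^2}\le\frac1k$ used to reconcile the exponent of $k$ with the constant written in \eqref{Eq:MM_Monte_Carlo_Error_2}.
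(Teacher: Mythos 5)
Your proposal is correct and follows essentially the same route as the paper's proof: the same expansion $\E(\bfU_N^h)-\bfPsi_N^h=\sum_{n=0}^{N-1}\veps^n(\E(\bfu_n^h)-\bfPhi_n^h)$, the same per-mode application of Theorem \ref{thm:Mode_Average_Error}, and the same geometric series with ratio $\hat{c}_\veps$. In fact your write-up is slightly more careful than the paper's, which leaves implicit both the Jensen step $\E(X)\le(\E(X^2))^{1/2}$ and the adjustment $\frac{1}{k^2}\le\frac{1}{k}$ needed for \eqref{Eq:MM_Monte_Carlo_Error_2}.
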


\begin{proof}
	By the definitions of $\bfU_N^h$ and $\bfPsi_N^h$ we have
	\begin{align*}
		\bfU_N^h - \bfPsi_N^h = \sum_{n = 0}^{N-1} \veps^n \big( \bfu_n^h-\bfPhi_n^h \big) 
	\end{align*}
	Thus, it follows from \eqref{eq:Mode_Average_Error1} that
	\begin{align*}
		&\E \Big( \| \E \big( \bfU_N^h \big) - \bfPsi_N^h \|_{L^2(D)} \Big) \leq 
		\sum_{n = 0}^{N-1} \veps^n \E \Big( \| \E \big( \bfu_n^h \big) - 
		\bfPhi_n^h \|_{L^2(D)} \Big) \\
		& \qquad \leq \frac{1}{\sqrt{M}} \left(k^{\ta - 2} + \frac{1}{k^2} 
		\right) \E \big( \| \bff \|_{L^2(D)} \big) \sum_{n = 0}^{N-1} \veps^n 
		\hat{C}(n,k)^{\frac{1}{2}} \\
		& \qquad \leq \frac{\hat{C}_0}{\sqrt{M}} \left(k^{\ta - 2} + \frac{1}
		{k^2} \right) \E \big( \| \bff \|_{L^2(D)} \big) \sum_{n = 0}^{N-1} 
		\big( 4 \hat{C}_0 (1 + k^{\ta}) \veps \big)^n \\
		& \qquad \leq \frac{\hat{C}_0}{\sqrt{M}} \left(k^{\ta - 2} + \frac{1}
		{k^2} \right) \cdot \frac{1}{1 - \hat{c}_{\veps}} \E \big( \| \bff \|
		_{L^2(D)} \big).
	\end{align*}
	Hence, \eqref{Eq:MM_Monte_Carlo_Error_1} holds.  By using a similar argument 
to the one for deriving \eqref{eq:Mode_Average_Error2}, we obtain 
\eqref{Eq:MM_Monte_Carlo_Error_2}. The proof is complete.
\end{proof}

Theorems \ref{thm3.3}, \ref{thm:MM_IPDG_Error_2}, \ref{thm:MM_Monte_Carlo_Error} 
contain estimates for each piece of the total error decomposition given in 
\eqref{Eq:Error_Decomposition}.  The next theorem puts these together to give an 
estimate for the total error associated to the multimodes MCIP-DG method given 
in Algorithm 2.
\begin{theorem} \label{thm:Total_Error}
Under the assumptions that $\bfu_n \in \bL^2(\Omega, \bH^2(D))$ for $n \geq 0$, 
$k^{\ta + 1}h = O(1)$, and $4 \hat{C}_0 \sqrt{C_0} \big(2k^{\ta} +3\big) \veps < 1$, 
there hold
\begin{align}
	\E \Big( \| \E \big(\bfu^\veps \big) - \bfPsi_N^h \|_{L^2(D)} \Big) & \leq 
	C_1 \veps^N + C_2 h^2 + C_3 M^{-\frac{1}{2}}, \label{Eq:Total_Error_1} \\
	\E \Big( \| \E \big(\bfu^\veps \big) - \bfPsi_N^h \|_{1,h} \Big) & \leq C_1 
	\veps^N + C_2 h + C_3 M^{-\frac{1}{2}}, \label{Eq:Total_Error_2}
\end{align}
where $C_j = C_j(C_0, \hat{C}_0, \tilde{C}_0, k, \veps)$ are positive constants.
\end{theorem}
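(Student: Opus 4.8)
The plan is to combine the three-way error splitting \eqref{Eq:Error_Decomposition} with the component estimates already proved in Theorems \ref{thm3.3}, \ref{thm:MM_IPDG_Error_2}, and \ref{thm:MM_Monte_Carlo_Error}. First I would take norms in \eqref{Eq:Error_Decomposition}, apply the triangle inequality, and then take the expectation, which gives
\[
\E\big(\|\E(\bfu^\veps)-\bfPsi^h_N\|\big) \le \big\|\E(\bfu^\veps-\bfU^\veps_N)\big\| + \big\|\E(\bfU^\veps_N-\bfU^h_N)\big\| + \E\big(\|\E(\bfU^h_N)-\bfPsi^h_N\|\big),
\]
valid for both the $L^2(D)$ norm and the $\|\cdot\|_{1,h}$ norm. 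Here the first two differences of expectations are deterministic, so the outer expectation acts trivially on them, while only the third term carries the randomness introduced through the Monte Carlo average $\bfPsi^h_N$.

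Next I would bound the three terms in turn. For the truncation term $\|\E(\bfu^\veps-\bfU^\veps_N)\|$ I would apply Jensen's inequality twice: first $\|\E(\cdot)\|\le \E(\|\cdot\|)$ to move the norm inside the expectation, and then $\E(\|\cdot\|)\le\big(\E(\|\cdot\|^2)\big)^{1/2}$ to pass to the mean-square norm in which Theorem \ref{thm3.3} is stated; the resulting bound is proportional to $c_\veps^{N}$, and since $c_\veps=4\veps C_0^{1/2}(1+k^{\ta})$ this term has the form $C_1\veps^N$ with $C_1$ independent of $h$ and $M$. The discretization term $\|\E(\bfU^\veps_N-\bfU^h_N)\|$ needs only one Jensen step, $\|\E(\cdot)\|\le\E(\|\cdot\|)$, after which Theorem \ref{thm:MM_IPDG_Error_2} delivers $C_2 h^2$ in the $L^2(D)$ norm and $C_2 h$ in the $\|\cdot\|_{1,h}$ norm. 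The Monte Carlo term $\E(\|\E(\bfU^h_N)-\bfPsi^h_N\|)$ is already in the expectation-of-norm form required and is bounded directly by Theorem \ref{thm:MM_Monte_Carlo_Error}, yielding $C_3 M^{-1/2}$. Summing the three contributions produces \eqref{Eq:Total_Error_1} and \eqref{Eq:Total_Error_2}.

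Before invoking these results I would check that the single smallness hypothesis $4\hat{C}_0\sqrt{C_0}(2k^{\ta}+3)\veps<1$ implies each theorem's separate requirement. Using the simplifying normalizations $C_0\ge 1$ and $\hat{C}_0\ge 1$ together with $2k^{\ta}+3\ge 1+k^{\ta}$, one sees that both $c_\veps=4\sqrt{C_0}(1+k^{\ta})\veps$ and $\hat{c}_\veps=4\hat{C}_0(1+k^{\ta})\veps$ are dominated by $4\hat{C}_0\sqrt{C_0}(2k^{\ta}+3)\veps$, hence strictly below $1$; the condition $\veps(2\veps+1)<\gamma_0$ needed in Theorem \ref{thm3.3} is a standing assumption inherited from the well-posedness results of Sections \ref{sec:PDE_Analysis}--\ref{sec:Multi-Modes}.

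There is no genuinely new analytic content in this theorem — every ingredient has already been established — so the argument is essentially bookkeeping. The only point requiring care is the correct deployment of Jensen's inequality on the two deterministic terms, and in particular the mean-square-to-mean passage $\E(\|\cdot\|)\le(\E(\|\cdot\|^2))^{1/2}$ needed for the truncation term, because Theorem \ref{thm3.3} is phrased in the mean-square norm while the target estimate is in the mean norm. Keeping the three constants $C_1$, $C_2$, $C_3$ cleanly separated according to their $\veps^N$, $h$, and $M^{-1/2}$ dependence is the remaining mild obstacle.
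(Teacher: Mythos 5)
Your proposal is correct and is essentially the proof the paper intends: the paper states Theorem \ref{thm:Total_Error} as a direct consequence of combining the decomposition \eqref{Eq:Error_Decomposition} with Theorems \ref{thm3.3}, \ref{thm:MM_IPDG_Error_2}, and \ref{thm:MM_Monte_Carlo_Error}, which is exactly your triangle-inequality-plus-Jensen argument. Your explicit verification that the single smallness condition $4\hat{C}_0\sqrt{C_0}\big(2k^{\ta}+3\big)\veps<1$ dominates both $c_\veps<1$ and $\hat{c}_\veps<1$ is a detail the paper leaves implicit, and it is handled correctly.
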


\section{Numerical experiments} \label{sec:Numerical_Experiments}

In this section we present several numerical experiments to demonstrate the 
performance and key features of the proposed MCIP-DG method for problem 
\eqref{Eq:ElasticPDE}--\eqref{Eq:ElasticBC}.  In all of our experiments the 
computational domain $D$ is chosen to be the unit square centered at the origin  
and a uniform triangulation of $D$ is chosen as the mesh.  A 
sample triangulation is given in Figure \ref{fig:MeshExample}.

\begin{figure}[htbp]
\centering
\includegraphics[width=3.5in,height=1.8in]{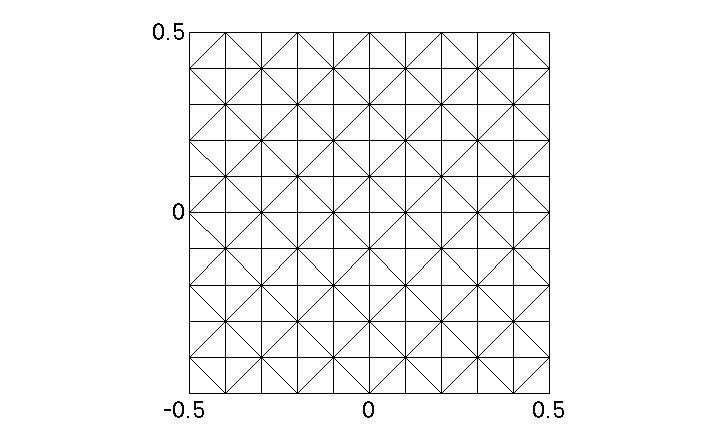} 
\caption{Triangulation $\mathcal{T}_{1/10}$ \label{fig:MeshExample}} 
\end{figure}

To simulate random media held inside $D$, $\alpha(\omega,\cdot)$ is generated 
using a random field $\eta(\omega,\cdot)$ which is a Gaussian random field with 
an exponential covariance function with correlation length $\ell = 0.5$ 
(cf. \cite{Lord_Powell_Shardlow}), i.e. the following covariance function 
is used to generate $\eta(\omega,\cdot)$
\begin{align*}
	C(\mathbf{x_1},\mathbf{x_2}) = \exp \left( -\frac{\| \mathbf{x_1}-
	\mathbf{x_2} \|_2}{0.5} \right).
\end{align*}
For the numerical simulations $\eta(\omega,\cdot)$ is sampled at the center 
of each element in the mesh. Such two sample realizations of $\eta(\omega,\cdot)$ 
are given in Figure \ref{fig:SmoothNoiseSamples}.
\begin{figure}[htbp]
\centerline{
\includegraphics[width=2.2in,height=1.8in]{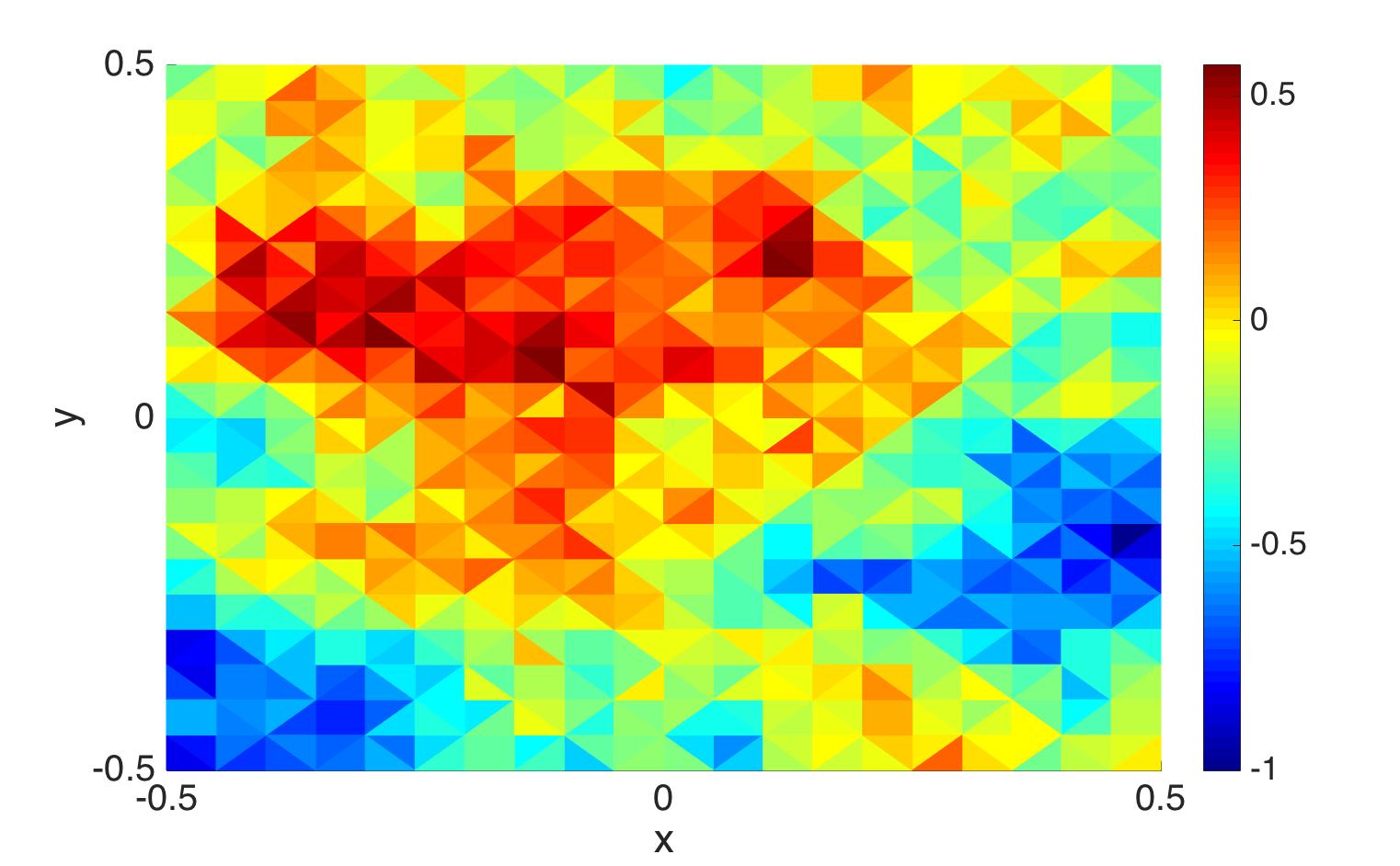} 
\includegraphics[width=2.2in,height=1.8in]{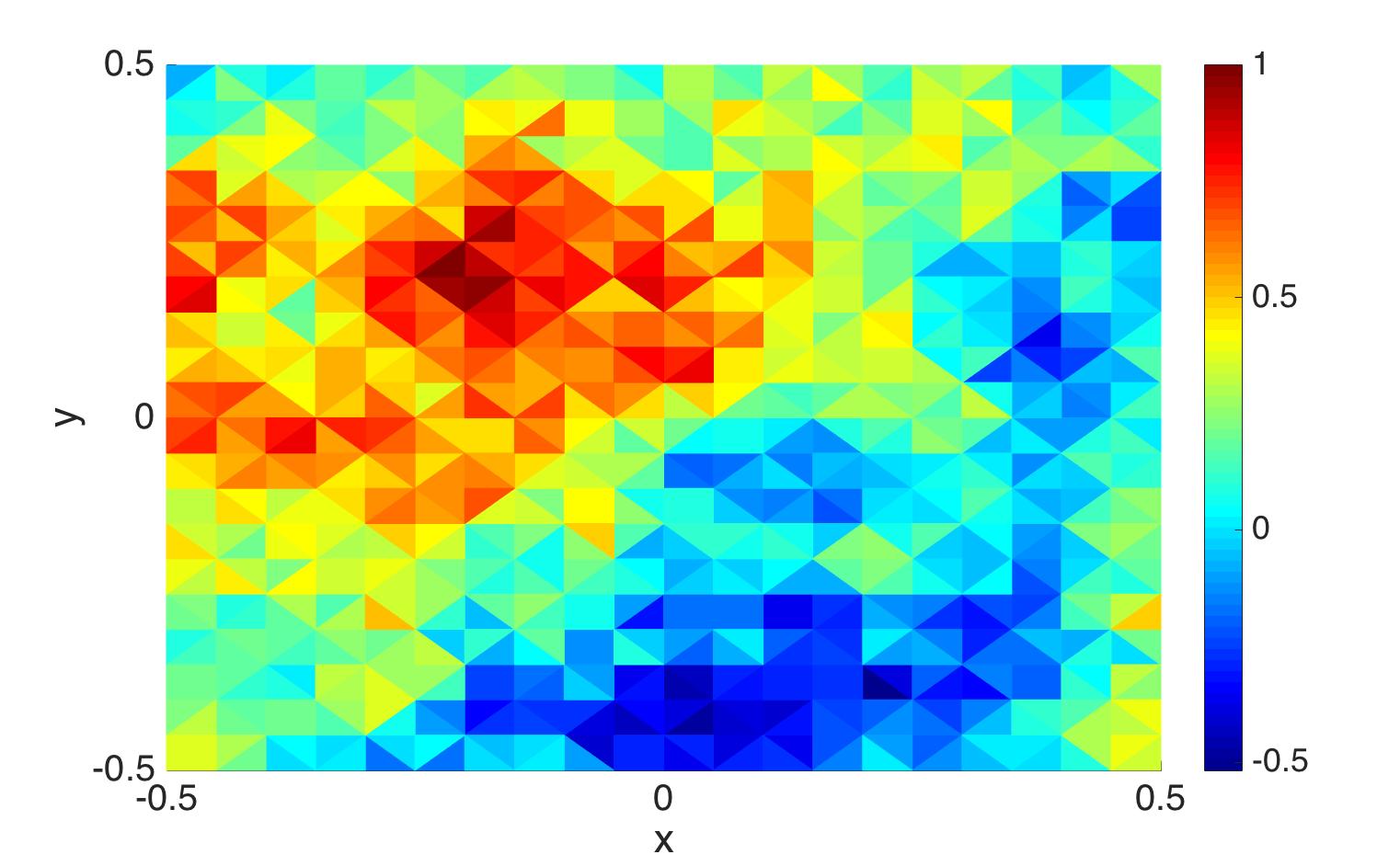}}
\caption{Samples of the random field $\eta(\omega,\cdot)$ generated using an 
exponential covariance function with covariance length $l = 0.5$ on a triangulation 
parameterized by $h = 1/20$. } \label{fig:SmoothNoiseSamples}
\end{figure}

Due to the difficulty of generating a test problem with a known solution, 
we use a contrived right-hand side source function of the form
\begin{align*}
	\bff = \frac{1}{(k \alpha(\om,\cdot))^2 r}[e^{\bfi (k \alpha(\om,\cdot)) r} - 1, 
e^{-\bfi (k \alpha(\om,\cdot))r} - 1]^T,
\end{align*}
where $r = | \bfx |$ is the Euclidean length of $\bfx$.  As a baseline, we 
compare the approximation $\bfPsi^h_N$ generated by Algorithm 2 to the 
classical Monte Carlo approximation $\tilde{\bfPsi}^h$ generated by Algorithm 1.

In our experiments the frequency $k$, the perturbation parameter $\veps$, and 
the number of modes $N$ are allowed to vary.  Other parameters are given the 
following values:
\begin{align*}
	h = 1/20, \qquad \mu = 1, \qquad \lambda = 1, \qquad M = 1000, \qquad A = 
	\left[\begin{array}{c c}
		1 & 0 \\
		0 & 1
	\end{array}\right].
\end{align*}
Based on the numerical experiments carried out in \cite{Feng_Lorton_15} we use 
the following values for our penalty parameters:
\begin{align*}
	\gamma_{0,e} = 10, \qquad \gamma_{1,e} = 0.1,
\end{align*}
for all edges $e$.

In our first experiment we want to judge the performance of Algorithm 2 when 
$\veps$ is taken to be small relative to the frequency $k$.  For this reason 
the frequency $k$ is set to be $k = 5$ and the perturbation parameter $
\veps$ is chosen to be $\veps = 1/10$. We expect from Theorem 
\ref{thm:MM_Monte_Carlo_Error} that the relative error between $\bfPsi^h_N$ and 
$\tilde{\bfPsi}^h$ should decrease on the order $\veps^N$.  

The relative error is plotted versus the $5\veps^N$ for $N = 1,2,\cdots,7$ in 
Figure \ref{fig:ModesVsFullMonte}.  A log scale is used on the vertical access 
to compare the two.  From this plot we note that the relative error is small 
for all values of $N$, indicating that $\bfPsi^h_N$ agrees with $\tilde{\bfPsi}
^h$ and thus Algorithm 2 is producing accurate results.  We also observe that 
the relative error decreases at approximately the same order as $\veps^N$.  
What is unexpected in this plot is the fact that the relative error is almost 
constant between $N = 1$ and $N = 2$, $N = 3$ and $N = 4$, and $N = 5$ and $N = 
6$.  This behavior is not observed in analogous experiments carried out for 
the scalar Helmholtz problem in random media (cf. \cite{Feng_Lin_Lorton_15}).  
This behavior might lead one to believe that only odd mode functions contribute 
to the solution, but we think that this simple explanation may not be
correct since two previous mode functions are used to produce a new mode 
function at every step (cf. \eqref{eq3.4} and \eqref{Eq:Discrete_Mode}).  On 
the other hand, this is an interesting phenomenon and will be investigated more 
fully in the near future.
\begin{figure}[htb]
\centering
\includegraphics[width=2.5in,height=1.8in]{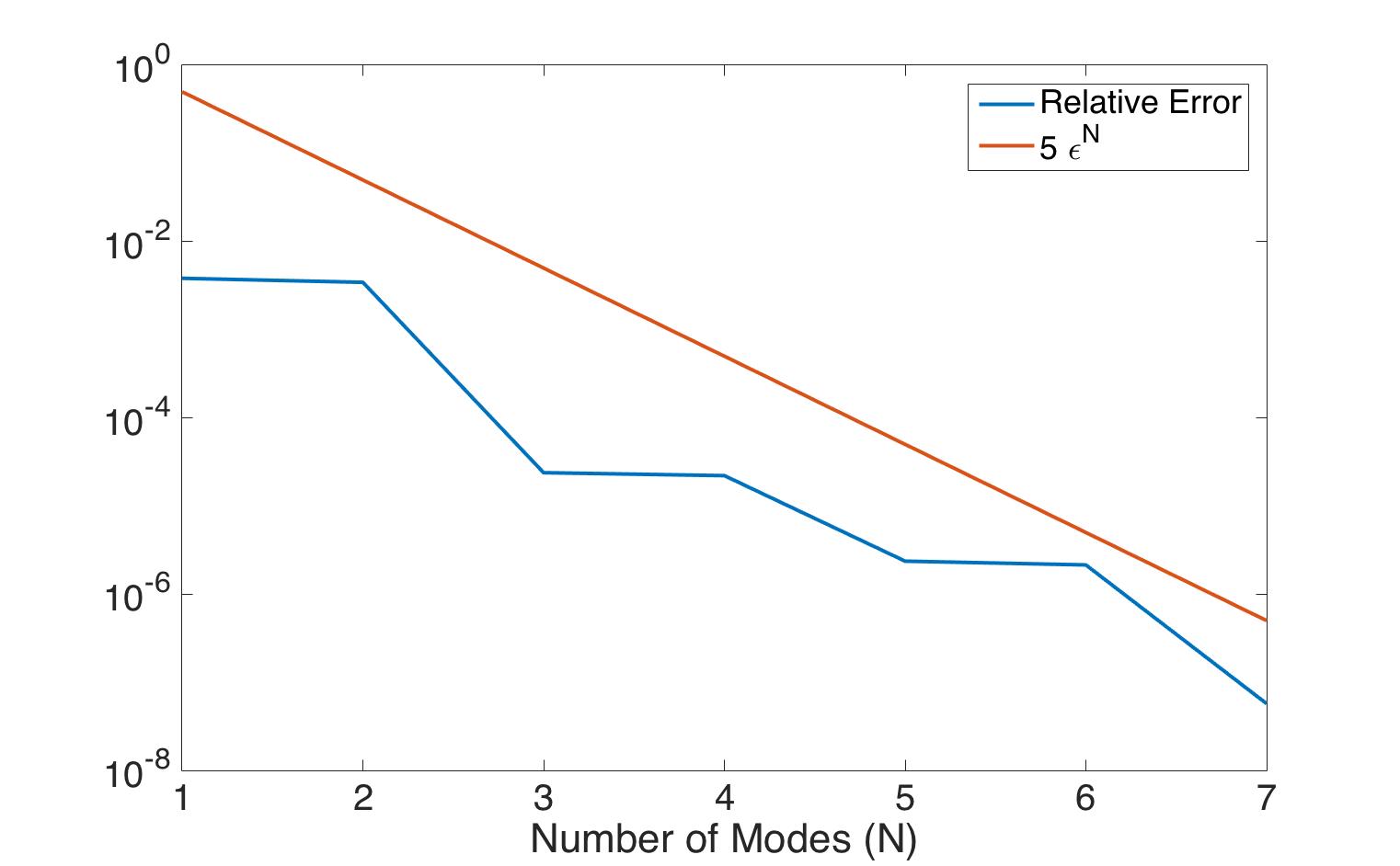}
\caption{$L^2$-norm error between $\Psi^h_N$ computed using MCIP-DG 
with the multi-modes expansion and $\tilde{\Psi}^h$ computed using the 
classical MCIP-DG. The vertical access is given in a log scale.}
\label{fig:ModesVsFullMonte}
\end{figure}

Table \ref{table:ModesVsFullMonte} summarizes the computation time used to 
obtain $\bfPsi^h_N$ using Algorithm 2 with various values of $N$ and the 
computation time used to obtain $\tilde{\bfPsi}^h$ using Algorithm 1.  All 
experiments are performed on the same iMac computer with a 2 GHz Intel Core i7 
processor.  From Table \ref{table:ModesVsFullMonte} we observe that Algorithm 2 
produces accurate approximations with far less computation time than Algorithm 
1.  In fact, Algorithm 2 improves performance by at least one order of 
magnitude.  We also observe that the computation time used for Algorithm 2 
increases linearly as the number of modes $N$ is increased.  This is to be expected.

\begin{table}[htb]
\centering
\begin{tabular}{| c | c |}
	\hline
	Approximation & CPU Time (s) \\
	\hline
	$\tilde{\Psi}^h$ & $21680$ \\
	\hline
	$\Psi^h_1$ & $149$ \\
	\hline
	$\Psi^h_2$ & $335$ \\
	\hline
	$\Psi^h_3$ & $522$ \\
	\hline
	$\Psi^h_4$ & $709$ \\
	\hline
	$\Psi^h_5$ & $897$ \\
	\hline
	$\Psi^h_6$ & $1085$ \\
	\hline
	$\Psi^h_7$ & $1272$ \\
	\hline
\end{tabular}
\caption{CPU times required to compute the MCIP-DG multi-modes approximation $
\Psi^h_N$ 
and classical MCIP-DG approximation $\tilde{\Psi}^h$.} 
\label{table:ModesVsFullMonte}
\end{table}

For our second set of numerical experiments our goal is to check the 
accuracy of Algorithm 2 when the size of the perturbation parameter $\veps$ is 
allowed to grow larger.  In this set of experiments the frequency $k$ is set 
to be $10$ and the perturbation parameter is tested at $\veps = 0.05, 0.1, 0.5, 0.8$.  

Table \ref{table:ModesVSFullMonteError3} shows the relative error associated 
$\bfPsi^h_3$ produced by Algorithm 2 using $N = 3$ modes.  In this table we 
observe that the approximations associated with $\veps = 0.05$ and $0.1$ are
accurate as demonstrated by their small relative errors. For $\veps = 0.5$ and $0.8$ 
Theorem \ref{thm:MM_Monte_Carlo_Error} implies a larger number of modes $N$ might be
needed to produce more accurate approximations.  With this point in mind, Table 
\ref{table:ModesVSFullMonteError4} records the relative error associated  with
these values of $\veps$ along with a larger number of modes $N$.  From this table we 
observe that for $\veps = 0.5$ the relative error does not strictly decrease, 
but when $N = 7$ it does produce a more accurate approximation.  For $\veps = 0.8$ 
using a larger $N$ does not seem to help decrease the relative error.  This is to 
be expected since our convergence theory requires $\veps$ to be small relative to the 
size of $k$ and thus for $\veps$ large, Algorithm 2 will no longer produce accurate solutions.

Lastly, Figures \ref{fig:Fig2a}--\ref{fig:Fig2d} show the solutions $\bfPsi^h_7$ 
produced by Algorithm 2 and sample realizations $\bfU^h_7$ for $k = 10$, $h = 1/20$, 
and $\veps = 0.05$.
\begin{table}[htbp]
\centering
\begin{tabular}{| c || c | c | c | c |}
        \hline
        $\veps$ & $0.05$ & $0.1$ & $0.5$ & $0.8$ \\
        \hline
        Relative $L^2$ Error & $1.4559 \times 10^{-5}$ & $9.7719 \times 10^{-5}$ &
        $0.0399$ & $0.2216$ \\
        \hline
\end{tabular}
\caption{$L^2$-norm relative error between the multimodes expansion 
approximation $\Psi^h_3$ 
and the classical Monte Carlo approximation $\tilde{\Psi}^h$. 
}\label{table:ModesVSFullMonteError3}
\end{table}

\begin{table}[htbp]
\centering
\begin{tabular}{| c || c | c | c | c |}
         \hline
	$\veps$ & $N = 4$ & $N = 5$ & $N = 6$ & $N = 7$ \\
	\hline
	\hline
	$0.5$ & $0.1037$ & $0.0154$ & $0.0450$ & $0.0069$\\
	\hline
	$0.8$ & $0.5218$ & $0.2172$ & $0.5644$ & $0.2155$  \\
	\hline
\end{tabular}
\caption{$L^2$-norm relative error between the multimodes expansion 
approximation $\Psi^h_N$ 
and the classical Monte Carlo approximation $\tilde{\Psi}^h$.
}\label{table:ModesVSFullMonteError4}
\end{table}

\begin{figure}[htb]
\centerline{\includegraphics[width=2.2in,height=1.8in]{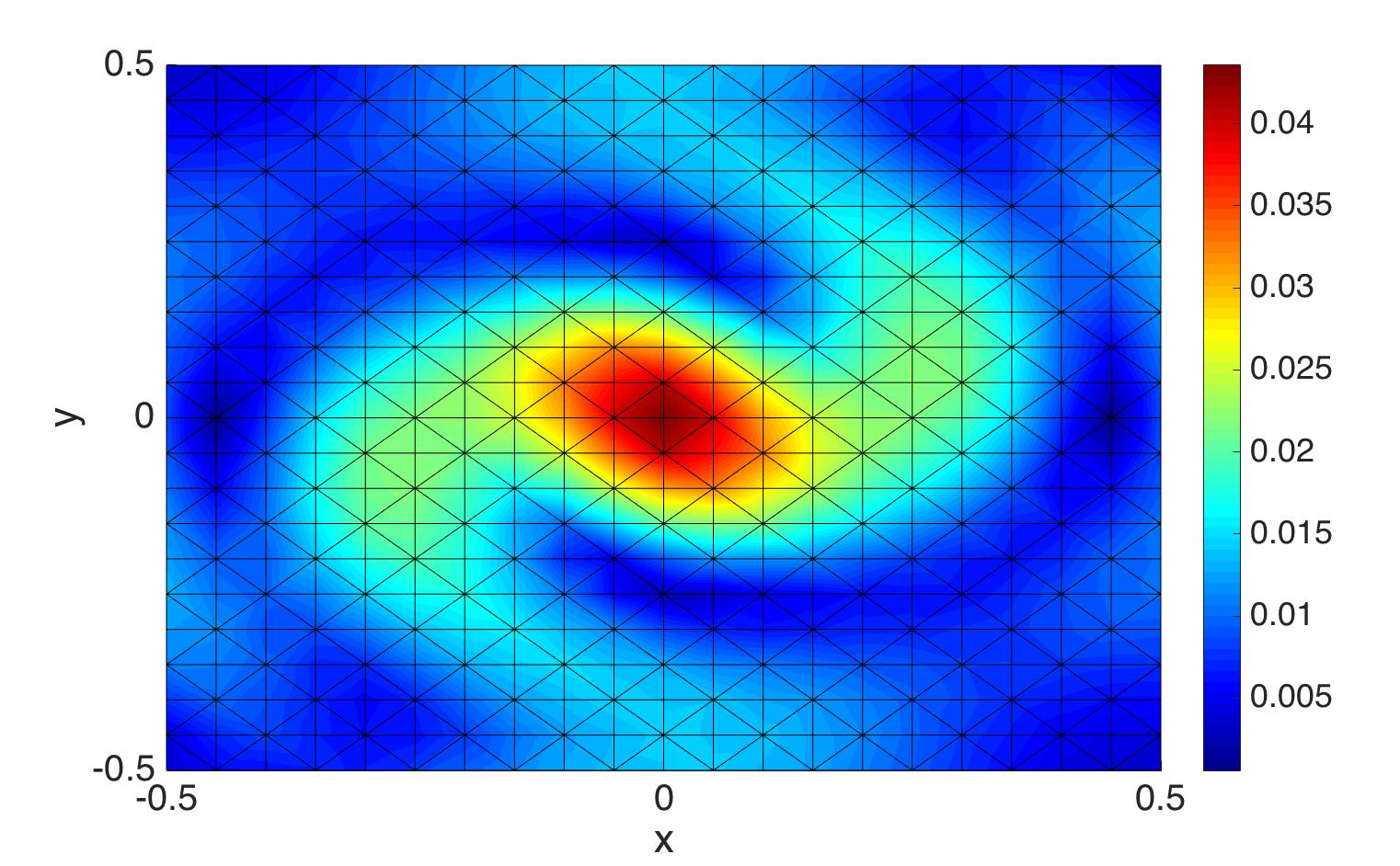} 
\includegraphics[width=2.2in,height=1.8in]{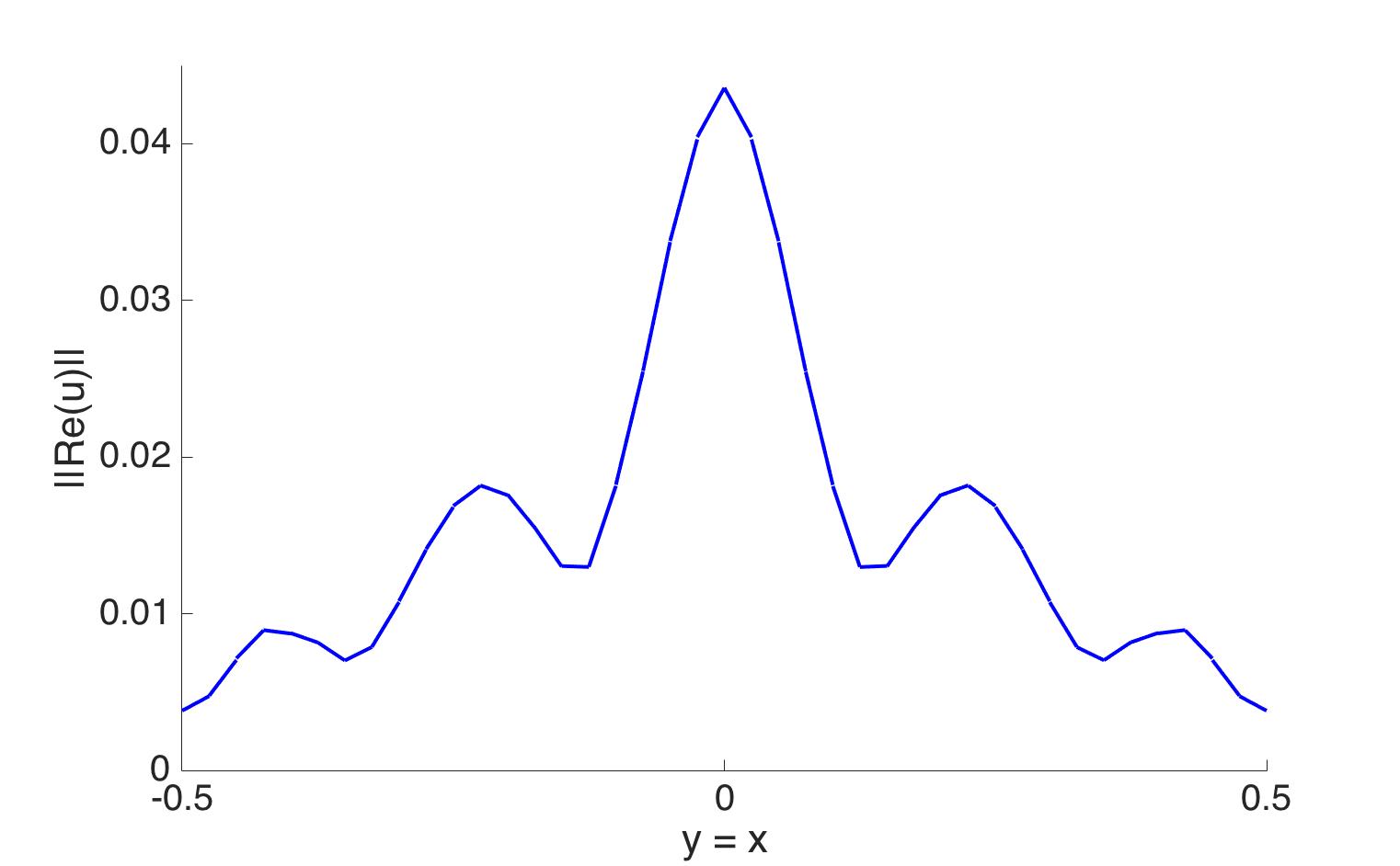}}
\caption{Plot of the statistical average $\Re \big(\Psi^h_7 \big)$ on the 
domain $[-0.5,0.5] \times [-0.5,0.5]$ (left) and on the cross section $y = x$ 
(right) for $k = 10$, $h = 1/20$, $\veps = 0.05$, and $M=1000$.}  
\label{fig:Fig2a}
\end{figure}

\begin{figure}[htb]
\centerline{\includegraphics[width=2.2in,height=1.8in]{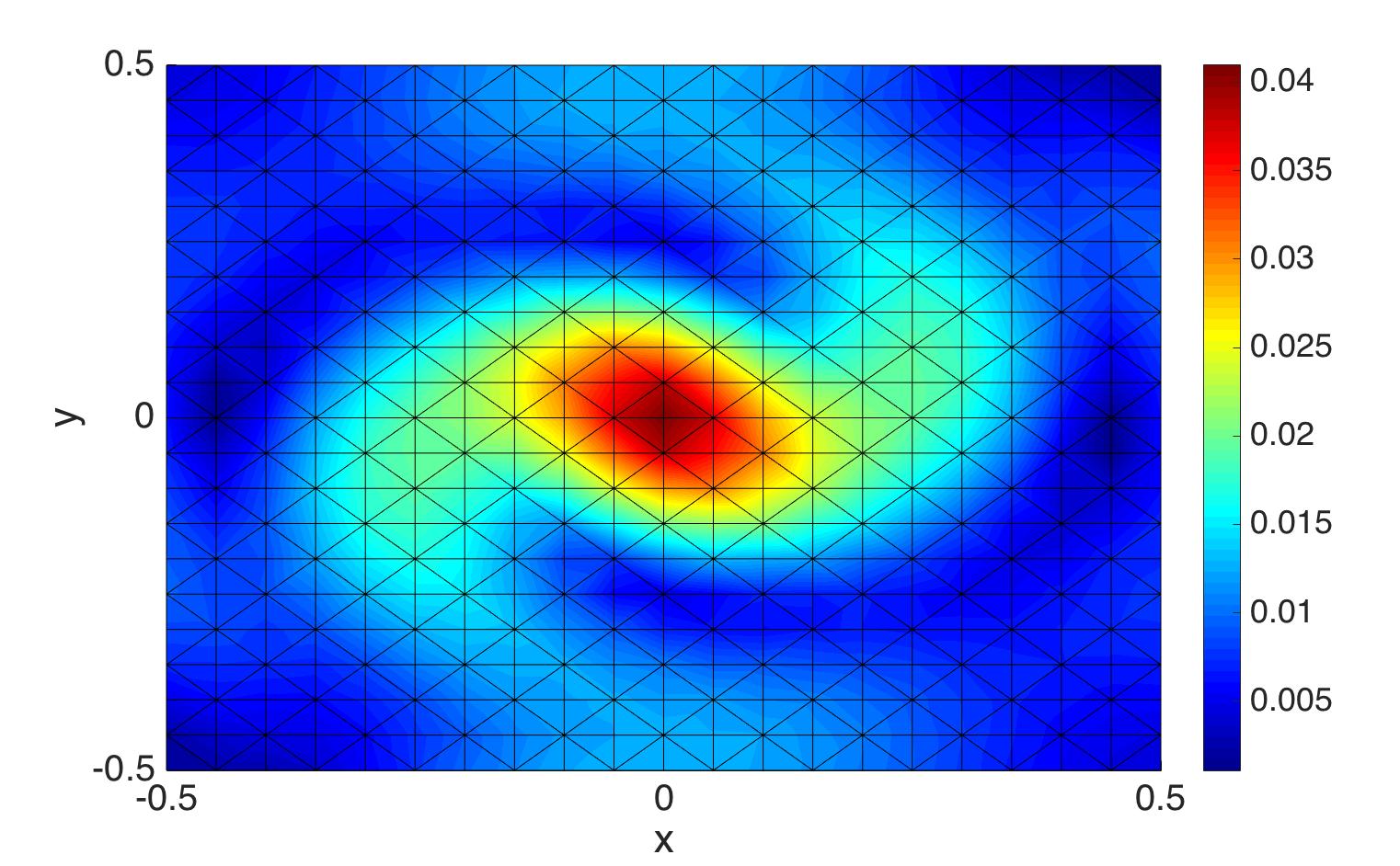} 
\includegraphics[width=2.2in,height=1.8in]{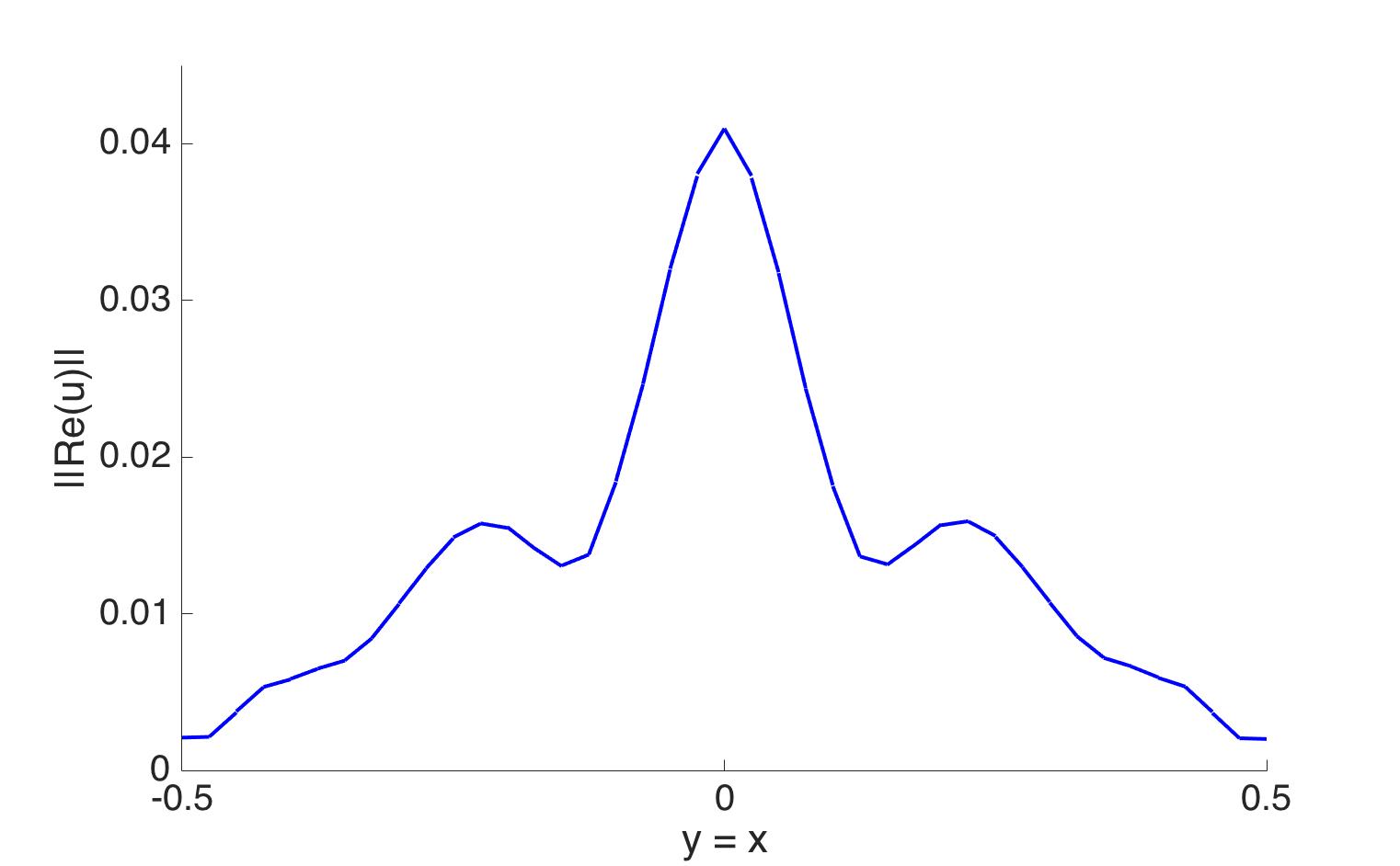}}
\caption{Plot of the sample realization $\Re \big(\bfU^h_7 \big)$ on the domain 
$[-0.5,0.5] \times [-0.5,0.5]$ (left) and on the cross section $y = x$ (right) 
for $k = 10$, $h = 1/20$, $\veps = 0.05$, and $M=1000$.}  
\label{fig:Fig2b}
\end{figure}

\begin{figure}[htb]
\centerline{\includegraphics[width=2.2in,height=1.8in]{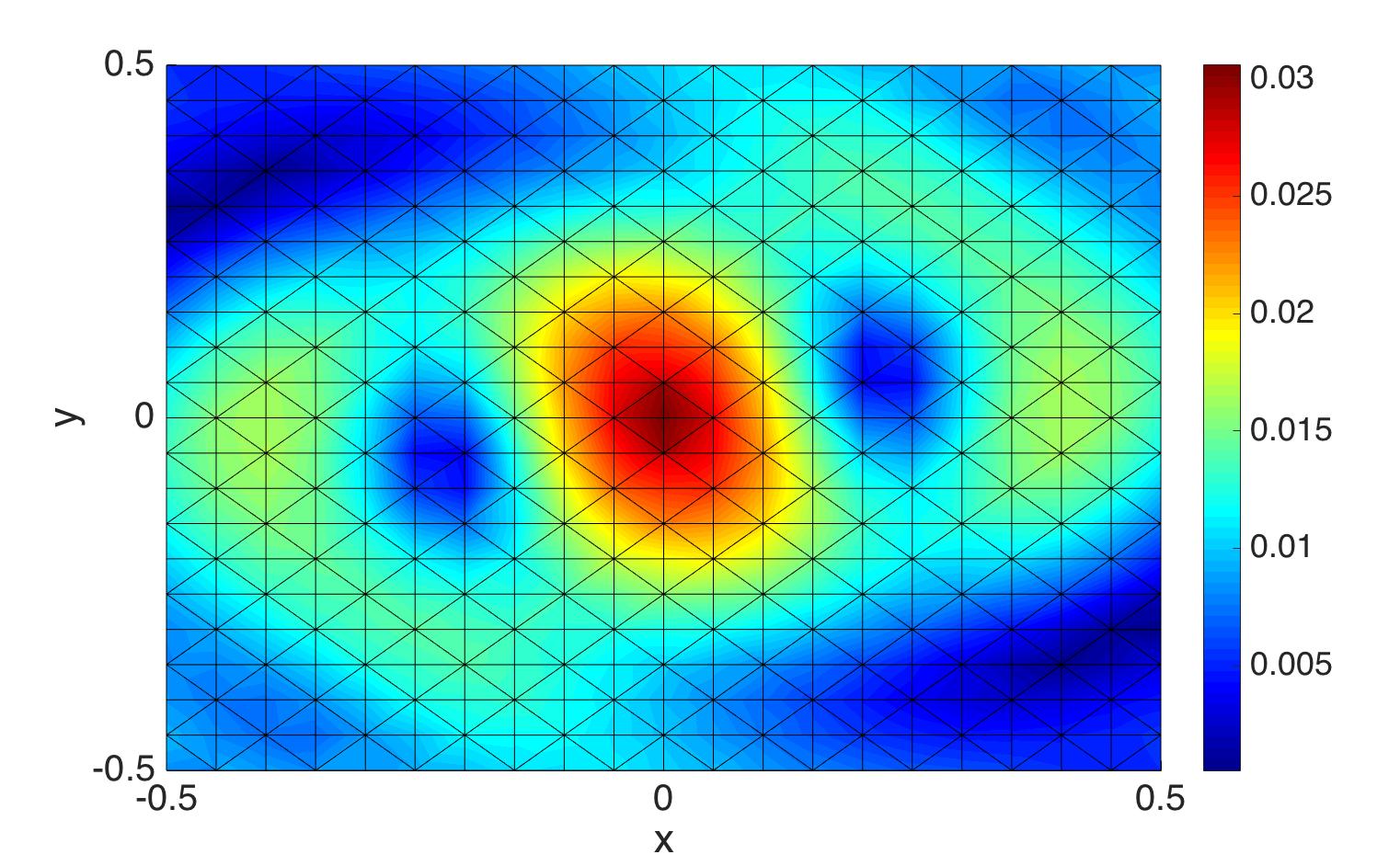} 
\includegraphics[width=2.2in,height=1.8in]{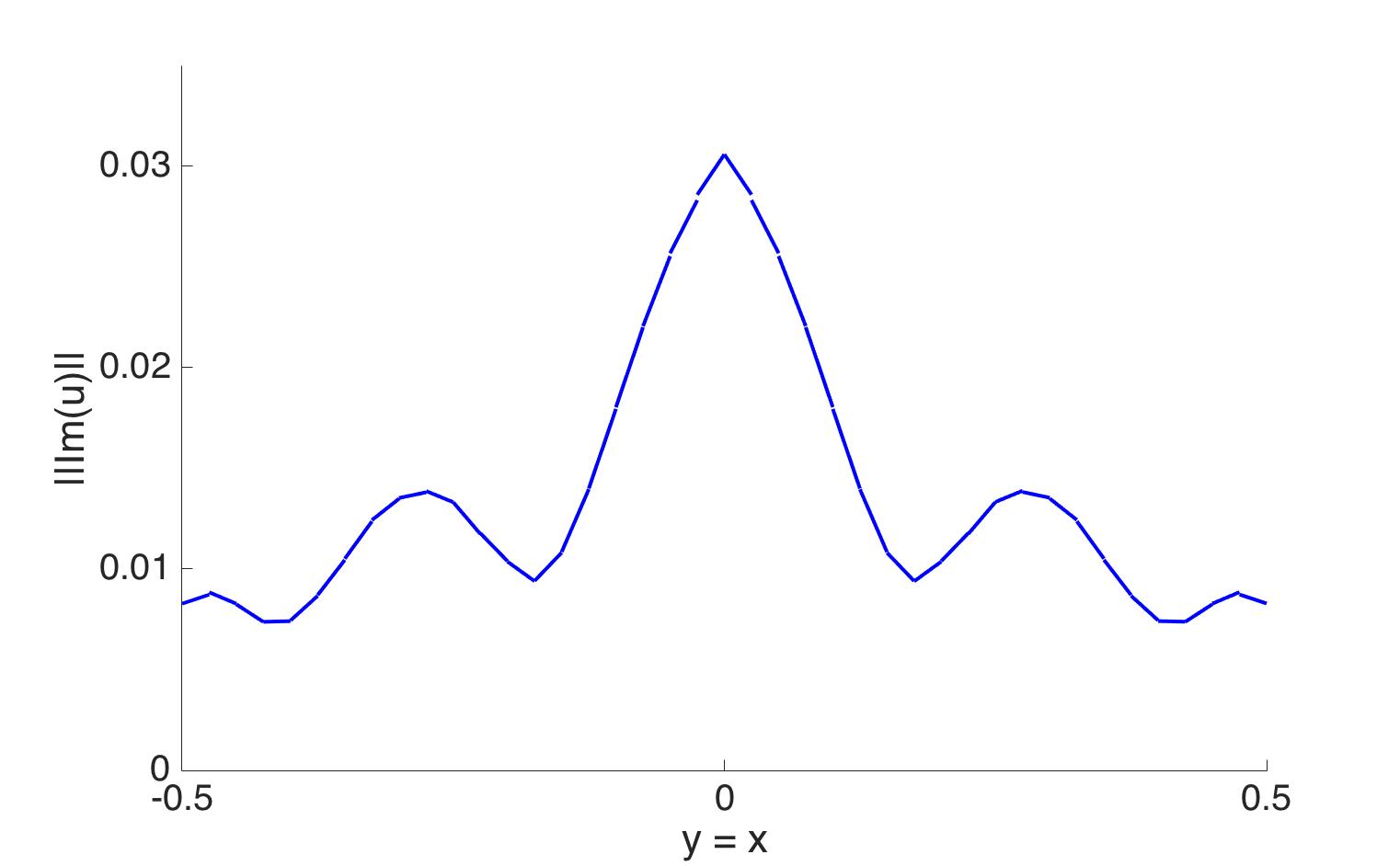}}
\caption{Plot of the statistical average $\Im \big(\Psi^h_7 \big)$ on the 
domain $[-0.5,0.5] \times [-0.5,0.5]$ (left) and on the cross section $y = x$ 
(right) for $k = 10$, $h = 1/20$, $\veps = 0.05$, and $M=1000$.}  
\label{fig:Fig2c}
\end{figure}

\begin{figure}[htb]
\centerline{\includegraphics[width=2.2in,height=1.8in]{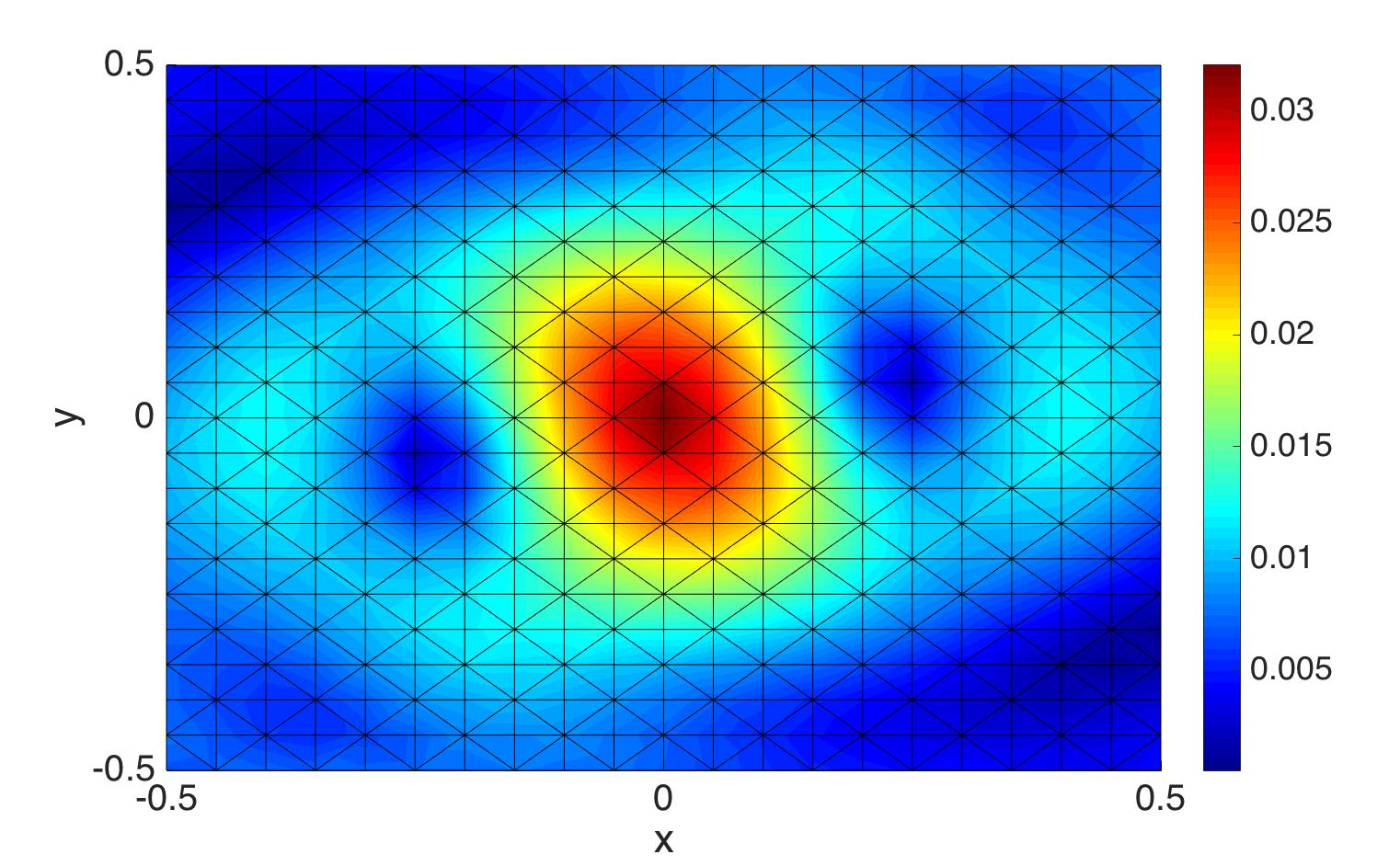} 
\includegraphics[width=2.2in,height=1.8in]{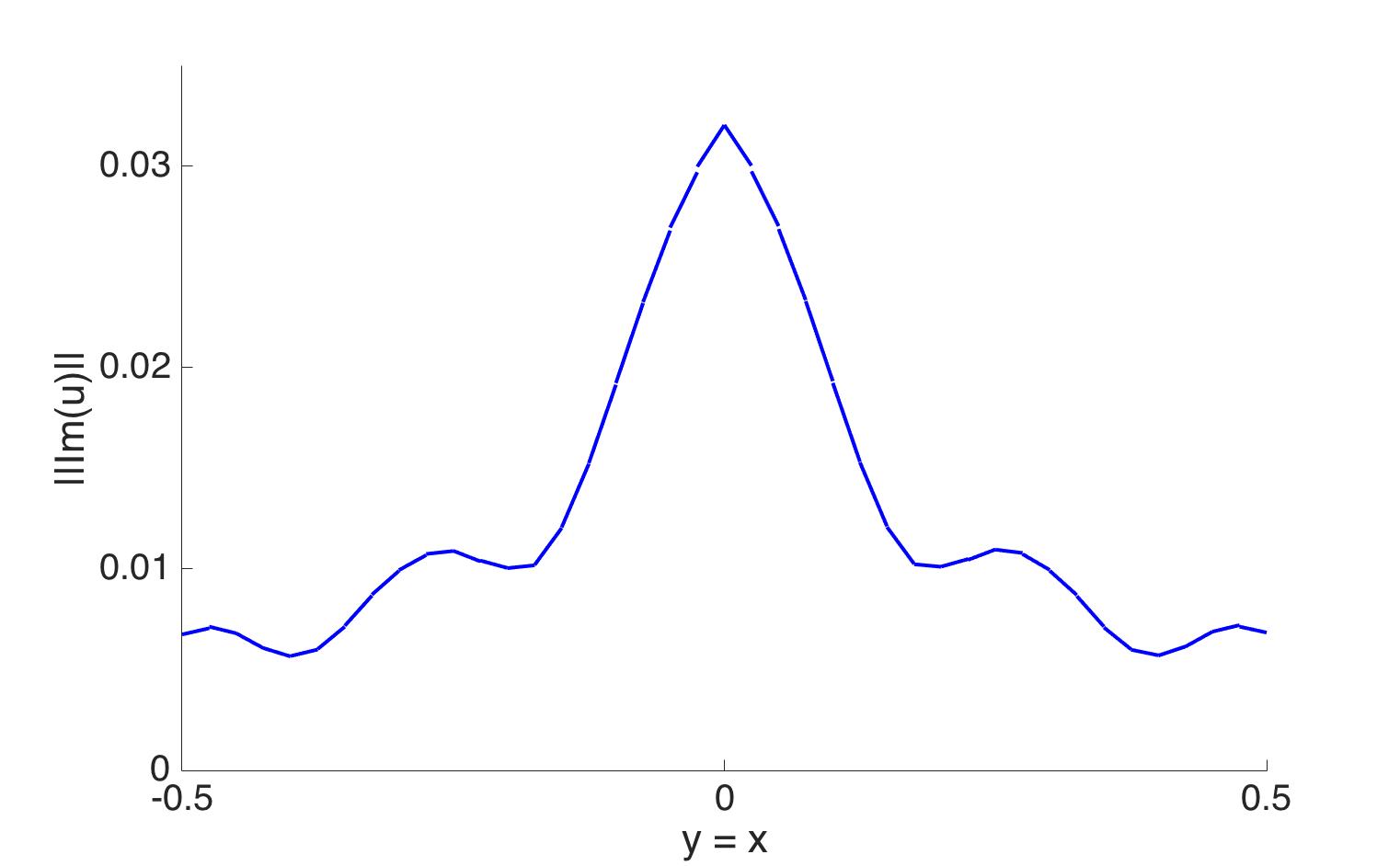}}
\caption{Plot of the sample realization $\Im \big(\bfU^h_7 \big)$ on the domain 
$[-0.5,0.5] \times [-0.5,0.5]$ (left) and on the cross section $y = x$ (right) 
for $k = 10$, $h = 1/20$, $\veps = 0.05$, and $M=1000$.}  
\label{fig:Fig2d}
\end{figure}

%
%
%
%

%

\section{Extension to more general random media}\label{sec-7}

The multi-modes Monte Carlo IP-DG method we developed above is applicable only to
weakly random media in the sense that the coefficient $\alpha$ in the SPDE system must have  
the form $\alpha(\omega,x)=\alpha_0(x) +\veps\eta(\omega,x)$ and $\veps$ is not large.
For more general random media, its density $\rho$ or the coefficient
$\alpha=\sqrt{\rho}$ may not have the required ``weak form". A natural question 
is whether and how the above multi-modes Monte Carlo IP-DG  method can be extended 
to cover more general and non-weak random media. A short answer to this question is positive.
To this end, our  main idea for overcoming this difficulty is first to rewrite 
$\alpha(x,\omega)$ as the required form $\alpha_0(x)+\veps \eta(\omega,x)$, 
then to apply the above weakly random media framework.
There are at least two approaches to do such a re-writing, the first one is to utilize the
well-known Karhunen-Lo\`eve expansion and the second is to use
a recently developed stochastic homogenization theory \cite{DGO}. Since the second approach is
more involved and lengthy to describe, below we only outline the first approach.

For many geoscience and material science applications, the random media can be 
described by a Gaussian random field \cite{Fouque_Garnier_Papanicolaou_Solna_07, Ishimaru_97, Lord_Powell_Shardlow}.
Let $\overline{\alpha}(x)$ and $C(x,y)$ denote the mean and covariance
function of the Gaussian random field $\alpha(\ome,x)$, respectively. Two of the most widely 
used covariance functions in geoscience and materials
science are $C(x,y)=\exp(|x-y|^m/\ell)$ for $m=1,2$ and $0<\ell<1$
(cf. \cite[Chapter 7]{Lord_Powell_Shardlow}. Here $\ell$ is often called correlation length which
determines the range of the noise.  The well-known Karhunen-Lo\`eve expansion
for $\alpha(\ome,x)$ takes the following form (cf. \cite{Lord_Powell_Shardlow}):
\[
\alpha(\omega,x )= \overline{\alpha}(x) + \sum_{k=1}^\infty \sqrt{\lambda_k} \phi_k(x) \xi_k(\omega),
\]
where $\{(\lambda_k, \phi_k)\}_{k\geq 1}$ is the eigenset of the (self-adjoint) covariance 
operator and $\{\xi_k  \sim  N(0,1) \}_{k\geq 1}$ are i.i.d. random variables. It can be 
shown that in many cases there holds $\lambda_k=O(\ell^r)$ for some $r>1$ depending on 
the spatial domain $D$ where the PDE is defined (cf. \cite[Chapter 7]{Lord_Powell_Shardlow}),
that is the case when $D$ is rectangular.  Consequently, we can write
\[
\alpha(\omega,x)=\overline{\alpha}(x)+ \sqrt{\lambda_1} \zeta(x,\omega), \qquad 
\zeta(x,\omega):= \sum_{k=1}^\infty\sqrt{ \frac{\lambda_k}{\lambda_1} }\, \phi_k(x) \xi_k(\omega),
\]
Thus, setting $\varepsilon=O(\ell^{\frac{r}{2}})$ gives rise to 
$\alpha(\ome,x)=\overline{\alpha} + \varepsilon \zeta$,
which is the required ``weak form" consisting of a deterministic background
field plus a small random perturbation.  So we just showed that in many cases a 
given random field $\alpha$ can be rewritten into the required ``weak form".
Therefore, our multi-modes Monte Carlo IP-DG method can still be applied to
such general random media.

It should be pointed out that the classical Karhunen-Lo\`eve expansion may be replaced by other
types of expansion formulas which may result in more efficient multi-modes Monte Carlo methods.
Finally, we also remark that the IP-DG method can be replaced by
any other space discretization method such as finite difference, finite element, and
spectral method in Algorithm 2.


\bibliographystyle{plain}
\bibliography{bibliography}

\end{document}